\def\setliststart#1{\setcounter{\@listctr}{#1}%
  \addtocounter{\@listctr}{-1}}
 \newtheorem{The}{Theorem}[section]
 \newtheorem{Lem}[The]{Lemma}
 \newtheorem{Pro}[The]{Proposition}
 \theoremstyle{definition}
 \newtheorem{ex}[The]{Example}
 \numberwithin{equation}{section}
\newcounter{Mr}
\newtheorem{Result}[Mr]{\textbf{Main Result}} 
\newcommand{\T}{\mathbb{T}}
\newcommand{\R}{\mathbb{R}}
\newcommand{\N}{\mathbb{N}}
\def\erf{\eqref}
\def\Lip{\operatorname{Lip}}
\def\du#1{\langle#1\rangle}
\def\tim{\times} \def\cS{\mathcal{S^-}}\def\mid{\,:\,}
\def\cA{\mathcal{A}}
\def\leq{\leqslant}\def\geq{\geqslant}
\def\gl{\lambda} \def\frM{\mathfrak{M}} \def\bproof{\begin{proof}} 
\def\eproof{\end{proof}}\def\ep{\varepsilon}\def\gd{\delta}\def\dist{\operatorname{dist}}
\def\cP{\mathcal{P}}\def\bald{\begin{aligned}}
\def\eald{\end{aligned}}\def\beq{\begin{equation}} \def\eeq{\end{equation}}
\def\pl{\partial}
\def\gs{\sigma}\def\gf{\varphi} \def\ol{\overline} 
\def\fr{\frac} \def\gth{\theta}
\def\stm{\setminus}
\def\Gth{\Theta}\def\gk{\kappa}\def\ga{\alpha}
\title{Vanishing contact structure problem and convergence of the viscosity solutions}
\author{Qinbo Chen \and Wei Cheng \and  Hitoshi Ishii \and Kai Zhao}
\address{Morningside center of Mathematics, Academy of Mathematics and Systems Science, Chinese Academy of Sciences, Beijing 100190, China}
\email{chenqb@amss.ac.cn}
\address{Department of Mathematics, Nanjing University, Nanjing 210093, China}
\email{chengwei@nju.edu.cn}
\address{Institute for Mathematics and Computer Science, Tsuda University 2-1-1 Tsuda, Kodaira, Tokyo 187-8577, Japan}
\email{hitoshi.ishii@waseda.jp}
\address{Department of Mathematics, Nanjing University, Nanjing 210093, China}
\email{15251879427@163.com}
\date{\today}
\subjclass[2010]{35F21, 49L25, 37J50}
\keywords{Hamilton-Jacobi equation, convergence, vanishing discount, vanishing contact structure, Weak KAM theory, Aubry-Mather theory}
\begin{document}

\begin{abstract}
This paper is devoted to study the vanishing contact structure problem which is a generalization of the vanishing discount problem. Let $H^\lambda(x,p,u)$ be a family of  Hamiltonians of contact type with parameter $\lambda>0$ and  converges to $G(x,p)$. For the contact type Hamilton-Jacobi equation with respect to $H^\lambda$, we prove that, under mild assumptions, the associated viscosity solution $u^{\lambda}$ converges to a specific viscosity solution $u^0$ of the vanished contact equation. As applications, we  give some convergence results for the nonlinear vanishing discount problem.
\end{abstract}
\maketitle

\section{Introduction and main results}\label{section_Int}
Let $M$ be a connected and compact smooth manifold without boundary, equipped with a smooth Riemannian metric $g$, the associated Riemannian distance on $M$ will be denoted by $d$. Let $TM$ and $T^*M$ denote the tangent and cotangent bundles respectively. A point of $TM$ will be denoted by $(x,v)$ with $x\in M$ and $v\in T_xM$, and a point of $T^*M$ by $(x, p)$ with $p\in  T_x^*M$ is a linear form on the vector space $T_xM$.   With a slight abuse of notation, we shall both denote by $|\cdot|_x$ the norm induced by $g$ on the fiber $T_xM$ and also the dual norm on $T_x^*M$. Let   $H^\lambda(x,p,u):T^*M\times\R\to\R$ be  a family of  Hamiltonians with parameter $\lambda\in(0,1]$ and  $G:T^*M\to\R$ be a Hamiltonian such that $H^\lambda$  uniformly converges, as $\lambda\to 0$, to $G$ on any compact subsets.  

Consider the contact type Hamilton-Jacobi equation 
\begin{equation}\label{Int_lambda}
	H^\lambda\big(x,Du(x),u(x)\big)=c,  \quad x\in M.
\end{equation}
This equation, from the view of physics,  naturally arises in contact Hamiltonian mechanics (see for instance \cite{Bravetti2017, Bravetti_Cruz_Tapias2017}).  Systematic discussions of the  contact transformations and Hamilton-Jacobi equations can be found in  \cite{Giaquinta_Hildebrandt_book_II}.
 Since  $H^\lambda$ uniformly converges to $G$,  a natural and important question is the convergence of viscosity solutions  of \eqref{Int_lambda}, that is, whether or not the whole family $\{u^\lambda\}_\lambda$  of viscosity  solutions uniformly converges, as $\lambda\to 0$, to a unique function. By the stability property of viscosity solution, it is known that the limits of $\{u^\lambda\}_\lambda$ must be viscosity solutions of 
\begin{equation}\label{Int_0}
	G\big(x,Du(x)\big)=c,  \quad x\in M.
\end{equation}

An immediate comment to be made is that, as is well known,
there is only one special \emph{critical value} $c=c(G)$ such that \eqref{Int_0} admits a  viscosity solution.  It is thus natural to take the value $c=c(G)$ in  equations \eqref{Int_lambda} and \eqref{Int_0}. This leads us to consider  the following contact type Hamilton-Jacobi equations
\begin{equation}\label{eq_H_lambda}\tag{HJ$_\lambda$}
H^\lambda\big(x,Du(x),u(x)\big)=c(G),\quad x\in M.
\end{equation}
and 
\begin{equation}\label{eq_G}\tag{HJ$_0$}
G\big(x,Du(x)\big)=c(G),\quad x\in M.
\end{equation}

Our main goal in this paper is to study the \emph{vanishing contact structure problem}, which mainly 
focuses on the asymptotic behavior of the whole family $\{u^\lambda\}_\lambda$ of solutions of \eqref{eq_H_lambda} as $\lambda$ goes to zero, and the characterization of the possible limits. 
This problem is a natural generalization of the \emph{vanishing discount problem} 
where $H^\lambda$ is a linear discounted  system $H^\lambda(x,p,u)=\lambda u+G(x,p)$. 
The vanishing discount problem is also called \emph{ergodic approximation} in PDE, 
which was first studied in a general framework by Lions, Papanicolaou, and Varadhan in \cite{Lions_Papanicolaou_Varadhan1987} 
and  has been widely studied since then. Recently, great progress has been made 
in the vanishing discount problem under various type of settings and methods, see  \cite{Gomes2008, Iturriaga_Sanchez-Morgado2011, DFIZ2016, DFIZ_Math_Z_2016, 
AAIY2016, Soga2017, Mitake_Tran2017, Ishii_Mitake_Tran2017_1, Ishii_Mitake_Tran2017_2, Gomes_Mitake_Tran2018}, etc, especially the results in \cite{DFIZ2016}  where  the authors first prove a convergence result under very mild conditions, and  characterize the unique limit in terms of Peierls barrier and projected Mather measures from a dynamical viewpoint.

However, the vanishing contact structure problem has not been fully studied yet, as far as we know.  The difficulties are that the  general $H^{\lambda}(x, p, u)$ might not be linear in $u$ as discounted case, and  hence one can not directly obtain a convenient explicit representation formula for the solution $u^\lambda$. In recent works \cite{Su_Wang_Yan2016, Wang_Wang_Yan2017, Wang_Wang_Yan2018}, certain implicit variational principle is applied to give representation formula for the viscosity solutions or weak KAM solutions of \eqref{eq_H_lambda}. An alternative approach following Herglotz' generalized variational principle is also obtained  from the Lagrangian formalism  \cite{CCWY2018}, which is later used to obtain a vanishing contact structure result on relavent Cauchy problem for evolutionary equations \cite{Zhao_Cheng2018}. But, these results are established under $C^2$ Tonelli conditions, so they are no longer applicable  to our settings in this paper. Therefore, in this paper, we will develop some new techniques, under suitable assumptions, to handle the vanishing contact structure problem. 

\medskip
Now we begin to state the main assumptions here. Suppose that  
\begin{enumerate}[\rm(SH1)]
    \item $H^\lambda\in C(T^*M\times\R)$ and  $H^\lambda(x,p,0)=G(x, p)$ for each $\lambda$.
	\item The map $p\mapsto G(x,p)$ is convex on $T^*_xM$, for every $x\in M$. 
	\item $G(x,p)$ is coercive in the fibers, i.e.  $\lim_{|p|_x\to +\infty} G (x, p)=+\infty$, uniformly for all $x\in M.$
	\item The map $u\mapsto H^\lambda(x,p, u)$ is strictly increasing, and  for any $r>0$, there exists  a constant $\kappa_r^\lambda>0$ such that $\lim_{\lambda\to 0}\kappa_r^\lambda=0$ and
	 \begin{align}\label{assumption_lipschitz}
	 	  |H^\lambda(x, p, u) -H^\lambda(x, p, 0) | \leqslant \kappa_r^\lambda |u|, \quad \text{for all~}  |u|\leqslant r. 
\end{align}
	\item	There exist positive constants  $\delta_R^\lambda\leqslant  K^\lambda_R$ depending on $R$ and $\lambda$ such that
	 \begin{align*}
	 	 \delta_R^\lambda |u| \leqslant |H^\lambda(x, p, u) -H^\lambda(x, p, 0) | \leqslant K_R^\lambda |u| \quad \text{for all~} |p|_x \leqslant R, |u|\leqslant R 
\end{align*}
and for a suitably large  $R_0$,
\begin{align}\label{steady_increasing}
	 \lim_{\lambda\to 0}\frac{\delta^\lambda_{R_0}}{K^\lambda_{R_0}}=1.
\end{align}
\end{enumerate}
Under hypotheses (SH1) and (SH4), it is clear that $H^\lambda$ uniformly converges, as $\lambda\to 0$, to $G$ on any compact subsets of $T^*M\times\R$.  

Notice that assumptions (SH4) and (SH5) together implies that $\delta_R^\lambda\leq \kappa_R^\lambda$ and $\lim_{\lambda\to 0}\delta_{R_0}^\lambda=
\lim_{\lambda\to 0}K_{R_0}^\lambda=0$.

Here we give some notes and remarks on  our assumptions: throughout this paper the Hamiltonians are  required to be only continuous, in such a setting, of course, no Hamiltonian dynamics can necessarily be defined in the usual sense. $H^\lambda(x,p, 0)\equiv G(x, p)$ in assumption (SH1) is necessary because otherwise, the convergence result might not hold, see Example \ref{ex_nonuni}.  
The coercivity in assumption (SH3) is used to prove that each subsolution of \eqref{eq_H_lambda} should be Lipschitz continuous. The strict monotonicity assumption (SH4), as is known to all, guarantees the uniqueness of Lipschitz continuous viscosity solution of \eqref{eq_H_lambda}, and the local Lipschitz continuity \eqref{assumption_lipschitz} is used to give uniform estimates for the whole family of solutions $\{u^\lambda\}_\lambda$. 
As for assumption (SH5), the constant $R_0$ in \eqref{steady_increasing} will be specified in  the proof of Theorem \ref{The_uni_sol}, this hypothesis might seem too strict at first sight, however, in the sequel (see Main Results 2 and 3 below) we will show that besides the well known discounted system, it is really satisfied by a large class of models. 
Last but not least, to characterize the limit solution more precisely, we make a crucial use of  the convexity assumption (SH2) and the coercivity assumption (SH3) to apply  Aubry-Mather theory.

We denote by $\mathfrak{M}(L_G)$  the set of all projected Mather measures with respect to $L_G$ (see 
Section \ref{section_pre} for the precise definition), 
and by $\mathcal{F}(G)$  the set of all viscosity subsolutions $w$ of \eqref{eq_G} satisfying
\begin{align*}
	\int_{M} w(x)d\nu(x)\leqslant 0,\quad \forall~\nu\in\mathfrak{M}(L_G).
\end{align*}
Then we address the main results in this paper:
\begin{Result}
Let $\{H^\lambda\}_{\lambda\in(0,1]}$  satisfy \mbox{\rm{(SH1)--(SH5)}}. Then  equation \eqref{eq_H_lambda} has a  unique continuous viscosity solution $u^\lambda$ which is also Lipschitzian, and 
the convergence  
\begin{equation}\label{result1_limit}
	u^0(x)=\lim_{\lambda\to 0}u^\lambda(x) \quad \text{uniformly for all~} x\in M
\end{equation} 
holds for some function $u^0\in\Lip(M)$.  Furthermore, the limit function $u^0$
is a viscosity solution of \eqref{eq_G} and is characterized by formula 

\begin{equation}\label{representations}
	u^0(x)=\sup\limits_{u\in\mathcal{F}(G)} u(x)=\min\limits_{\mu\in \mathfrak{M}(L_G)}\int_{M}h(y,x)\ d\mu(y),
\end{equation}
where  $h(y,x)$ is the Peierls barrier of the Lagrangian $L_G$.
\end{Result}

\medskip
It should be noted that unlike \cite{DFIZ2016}, 
we do not use the superlinearity assumption of  $G$ in the fibers, to obtain the representation formula \eqref{representations}. 
In this respect, we remark that the techniques for dealing with  
lower semicontinuous Lagrangians in \cite{AAIY2016} allows us to 
conclude \eqref{representations}.    

Our assumptions (SH1)--(SH5) 
are true of many models. In the sequel, as applications, we  
give some generalizations of discounted systems which are nonlinear in $u$, and naturally satisfy the aforementioned assumptions. Some of the readers might more interested in the following one:

\medskip

\noindent\emph{Application I:} 
We consider  a direct  generalization of the discounted equations. Suppose that 
\begin{enumerate}[\rm(C1)] 
\item   $H\in C(T^*M)$ is convex and coercive in the fibers.
\item  $f(x, u)\in C^1(M\times\R)$ with  $f_u>0$ and 
\begin{equation*}
	\limsup_{|u|\to 0}\frac{|f_u(x,u)-f_u(x,0)|}{|u|}<+\infty\quad \text{uniformly for~}  x\in M.
\end{equation*}
\end{enumerate} 
Let $c=c(G)$ be the critical value of the Hamiltonian $G(x,p):=f(x,0)+H(x,p).$
Now for each $\lambda>0$, we consider the following Hamilton-Jacobi equations:
\begin{equation}\label{main3_lambda}
f(x,\lambda u)+H(x,Du)=c, \quad~x\in M.
\end{equation}
 and 
\begin{equation}\label{main3_0}
 f(x,0)+H(x,Du)=c, \quad~x\in M.
 \end{equation} 
\begin{Result}
Under the above assumptions \mbox{\rm(C1)--(C2)},  equation \eqref{main3_lambda} has a unique continuous  viscosity solution  $u^\lambda$, which is also Lipschitz continuous, 
and, for some $u^0\in \Lip(M)$, which is indeed a viscosity solution
 of \eqref{main3_0},  the following convergence holds: 
\begin{equation*}
	u^0(x)=\lim_{\lambda\to 0}u^\lambda(x)\quad \text{uniformly for all~} x\in M. 
\end{equation*} 
Moreover, $u^0$ is characterized by 
	\begin{align*}
		u^0(x)=\sup\limits_{w\in\mathcal{FR}(G)} w(x)=\min\limits_{\mu\in \mathfrak{M}(G)}\frac{\int_{M}f_u(y,0)h(y,x)\ d\mu(y)}{\int_{M}f_u(y,0)\ d\mu(y)},
	\end{align*}   
	where $h(y,x)$ is the Peierls barrier of the Lagrangian $L_G$ and $\mathcal{FR}(G)$ is the set of  viscosity subsolutions $w$ of \eqref{main3_0} satisfying $\int_{M} f_u(y,0)w(y)d\mu(y)\leqslant 0$ for all $\mu\in\mathfrak{M}(L_G)$.  
\end{Result}

Similar discussions and convergence results of \eqref{main3_lambda} has also been obtained in \cite[Theorem 2.1]{Gomes_Mitake_Tran2018} by using the nonlinear adjoint method. 

However, compared with their result and proof, our assumption are milder and, moreover,  
we deal with the problem by a single application of a comparison theorem (Theorem \ref{The_comp_prin} 
below) combined with now a classical convergence result by Davini et al. \cite{DFIZ2016} (see Theorem \ref{DFIZ_result} below).

\medskip

\noindent \emph{Application II:} 
Now we study a more general class of systems, let $H=H(x,p,u):T^*M\times\R\to \R$ and  $G(x,p):=H(x,p,0)$ be the   Hamiltonian with $c=c(G)$  the critical value of $G$. For each $\lambda>0$, we consider the equations
\begin{equation}\label{gen_eq}
	H(x, Du(x), \lambda u(x))=c,\quad x\in M.
\end{equation}
and 
\begin{equation}\label{gen_crit_eq}
	H(x, Du(x),0)=c,\quad x\in M.
\end{equation}
 Then we can prove
\begin{Result} Suppose that
\begin{enumerate}[\rm(a)]
    \item  $H\in C(T^*M\times\R)$, the partial derivative $H_u\in C(T^*M)$ with  $H_u>0$. For any $R>0$, there exists  $B_R>0$ so that
\begin{equation*}
	|H(x,p,u)-H(x,p,0)|\leqslant B_R|u|\quad \text{for all~} |u|\leqslant R
\end{equation*}
 and
    \begin{equation*}
    	|H_u(x,p,u)-H_u(x,p,0)|\leqslant B_R|u| \quad\text{for all~} |p|_x\leqslant R, |u|\leqslant R\,;
    \end{equation*}
    \item $G(x,p)$ and $\frac{G(x,p)-c}{H_u(x,p,0)}$ is convex and coercive in the fibers.    
 \end{enumerate} 
Then  equation \eqref{gen_eq} has a unique continuous viscosity solution $u^\lambda$, which is also Lipschitzian,  
and, moreover, for some Lipschitz viscosity solution $u^0$ of \eqref{gen_crit_eq}, 
\begin{equation*}
	u^0(x)=\lim_{\lambda\to 0}u^\lambda(x) \quad \text{uniformly for all~} x\in M. 
\end{equation*} 
Furthermore, $u^0$ is represented as 
\begin{equation}
	u^0(x)=\sup\limits_{u\in\mathcal{F}(\breve{G})} u(x)=\min\limits_{\mu_*\in \mathfrak{M}(L_{\breve{G}})}\int_{M}\breve{h}(y,x)\ d\mu_*(y).
\end{equation}
where  $\breve{h}(y,x)$ is the Peierls barrier of the Lagrangian $L_{\breve{G}}$ associated with the Hamiltonian 
$\breve{G}(x,p)$ $= \frac{G(x,p)-c}{H_u(x,p,0)}.$	
\end{Result}

The  paper is organized as follows. Section \ref{section_pre} provides basic terminologies and notations which are necessary for the understanding of  our subsequent work, and we collect some necessary results 
in Aubry-Mather theory and weak KAM theory under a non-smooth setting and without superlinearity assumption. In Section \ref{section_existence}, we establish the existence and uniqueness of  viscosity solution  for  equation \eqref{eq_H_lambda}, based on Perron's construction and the comparison principle. 
Section \ref{section_main} is the main part of the present paper which consists of the discussion, statement and proof of  Main Result 1. 
Section \ref{section_app} provides some applications and gives the proof of  Main Results 
2 and 3,  which strongly rely on the nonlinear analysis of our models. Finally, in Section \ref{sec6}, for the reader's convenience, we give the detailed proof for Theorem \ref{DFIZ_result} which is crucial in this paper. Appendices A and B  examine briefly the validity of the variational or optimal control formula 
for the solutions of the Cauchy problem for Hamilton-Jacobi equations as well as the existence 
of solutions and the comparison principle. The role of the appendices are to bridge a gap, at least in the literature, in the basic theory of the Hamilton-Jacobi equations with  
Lagrangians having possibly the value $+\infty$.  
\medskip

\noindent\textbf{Acknowledgements.} Qinbo Chen is partially supported by National Natural Science Foundation of China (Grant No. 11631006). Wei Cheng is partially supported by National Natural Science Foundation of China (Grant No. 11871267, No. 11631006 and No. 11790272). Hitoshi Ishii is partially supported by Grant-in-Aid for Scientific Research Grant ((B) No. 16H03948, (S) No. 26220702, (B) No. 18H00833), and he would like to
thank the Department of Mathematics, Nanjing University for its hospitality.

\section{Preliminaries}\label{section_pre}

In this section, we provide  some useful results from Aubry-Mather theory and weak KAM  theory which are necessary for the purpose of this paper. 
Aubry-Mather theory  are classical and  well known for $C^2$ Hamiltonians or Lagrangians satisfying Tonelli conditions, we refer the reader to Mather's original papers \cite{Mather1991, Mather1993} and Ma\~n\'e's papers \cite{Mane1996, Mane1997}.  For a complete introduction to the weak KAM theory under Tonelli settings, we refer the reader to Fathi's book \cite{Fathi_book}. An analogue of Aubry-Mather theory or weak KAM theory has been developed for contact Hamiltonian, see e.g \cite{Maro_Sorrentino2017, Mitake_Kohei2018, Wang_Wang_Yan2018a, Wang_Wang_Yan2018b} . 

However, in this paper our systems are only required to be continuous and are lack of Hamiltonian or Lagrangian dynamics. So we  need the generalizations of Aubry-Mather theory and weak KAM theory for non-smooth systems, the main references are  \cite{Fathi_Siconolfi2005,Ishii2008,AAIY2016, Davini_Zavidovique2013,DFIZ2016,Ishii2011}.

Throughout this section,  $M$ is a connected and compact manifold without boundary, and $H(x,p):T^*M\to\R$ is assumed to satisfy
\begin{enumerate}[\rm(H1)]
    \item $H\in C(T^*M)$.
    \item For every $x\in M$, the map $p\mapsto H(x, p)$ is  convex in the fiber $T^*_x M$.
	\item  $H(x, p)$ is coercive in the fibers, i.e. $\lim_{|p|_x\to+\infty}H(x,p)=+\infty$ uniformly in  $x\in M.$
\end{enumerate}	  
By the Legendre-Fenchel transformation, we define the Lagrangian associated with $H$ by
 \begin{align*}
	L(x,v)=\sup_{p\in T^*_xM}\{\du{p,v}_{x}-H(x,p)\},\quad  \forall~(x,v)\in TM,
\end{align*}
where $\du{p,v}_{x}$ denotes the value of the linear form $p\in T_x^*M$ evaluated at $v\in T_xM$. 

It is a classical result in convex analysis that if, in addition, $H$ has superlinear growth in the fibers, then 
$L\in C(TM)$. But we do not assume here the superlinearity of $H$ in the fibers, which results in a possibility of 
$L$ taking value $+\infty$.  However, by the definition of $L$, it is clear that $L$ is lower 
semicontinous in $TM$. Moreover, $L$ is bounded in a neighborhood of zero section of 
$TM$ and superlinear in the fibers.  Indeed, we observe that 
\beq\label{L-lower-b}
L(x,v)\geq -H(x,0), 
\eeq
Also, since $p\mapsto H(x,p)$ is convex and coercive and $M$ is compact, 
there exist constants $\gd>0$ and $C_0>0$ such that $H(x,p)\geq \gd |p|_x -C_0$. Hence,
\[
\du{p,v}_x-H(x,p)\leq |p|_x|v|_x-\gd|p|_x+C_0 =(|v|_x-\gd)|p|_x+C_0
\]
and therefore
\beq\label{L-upper-b}
L(x,v)\leq C_0 \ \ \ \text{ if }\ |v|_x\leq \gd. 
\eeq

Furthermore, for any nonzero $v\in T_xM$ and $R>0$, by definition,
\beq\label{L-superlinear}
L(x, v)\geq \max_{|p|_x=R}\langle p, v\rangle_x-\max_{|p|_x=R}H(x,p)=R|v|_x-\max_{|p|_x=R}H(x,p).
\eeq
Since $R>0$ is arbitrary, the Lagrangian $L$ has a superlinear growth in the fibers.

The {\em critical value} $c(H)$ associated with $H$ is defined by 
\begin{equation}
	c(H)=\inf\{c\in\R ~:~ H(x,Du)=c\ \text{~admits a viscosity subsolution}\}. 
\end{equation}
It is well known that  the  critical value is the unique real number $c$  such that the equation
\begin{align*}
	H(x,Du)=c,\quad x\in M,
\end{align*}
admits a global viscosity solution (see  \cite{Fathi_Siconolfi2005}).  
The notion of viscosity solution 
has been introduced by Crandall and Lions \cite{Crandall_Lions1983}.   A function $u: V\to\R$ is a \emph{viscosity subsolution} of $H(x, Du)=c$ on the open subset $V \subset M$ if, for every $C^1$ function $\varphi: V\to\R$, with $\varphi \geqslant u$  and $\varphi(x_0)=u(x_0)$, we have $H(x_0, 
D\varphi(x_0))\leqslant c$. It is a \emph{viscosity supersolution} if, for every $C^1$ function $\psi: V\to\R$, with  $\psi\leqslant u$  and $\psi(x_0)=u(x_0)$, we have $H(x_0, D\psi(x_0)) \geqslant c$. A function $u: V \to\R$ is a \emph{viscosity solution}  if it is both a viscosity subsolution and a viscosity supersolution.

The following result is classical in viscosity solution theory (see \cite[Theorem I.14]{Crandall_Lions1983}, 
\cite[Example 1]{Ishii1987}) and we refer the reader  to  \cite[Theorem 5.2]{Fathi2012} for the case  $M$ is a  compact manifold.
\begin{Pro}\label{fathisublip}
	Let $H: T^*M\to\R$ be coercive in the fibers and $c\in\R$, then any continuous viscosity subsolution of $H(x, Du)=c$ is  Lipschitz continuous. Moreover, the Lipschitz constant is bounded above by $\kappa_c$ with
	$$\kappa_c=\sup\{|p|_x : H(x,p)\leq c\}.$$
\end{Pro}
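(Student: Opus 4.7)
The plan is to show that for each $x_0 \in M$ one has $u(y) - u(x_0) \leq \kappa_c\, d(x_0, y)$ for every $y \in M$; swapping the roles of $x_0$ and $y$ then yields the Lipschitz estimate with constant $\kappa_c$. Coercivity (H3) together with compactness of $M$ makes the sublevel set $\{(x,p)\in T^*M : H(x,p)\leq c\}$ bounded in the fibres, so $\kappa_c < \infty$; by definition of $\kappa_c$, any $p \in T_x^*M$ with $|p|_x > \kappa_c$ satisfies $H(x,p) > c$, and this strict excess is the engine that drives the argument.

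Fix $x_0 \in M$ and a parameter $\eta > 0$ (to be sent to $0^+$ at the end), and consider the continuous function
\[
w(y) := u(y) - (\kappa_c + \eta)\, d(x_0, y),
\]
which attains its maximum on the compact manifold $M$ at some point $y_0$. I would prove $y_0 = x_0$, whence $u(y) - u(x_0) \leq (\kappa_c + \eta)\,d(x_0, y)$ for every $y$, and letting $\eta \to 0^+$ concludes. Assume for contradiction $y_0 \neq x_0$. The idea is to build a $C^1$ upper test function $\varphi$ for $u$ at $y_0$ whose differential has fibre norm exactly $\kappa_c + \eta$: the subsolution property would then give $H(y_0, D\varphi(y_0)) \leq c$, while the definition of $\kappa_c$ gives $H(y_0, D\varphi(y_0)) > c$, a contradiction.

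When $y_0$ is not in the cut locus of $x_0$ the function $d(x_0, \cdot)$ is smooth near $y_0$ with unit-norm gradient, and one simply takes
\[
\varphi(y) := u(y_0) + (\kappa_c + \eta)\bigl(d(x_0, y) - d(x_0, y_0)\bigr),
\]
which is $C^1$, agrees with $u$ at $y_0$, lies above $u$ near $y_0$ (because $y_0$ maximises $w$), and satisfies $|D\varphi(y_0)|_{y_0} = \kappa_c + \eta$, triggering the contradiction.

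The main obstacle is the case where $y_0$ lies in the cut locus of $x_0$, since $d(x_0, \cdot)$ is then merely Lipschitz near $y_0$. I would handle this by a standard smoothing-from-above trick: pick a minimising unit-speed geodesic $\gamma$ from $x_0$ to $y_0$ and, for small $\epsilon > 0$, set $p_\epsilon := \gamma(d(x_0,y_0) - \epsilon)$. For $\epsilon$ small enough $y_0$ lies outside the cut locus of $p_\epsilon$ (cut distances being bounded below on compact sets), so $\rho(y) := d(x_0, p_\epsilon) + d(p_\epsilon, y)$ is smooth near $y_0$ with $|D\rho(y_0)|_{y_0} = 1$, dominates $d(x_0, \cdot)$ globally by the triangle inequality, and satisfies $\rho(y_0) = d(x_0, y_0)$ because equality holds along the minimising geodesic through $p_\epsilon$. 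Replacing $d(x_0, \cdot)$ by $\rho$ in the construction of $\varphi$ yields a bona fide $C^1$ upper test function for $u$ at $y_0$ with $|D\varphi(y_0)|_{y_0} = \kappa_c + \eta$, and the same contradiction closes the proof.
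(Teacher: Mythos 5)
The paper does not give its own proof of Proposition~\ref{fathisublip}; it only cites Crandall--Lions, Ishii, and Fathi for this ``classical'' fact. Your argument is a correct, self-contained proof and it follows the standard scheme one finds in those references: penalize with the cone $w(y)=u(y)-(\kappa_c+\eta)d(x_0,y)$, show its maximum must sit at the vertex $x_0$, and at any other putative maximizer $y_0$ contradict the subsolution inequality with a $C^1$ upper test whose differential has fibre norm $\kappa_c+\eta>\kappa_c$. Your treatment of the cut-locus obstruction is also correct: sliding the vertex to $p_\epsilon=\gamma(d(x_0,y_0)-\epsilon)$ and replacing $d(x_0,\cdot)$ with $\rho(y)=d(x_0,p_\epsilon)+d(p_\epsilon,y)$ gives a smooth majorant touching $d(x_0,\cdot)$ at $y_0$, and for $\epsilon$ smaller than the injectivity radius of the compact manifold (which is positive), $y_0$ lies strictly inside the injectivity ball around $p_\epsilon$, so $d(p_\epsilon,\cdot)$ is smooth near $y_0$ with unit-norm gradient; this is exactly what your phrase ``cut distances bounded below on compact sets'' amounts to. One small formal remark: when the sublevel set $\{(x,p):H(x,p)\leq c\}$ is empty one has $\kappa_c=\sup\emptyset=-\infty$, but in that case there are no subsolutions at all and the statement is vacuous, so nothing is lost. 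The proof is complete.
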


For every $t>0$ and $x, y\in M$, let $\Gamma^t_{x,y}$ denote the set of absolutely continuous curves $\xi: [0,t]\to M$ with $\xi(0)=x$ and $\xi(t)=y$.  We define the {\em action function}
\begin{align*}
	h_t(x,y)=\inf_{\xi\in \Gamma^t_{x,y}}\int_{0}^{t}[L(\xi(s),\dot{\xi}(s))+c(H)]\ ds,
\end{align*}
which might be $+\infty$ if the distance between $x$ and $y$ is large and $t>0$ is small. 
Then, we define a real-valued function $h$ on $M\times M$ by  
\begin{align*}
	h(x,y):=\liminf_{t\to\infty} h_t(x,y).
\end{align*}
In the literature, $h(x,y)$ is called the {\em Peierls barrier}. This leads us to define the so-called {\em projected Aubry set} $\mathcal{A}$  by
\begin{align*}
	\mathcal{A}:=\{x\in M: h(x,x)=0\}.
\end{align*}
Since $M$ is compact, the projected Aubry set $\mathcal{A}$ is nonempty and closed.

We recall the ``semi-distance'' $S : M\tim M\to\R$, as introduced in \cite{Fathi_Siconolfi2005} (see also \cite[Chapter 8]{Fathi_book}), given by
\[
S(y,x)=\sup\{\psi(x)-\psi(y)\mid \psi\in \cS\big(H-c(H)\big)\}.
\]
Here and henceforth, we denote by $\cS(G)$ the set of all viscosity subsolutions of 
$G(x,Du)=0$. 

We collect some basic properties of the function $S$: 
\begin{Pro} \label{semi-distance} 
\hfill
\begin{enumerate}[\rm(1)]
	\item $S$ is finite-valued function and satisfies the triangle inequality: 
     $S(x,y)\leq S(x,z)+S(z,y)$.
     \item If $w$ is a  viscosity subsolution of $H(x, Du)=c(H)$, then $w(y)-w(x)\leqslant S(x,y)$.
     \item For any $y\in M$, the function $x\mapsto S(y,x)$ is a viscosity subsolution of  
$H(x,Du)=c(H)$.
\end{enumerate}
\end{Pro}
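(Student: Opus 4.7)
The plan for (1) is to extract finiteness from the uniform Lipschitz bound given by Proposition \ref{fathisublip}: every $\psi\in\cS\big(H-c(H)\big)$ has Lipschitz constant at most $\kappa_{c(H)}$, so $\psi(x)-\psi(y)\leq \kappa_{c(H)}\, d(x,y)\leq \kappa_{c(H)}\operatorname{diam}(M)$ for all $x,y\in M$, uniformly in $\psi$. Taking the supremum in $\psi$ shows that $S$ is finite-valued. For the triangle inequality, for each admissible $\psi$ I split
\[
\psi(y)-\psi(x)=\big[\psi(z)-\psi(x)\big]+\big[\psi(y)-\psi(z)\big]\leq S(x,z)+S(z,y),
\]
and then take the supremum over $\psi$ on the left.

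Property (2) is just a direct reading of the definition of $S$: since $w\in\cS\big(H-c(H)\big)$, the quantity $w(y)-w(x)$ is one of the values whose supremum defines $S(x,y)$, hence $w(y)-w(x)\leq S(x,y)$.

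For (3), I would fix $y\in M$ and exploit the fact that adding a constant to a viscosity subsolution of $H(x,Du)=c(H)$ preserves the subsolution property. Thus for every $\psi\in\cS\big(H-c(H)\big)$ the shifted function $\psi-\psi(y)$ also lies in $\cS\big(H-c(H)\big)$ and satisfies $(\psi-\psi(y))(x)=\psi(x)-\psi(y)$, so $x\mapsto S(y,x)$ is the pointwise supremum of the family $\{\psi-\psi(y)\mid \psi\in \cS\big(H-c(H)\big)\}$ of viscosity subsolutions. The equi-Lipschitz bound from (1) guarantees that this supremum is itself Lipschitz, hence continuous, and the classical stability principle (the upper semicontinuous envelope of a supremum of viscosity subsolutions is a viscosity subsolution) then applies with no envelope actually needed. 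The only delicate point in the whole argument is this last appeal to stability under supremum, and it is only ``delicate'' in the sense that one has to notice that continuity of $S(y,\cdot)$ is automatic from the uniform Lipschitz bound; the rest is essentially bookkeeping with the definition of $S$.
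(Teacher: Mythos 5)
Your proof is correct and follows essentially the same structure as the paper's sketch: finiteness and the Lipschitz bound on $S$ come from the equi-Lipschitz property of $\cS\big(H-c(H)\big)$ (Proposition \ref{fathisublip}), (2) is immediate from the definition, and (3) is the stability of viscosity subsolutions under suprema, with continuity of the sup free from the equi-Lipschitz bound. The only (minor) divergence is the triangle inequality: the paper derives it by combining (2) and (3) (i.e., using that $z\mapsto S(x,z)$ is itself an admissible $\psi$), whereas you prove it directly by splitting $\psi(y)-\psi(x)$ before taking the supremum — a slightly more self-contained route that avoids any dependence of (1) on (3).
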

The claims (1)--(3) above are easy  to check. Indeed, 
the set $\cS\big(H-c(H)\big)$ is not empty and, by Proposition \ref{fathisublip}, 
it is equi-Lipschitz continuous in $M$. Consequently, $S$ is well defined as a real-valued function 
and Lipschitz continuous in $M\tim M$.  Property (2) above is a simple consequence of the definition of $S$. In particular, since 
$z\mapsto S(x,z)$ is a member of $\cS\big(H-c(H)\big)$, we have $S(x,y)\leq S(x,z)+S(z,y)$. As for property (3), by the stability of viscosity subsolutions under the
sup-operation, it follows that $x\mapsto S(y,x)$ is a viscosity subsolution.

The results in the proposition  below seem to be new, at least for the Hamiltonian $H$ without a superlinear growth in the fibers. The argument in \cite{AAIY2016} is easily adapted to our current setting, we will present the proof in Section \ref{sec6} for the reader's convenience.   
 
\begin{Pro}\label{Pro_rep_h}
$z\in\cA$ if and only if the function $x\mapsto S(z, x)$ is a viscosity solution of the equation $H(x, Du)=c(H)$. In addition, we have the following representation: 
 \begin{equation*}
 	h(x,y)=\inf_{z\in\cA}[S(x,z)+S(z,y)].
 \end{equation*}
\end{Pro}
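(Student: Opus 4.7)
My plan is to break the statement into two pieces --- (a) the equivalence $z\in\cA \Leftrightarrow S(z,\cdot)$ is a viscosity solution of $H(x,Du)=c(H)$, and (b) the representation formula for $h$ --- and to first establish two auxiliary identities: (i) $S(y,x)\leq h(y,x)$ for every $y,x\in M$, and (ii) $S(y,x)=\inf_{t>0}h_t(y,x)$ for every $y,x\in M$.

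For (i), any $\psi\in\cS(H-c(H))$ is Lipschitz by Proposition \ref{fathisublip}, hence the standard integrated subsolution inequality gives $\psi(x)-\psi(y)\leq \int_0^t[L(\xi,\dot\xi)+c(H)]ds$ for every $\xi\in\Gamma^t_{y,x}$. Taking sup over $\psi$, infimum over $\xi$ and $\liminf_{t\to\infty}$ yields $S\leq h$. For (ii), the inequality $S\leq \inf_{t}h_{t}$ follows from (i); conversely, $x\mapsto \inf_{t>0}h_{t}(y,x)$ is itself a subsolution of $H-c(H)=0$ by a Lax--Oleinik-type computation, takes value $\leq 0$ at $x=y$, and is therefore bounded above by the maximal subsolution vanishing at $y$, namely $S(y,\cdot)$. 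Note that (i) also delivers $h(z,z)\geq S(z,z)=0$, so $z\in\cA \Leftrightarrow h(z,z)=0$.

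For the \textquotedblleft only if\textquotedblright\ half of (a), I pick quasi-closed curves $\eta_n\in \Gamma^{T_n}_{z,z}$ with $T_n\to\infty$ and $\int_0^{T_n}[L(\eta_n,\dot\eta_n)+c(H)]ds\to 0$. Given $x\in M$ and $\epsilon>0$, (ii) provides $\sigma_\epsilon\in \Gamma^{t_\epsilon}_{z,x}$ with action $<S(z,x)+\epsilon$; concatenating $\eta_n$ with $\sigma_\epsilon$ produces a curve from $z$ to $x$ in time $T_n+t_\epsilon\to\infty$ with action tending to $S(z,x)+\epsilon$, whence $h(z,x)\leq S(z,x)$. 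Combined with (i), $h(z,\cdot)=S(z,\cdot)$, and $h(z,\cdot)$ is a viscosity solution by the standard weak-KAM argument, re-proved under (H1)--(H3) in Section \ref{sec6}: the dynamic-programming inequality $h(z,x)\leq h(z,y)+h_t(y,x)$ and its approximate converse yield both sub- and supersolution properties. For the \textquotedblleft if\textquotedblright\ direction, since $S(z,\cdot)$ is a viscosity solution, the optimal-control representation for solutions (Appendix A) gives $S(z,x)=\inf_{y\in M}[S(z,y)+h_t(y,x)]$ for every $t>0$; evaluating at $x=z$ yields $\inf_y[S(z,y)+h_t(y,z)]=0$, and using (ii) to approximate $S(z,y)$ by some action $h_s(z,y)$ produces a closed curve at $z$ of time $s+t$ with action arbitrarily small, so $h(z,z)=0$.

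To prove (b), the $\leq$ direction uses $h_{s+\tau}(x,y)\leq h_{s}(x,z)+h_{\tau}(z,y)$, passes to $\liminf$ in $\tau$ to get $h(x,y)\leq h_s(x,z)+h(z,y)$, specializes to $z\in\cA$ so that $h(z,y)=S(z,y)$ (already established), takes $\inf_s$ and invokes (ii) to obtain $h(x,y)\leq S(x,z)+S(z,y)$; then take infimum over $z\in\cA$. The $\geq$ direction is the main obstacle: given $t_n\to\infty$ and nearly minimizing curves $\xi_n\in\Gamma^{t_n}_{x,y}$ with action tending to $h(x,y)$, I must locate midpoints $z_n=\xi_n(s_n)$ with $s_n,t_n-s_n\to\infty$ whose accumulation point $z$ actually lies in $\cA$ and satisfies $S(x,z)+S(z,y)\leq h(x,y)$. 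The delicate step is producing such an Aubry point without Euler--Lagrange dynamics --- the Hamiltonian is only continuous --- so I must rely solely on the lower semicontinuity and superlinearity of $L$ (secured by \eqref{L-lower-b}--\eqref{L-superlinear}) and an Arzel\`a--Ascoli/diagonal argument on suitably re-parametrized tails of $\xi_n$, following the non-smooth framework of AAIY2016; once such $z\in\cA$ is extracted, lower semicontinuity of the action combined with (ii) gives $S(x,z)+S(z,y)\leq h(x,y)$, closing the proof.
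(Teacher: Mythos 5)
Your two auxiliary identities and your treatment of the equivalence and of the $\leq$ half of the representation broadly line up with the paper's Proposition \ref{S_&_h_t}, Theorem \ref{equ_Aub}, and Lemma \ref{h<=S+S}, though you phrase the equivalence variationally (concatenating quasi-closed loops with near-minimizers) while the paper argues through the Cauchy problem via Propositions \ref{Hopf} and \ref{comp-cp}. One minor caveat: your ``only if'' step leans on ``$h(z,\cdot)$ is a viscosity solution by the standard weak KAM argument,'' but note that in the paper, Proposition \ref{proper_criti_sol}(2) is derived \emph{from} the present proposition, so you must re-derive that fact independently through the Lax--Oleinik machinery of Section \ref{sec6} to avoid circularity. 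That is doable, but it should be done rather than invoked.

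The genuine gap is in the $\geq$ direction of the representation formula. You propose to pick nearly minimizing curves $\xi_n\in\Gamma^{t_n}_{x,y}$, choose midpoint times $s_n$ with $s_n,t_n-s_n\to\infty$, extract an accumulation point $z$ of $\xi_n(s_n)$ by Arzel\`a--Ascoli, and then assert that $z\in\cA$. This last assertion is not a technicality to be deferred: showing $h(z,z)=0$ requires constructing closed loops at $z$, of arbitrarily long time and vanishing action, out of the tails of the $\xi_n$, and without Euler--Lagrange dynamics and with $L$ merely lower semicontinuous there is no compactness on the curves themselves that makes this routine. Superlinearity of $L$ controls average speed but gives no equicontinuity of the $\xi_n$, so a subsequential limit of curves (as opposed to of single points) is not available, and even if it were, lower semicontinuity only gives one-sided control on the limiting action. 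In short, ``once such $z\in\cA$ is extracted'' presupposes the hardest step. The paper's Lemma \ref{h>=S+S} avoids the problem entirely with a different mechanism: Lemma \ref{Aubry_prop} produces a Lipschitz function $\psi$ and $f\in C(M)$ with $H(x,D\psi)\leq c(H)+f$ a.e.\ and $f$ strictly negative off a neighborhood $U$ of $\cA$; integrating $D\psi_\gd$ (a $C^1$ smoothing) along a long nearly minimizing curve shows that if the curve stayed outside $U$ its action would be at most $h(x,y)+\ep-\gd T$, forcing a contradiction for $\gd T$ large. Hence the curve enters $U$, one picks a nearby genuine Aubry point $z_j\in\cA$, and the Lipschitz continuity of $S$ absorbs the $O(\ep_j)$ error. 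That strict-subsolution argument is what you are missing, and substituting for it by a compactness/diagonalization scheme in this non-Tonelli setting would require a substantial new argument of its own.
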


As a result, we have the following properties for $h$ where item (1) and item (3)  are a direct consequence of Proposition \ref{semi-distance} and \ref{Pro_rep_h}. As for item (2), note that by Proposition \ref{Pro_rep_h}, for any $z\in\cA$, $x\mapsto S(z,x)$ is a 
viscosity solution of $H(x, Du)=c(H)$ in $M$, and we deduce by the stability property of 
viscosity solutions that $x\mapsto h(y,x)=\inf_{z\in\cA}[S(y,z)+S(z,x)]$ is also a viscosity solution. Item (4) is exactly \cite[Theorem 8.5.5]{Fathi_book}.
\begin{Pro}\label{proper_criti_sol}
\begin{enumerate}[\rm(1)]
	\item $h$ is finite valued and Lipschitz continuous.
     \item For any $y\in M$, the function $x\mapsto h(y,x)$  is a viscosity solution of  
$H(x,Du)=c(H)$ and the function $x\mapsto -h(x,y)$  is a viscosity subsolution of  $H(x,Du)=c(H)$.
	\item If $v$ is a  viscosity subsolution of $H(x, Du)=c(H)$, then $v(y)-v(x)\leqslant h(x,y)$.
	\item If $f$ and $g$ are a viscosity subsolution and a viscosity supersolution 
     of $H(x, Du)=c(H)$ and $f\leq g$ on the set $\mathcal{A}$, then $f\leq g$ in $M$.
\end{enumerate}
\end{Pro}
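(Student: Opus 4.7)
My plan is to derive items (1)–(3) from the representation $h(x,y)=\inf_{z\in\cA}[S(x,z)+S(z,y)]$ in Proposition \ref{Pro_rep_h}, together with the elementary properties of $S$ in Proposition \ref{semi-distance}; item (4) is exactly Theorem 8.5.5 of Fathi's book and needs no independent argument. The preliminary observation is that, by Proposition \ref{fathisublip}, the class $\cS(H-c(H))$ is equi-Lipschitz with a common constant, so $S$ is finite-valued and Lipschitz on the compact product $M\times M$.

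For (1), the representation writes $h$ as an infimum over the nonempty compact set $\cA$ of an $(x,y)$-family that is equi-Lipschitz uniformly in $z$, so $h$ is finite and Lipschitz. For (3), given $v\in\cS(H-c(H))$, two applications of Proposition \ref{semi-distance}(2) yield $v(z)-v(x)\leq S(x,z)$ and $v(y)-v(z)\leq S(z,y)$ for every $z\in\cA$; summing and taking the infimum over $z\in\cA$ gives $v(y)-v(x)\leq h(x,y)$.

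For (2), the solution assertion for $x\mapsto h(y,x)$ is the one already sketched in the excerpt: by Proposition \ref{Pro_rep_h}, for each $z\in\cA$ the function $x\mapsto S(z,x)$ is a critical viscosity solution, and $x\mapsto h(y,x)$ is the pointwise infimum over the compact set $\cA$ of the equi-Lipschitz family $\{S(y,z)+S(z,\cdot)\}_{z\in\cA}$; by standard stability — automatic on the supersolution side, and on the subsolution side obtained by a local contact-point comparison with $S(z_0,\cdot)$ at a minimizing $z_0\in\cA$ using the convexity and coercivity of $H$ — this is itself a viscosity solution. For the subsolution property of $x\mapsto-h(x,y)$, I would exploit the time-reversal structure of the Peierls barrier: given a test $\varphi\geq-h(\cdot,y)$ with contact at $x_0$, pick a near-minimizing curve $\xi$ realizing $h(x_0,y)$ and use the Fenchel--Young inequality $\du{D\varphi(x_0),\dot\xi(0)}_{x_0}\leq L(x_0,\dot\xi(0))+H(x_0,D\varphi(x_0))$ together with the calibrated relation along $\xi$ to extract $H(x_0,D\varphi(x_0))\leq c(H)$.

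The main obstacle is this final step. A direct supremum-of-subsolutions stability argument fails, because the representation writes $-h(\cdot,y)=\sup_{z\in\cA}[-S(\cdot,z)-S(z,y)]$, and since $S$ is not symmetric the pieces $-S(\cdot,z)$ are not \emph{a priori} critical subsolutions. In the non-smooth, possibly non-superlinear setting of \cite{AAIY2016,Fathi_Siconolfi2005,Ishii2008}, the calibrated-curve calculation sketched above is essentially the only place where the Lagrangian action representation — rather than merely the semi-distance $S$ — is genuinely required; with it, the subsolution property of $-h(\cdot,y)$ follows and completes the proof of the proposition.
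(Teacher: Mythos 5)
Your derivations of items (1), (3), (4) and the solution half of item (2) are correct and coincide with the paper's sketch, which routes everything through Proposition~\ref{Pro_rep_h}'s formula $h(x,y)=\inf_{z\in\mathcal{A}}[S(x,z)+S(z,y)]$, the equi-Lipschitz bound from Proposition~\ref{fathisublip}, and stability. The one place you diverge, and over-complicate, is the subsolution claim for $x\mapsto -h(x,y)$, which the paper's text does indeed leave implicit.

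Your objection that the pieces $x\mapsto -S(x,z)$ are not \emph{a priori} critical subsolutions is technically accurate, since Proposition~\ref{semi-distance} only records this for the second-argument slice $x\mapsto S(z,x)$. But the conclusion that a stability argument must therefore fail is wrong. Unwinding the definition, $-S(x,z)=\inf\{\psi(x)-\psi(z)\mid \psi\in\cS(H-c(H))\}$, so $-S(\cdot,z)$ is the infimum of the equi-Lipschitz family of subsolutions $\psi-\psi(z)$. Under (H2)--(H3), an infimum of equi-Lipschitz viscosity subsolutions of a convex, coercive Hamilton--Jacobi equation is again a viscosity subsolution; this is a standard consequence of the a.e.\ characterization of subsolutions for convex Hamiltonians. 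Hence $-S(\cdot,z)\in\cS(H-c(H))$ for every $z$, and $-h(\cdot,y)=\sup_{z\in\mathcal{A}}\bigl[-S(\cdot,z)-S(z,y)\bigr]$ is a supremum of an equi-Lipschitz family of subsolutions, hence a subsolution, by ordinary sup-stability. This two-step stability argument is exactly parallel to the paper's treatment of the solution half of (2) and uses only material already in Section~\ref{section_pre}. Your calibrated-curve alternative, besides being left at the level of a heuristic (it is not clear which ``calibrated relation'' applies to a near-minimizer of the liminf defining $h$), imports the Lagrangian/action machinery and the identification $S=\inf_{t>0}h_t$ that the paper only develops in Section~\ref{sec6}, so it is both heavier and logically out of sequence for this proposition.
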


In this paper, we need a well known approximation argument for subsolutions of convex Hamilton-Jacobi 
equations, this technique  is standard and can be found in many places, for instance, in the proof of 
the assertion (d) of \cite[Lemma 2.2]{Evans1992}.  To better applied to our 
problem, we refer the reader to \cite[Theorem 10.6]{Fathi2012}.

\begin{Pro}\label{sm_subapprox}
Let $u: M\to\R$ be a  Lipschitz viscosity subsolution of the equation $H(x, Du)$ $=c(H)$. Then for any  number $\epsilon>0$, we can find a $C^\infty$ function $w: M\to\R$	such that $\|u-w\|_\infty\leqslant \epsilon$, $H(x, Dw)\leqslant c(H)+\epsilon$ for every $x\in M$, and $\|Dw\|_\infty\leqslant\textup{Lip}(u)+1$. 
\end{Pro}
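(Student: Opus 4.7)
The plan is a standard mollification argument carried out in local coordinates, with the convexity of $p\mapsto H(x,p)$ used through Jensen's inequality to transfer the subsolution property to the smoothed function. The underlying fact driving everything is that a Lipschitz viscosity subsolution of a convex Hamilton--Jacobi equation coincides with an a.e.\ subsolution in the classical sense: by Rademacher's theorem $u$ is differentiable at almost every $x\in M$, and at every such point $H(x,Du(x))\leq c(H)$.

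First I would fix a finite atlas $(U_i,\varphi_i)_{i=1}^{N}$ of coordinate charts on $M$ and work in one chart at a time. With $\rho_\gd$ a standard mollifier of scale $\gd$, set $u^\gd=u*\rho_\gd$ in the chart. Then $u^\gd$ is $C^\infty$, $\|u^\gd-u\|_\infty=O(\gd\,\Lip(u))$, and $Du^\gd(x)=\int Du(x-y)\rho_\gd(y)\,dy$, so in particular $\|Du^\gd\|_\infty\leq \Lip(u)$. Jensen's inequality applied to the convex map $p\mapsto H(x,p)$ yields
\[
H(x,Du^\gd(x))\leq \int H(x,Du(x-y))\rho_\gd(y)\,dy.
\]
Writing $H(x,Du(x-y))=H(x-y,Du(x-y))+\bigl[H(x,Du(x-y))-H(x-y,Du(x-y))\bigr]$, using the a.e.\ subsolution property for the first summand and the uniform continuity of $H$ on the compact set $\{(x,p):x\in M,\,|p|_x\leq \Lip(u)\}$ for the second, one gets
\[
H(x,Du^\gd(x))\leq c(H)+\eta(\gd),\qquad \eta(\gd)\to 0 \text{ as } \gd\to 0.
\]

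The remaining task is to globalise across charts. A direct partition-of-unity glueing would inject derivatives of the cut-off functions into $Dw$ and potentially spoil both the gradient bound and the subsolution estimate. The standard way around this, which I would follow, is either (i) to embed $M$ smoothly into some $\R^N$, extend $u$ to a Lipschitz function on a tubular neighbourhood with the same Lipschitz constant (using McShane's extension), mollify there with a single Euclidean kernel, and restrict back to $M$; or (ii) to define a Riemannian mollifier using the exponential map on $M$. Either choice produces a single smooth $w$ on $M$ with $\|Dw\|_\infty\leq \Lip(u)+o(1)$, $\|u-w\|_\infty=o(1)$, and $H(x,Dw(x))\leq c(H)+o(1)$ as the mollification scale tends to zero. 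Choosing that scale small enough that each error term is less than $\ep$ and the loss in the gradient bound is less than $1$ delivers the required $w$.

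The main obstacle I anticipate is exactly this globalisation step: in a Euclidean chart the three conclusions are essentially immediate from Jensen's inequality and the mollifier's properties, but on a general compact Riemannian manifold the bound $\|Dw\|_\infty\leq \Lip(u)+1$ is delicate and demands a smoothing whose differential is controlled almost exactly by the Lipschitz constant of $u$. Once a suitable smoothing procedure is chosen, the other two estimates follow routinely from uniform convergence of the mollifier and the uniform continuity of $H$ on compact subsets of $T^*M$.
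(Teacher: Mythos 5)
The paper does not prove this proposition; it cites \cite[Lemma~2.2(d)]{Evans1992} and \cite[Theorem~10.6]{Fathi2012}, both of which use exactly the partition-of-unity glueing that you reject. Your rejection of that method is incorrect, and this is the one conceptual misstep in an otherwise sound plan. If $\{\theta_i\}$ is a partition of unity subordinate to a finite atlas and $u_i^\gd$ denotes the mollification of $u$ in the $i$-th chart, set $w=\sum_i\theta_i u_i^\gd$. Because $\sum_i D\theta_i=D\bigl(\sum_i\theta_i\bigr)=0$, one has
\[
Dw=\sum_i\theta_i\,Du_i^\gd+\sum_i (D\theta_i)\bigl(u_i^\gd-u\bigr),
\]
so the cut-off derivatives appear only multiplied by $u_i^\gd-u=O(\gd)$; they do not spoil the estimates. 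Convexity of $p\mapsto H(x,p)$ then gives $H\bigl(x,\sum_i\theta_i\,Du_i^\gd\bigr)\leq\sum_i\theta_i\,H(x,Du_i^\gd)\leq c(H)+o(1)$, and uniform continuity of $H$ on $\{|p|_x\leq\Lip(u)+1\}$ absorbs the $O(\gd)$ remainder, yielding all three required conclusions without any global embedding.

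Your alternative (i) as written has a genuine gap. McShane's extension preserves the \emph{Euclidean} Lipschitz constant $\Lip_{\mathrm{euc}}(u|_M)=\sup_{x\neq y}|u(x)-u(y)|/|x-y|_{\R^N}$, which is bounded below by $\Lip(u)\cdot\sup_{x\neq y}d(x,y)/|x-y|_{\R^N}$, a quantity strictly larger than $\Lip(u)$ by a factor depending only on the embedding (for $S^1\subset\R^2$ the factor is $\pi/2$, attained by $u=$ geodesic distance to a fixed point). Since this excess \emph{scales} with $\Lip(u)$, the mollified gradient bound you get is $\Lip_{\mathrm{euc}}(u|_M)$, which in general exceeds $\Lip(u)+1$, so the stated estimate $\|Dw\|_\infty\leq\Lip(u)+1$ fails. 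The correct extension is not McShane but $\tilde u:=u\circ\pi$, where $\pi$ is the nearest-point projection of a tubular neighbourhood onto $M$; then $D\tilde u(y)=Du(\pi(y))\circ D\pi(y)$ and $\|D\pi(y)\|_{\mathrm{op}}\to 1$ as $y\to M$, which does give the required $\Lip(u)+o(1)$ bound. Your option (ii), a Riemannian mollifier via the exponential map, is also viable. The a.e.\ subsolution claim and the Jensen step are correct as stated, and those are the real engine of the proof; once the globalisation is repaired (by partition of unity or by $u\circ\pi$), the argument goes through.
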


Next, we will  introduce the notion of  Mather measure. For the purpose of this paper, we adopt the equivalent definition originate from Ma\~n\'e \cite{Mane1996}, see also \cite{Fathi_Siconolfi2004}.
Recall that a Borel probability measure $\tilde{\mu}$ on $TM$ is called {\em closed} if it satisfies $\int_{TM} |v|_x\ d\tilde{\mu}<+\infty$ and
 $$\int_{TM} \du{D\varphi,v}_x\ d\tilde{\mu}(x,v)=0,\quad\text{ for all~} \varphi\in C^1(M).$$ 
 Let $\mathcal{P}$ be the set of closed probability measures on $TM$.  
The set is nonempty and it has been shown that the critical value $c(H)$ can be obtained by considering a minimizing problem. More precisely, the following relation holds  (see \cite[Theorem 5.7]{DFIZ2016} 
in the case of $H$ having a superlinear grwoth in the fibers, and \cite{AAIY2016} in the case of 
$H$ with only coercive condition):
\begin{align*}
	-c(H)=\min_{\tilde{\mu}\in\mathcal{P}}\int_{TM}L(x,v)\ d\tilde{\mu}(x,v).
\end{align*}
Here, we remark that $L$ may take the value $+\infty$ and the identity above, in particular, 
requires at least that $L<+\infty$ $\tilde\mu$-almost everywhere.     

For any  probability measure $\tilde{\mu}$ on $TM$, we define the corresponding projected measure on $M$ by $\mu:=\pi_\#\tilde{\mu}$, where $\pi: TM\to M$ is the canonical projection, namely $\mu(A)=\tilde{\mu}(\pi^{-1}(A))$ and, equivalently, 
\begin{equation*}
	\int_M f(x)\,d\mu(x)=\int_{TM} (f\circ\pi)(x,v)\,d\tilde{\mu}(x,v),\quad \text{for every~} f\in C(M).
\end{equation*}

A {\em Mather measure} for the Lagrangian $L$ is a  measure $\tilde{\mu}\in\mathcal{P}$  satisfying the following 
minimizing property:
\begin{align}
	\int_{TM}L(x,v)\ d\tilde{\mu}=-c(H).
\end{align}
We denote by $\widetilde{\mathfrak{M}}(L)$ the set of all Mather measures, and by   
\begin{equation}\label{pro_Ma}
	\mathfrak{M}(L)=\pi_{\#}\widetilde{\mathfrak{M}}(L)
\end{equation} 
the set of  all projected Mather measures.

Now we end up this section with a  remarkable result which is vital in the following sections.
\begin{The}[\cite{DFIZ2016, AAIY2016}]\label{DFIZ_result}
Let $H(x, p)$ satisfy \mbox{\rm(H1)-(H2)} and be coercive in the fibers. Then $$\lambda u+H(x, D u)=c(H), \quad x\in M$$ 
has a unique continuous viscosity solution $u^\lambda$, the family $\{u^\lambda\}$ 
converges uniformly, as $\lambda\to 0$, to a single critical solution $u^0$ of $H(x, D u)=c(H)$, and the limit $u^0$ is represented as 
\begin{equation}\label{dfiz2016_rep}
	u^0(x)=\sup\limits_{u\in\mathcal{F}(H)} u(x)=\min\limits_{\mu\in \mathfrak{M}(L)}\int_{M}h(y,x)\ d\mu(y),
\end{equation}
where $\mathcal{F}(H)$ denotes the set of all viscosity subsolutions $w$ of $H(x, Du)=c(H)$ satisfying $\int_{M} w(x)d\nu(x)\leqslant 0$ for every $\nu\in\mathfrak{M}(L)$.
\end{The}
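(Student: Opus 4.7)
The plan is to adapt the strategy of Davini, Fathi, Iturriaga, Zavidovique \cite{DFIZ2016} to the present continuous, non-superlinear setting, combining it with the compactification techniques of \cite{AAIY2016}. I would proceed in four stages: well-posedness of $u^\lambda$, extraction of a subsequential limit, upper bound via Mather measures, and matching lower bound via discounted occupation measures.

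For well-posedness, strict monotonicity of $\lambda u + H(x,p) - c(H)$ in $u$ provides the standard comparison principle, and Perron's method delivers a continuous viscosity solution $u^\lambda$. Translating a critical subsolution (which exists by the very definition of $c(H)$) by a suitably large positive/negative constant produces a super-/sub-solution of the discounted equation, yielding $\sup_\lambda \|u^\lambda\|_\infty < \infty$. Coercivity of $H$, applied via Proposition \ref{fathisublip} to $H(x,Du^\lambda) \leqslant c(H) + \lambda\|u^\lambda\|_\infty$, then makes $\{u^\lambda\}$ equi-Lipschitz, so Arzelà-Ascoli extracts, along $\lambda_n \to 0$, a uniform limit $u^0 \in \Lip(M)$ which solves $H(x,Du) = c(H)$ by stability.

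For the upper bound $u^0(x) \leqslant \min_{\mu \in \mathfrak{M}(L)} \int_M h(y,x)\,d\mu(y)$, I would fix $\tilde{\mu} \in \widetilde{\mathfrak{M}}(L)$ with projection $\mu$, smooth $u^\lambda$ into $w_\epsilon$ via Proposition \ref{sm_subapprox} so that $H(x,Dw_\epsilon) \leqslant c(H) - \lambda u^\lambda + O(\epsilon)$, and apply Fenchel's inequality $\langle Dw_\epsilon,v\rangle_x \leqslant H(x,Dw_\epsilon) + L(x,v)$. Integrating against $\tilde{\mu}$, the closedness identity $\int \langle Dw_\epsilon,v\rangle\,d\tilde{\mu} = 0$ (valid since $w_\epsilon$ is smooth) together with the Mather identity $\int L\,d\tilde{\mu} = -c(H)$ telescopes the inequality to $\lambda\int u^\lambda\,d\mu \leqslant O(\epsilon)$; sending $\epsilon \to 0$ and then $\lambda \to 0$ yields $\int u^0\,d\mu \leqslant 0$. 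Combining this with the subsolution bound $u^0(x) \leqslant u^0(y) + h(y,x)$ from Proposition \ref{proper_criti_sol}(3) and integrating in $y$ against $\mu$ delivers the claimed estimate. Running the same Fenchel argument with any $w \in \mathcal{F}(H)$ in place of $u^\lambda$ also yields $w(x) \leqslant \min_\mu \int h(y,x)\,d\mu(y)$, so once the reverse inequality is established, the fact that $u^0 \in \mathcal{F}(H)$ (just proved) will force $u^0 = \sup_{w \in \mathcal{F}(H)} w$ as well.

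The principal obstacle is the matching lower bound $u^0(x) \geqslant \min_\mu \int h(y,x)\,d\mu(y)$, which simultaneously pins down $u^0$ uniquely and upgrades subsequential convergence to full convergence of $\{u^\lambda\}$. For this I would invoke the optimal-control representation of $u^\lambda$ from the associated discounted Cauchy problem, developed in Appendices A and B for lower-semicontinuous Lagrangians possibly taking the value $+\infty$: for each $x$, $u^\lambda(x)$ is the infimum of discounted action integrals over absolutely continuous curves terminating at $x$. Given an almost-minimizing curve $\xi_\lambda$ with $\xi_\lambda(0) = x$, the rescaled occupation measure $\tilde{\mu}_\lambda$ on $TM$, defined by $\int f\,d\tilde{\mu}_\lambda := \lambda\int_{-\infty}^{0} e^{\lambda t} f(\xi_\lambda(t),\dot\xi_\lambda(t))\,dt$, is a probability measure satisfying the near-closedness identity $\int \langle D\varphi,v\rangle\,d\tilde{\mu}_\lambda = O(\lambda)$ for $\varphi \in C^1(M)$. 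The equi-Lipschitz bound on $u^\lambda$ together with the fiberwise superlinearity of $L$ from \eqref{L-superlinear} confines the support of $\tilde{\mu}_\lambda$ to a bounded subset of $TM$, so tightness yields a weak-$\ast$ limit $\tilde{\mu}_0$ which is closed and satisfies $\int L\,d\tilde{\mu}_0 \leqslant -c(H)$, i.e.\ is a Mather measure. Passing to the limit in the variational formula, with the decomposition $h(y,x) = \inf_{z \in \mathcal{A}}[S(y,z) + S(z,x)]$ from Proposition \ref{Pro_rep_h} used to reconstruct the Peierls barrier along the limiting curves, yields $u^0(x) \geqslant \int_M h(y,x)\,d\mu_0(y)$ for the projected Mather measure $\mu_0$. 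This closes the chain of inequalities and gives \eqref{dfiz2016_rep}. The delicate point throughout is the control of the lower-semicontinuous Lagrangian $L$, which may take the value $+\infty$ outside a neighborhood of the zero section; the bounds \eqref{L-lower-b}--\eqref{L-upper-b} together with the coercivity of $H$ are the essential tools for identifying the limit measure and its action.
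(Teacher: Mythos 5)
Your well-posedness argument, the derivation of $\int_M u^0\,d\mu\leqslant 0$ via smooth approximation and Fenchel's inequality, and the chain $u^0(x)\leqslant\int_M u^0\,d\mu+\int_M h(y,x)\,d\mu(y)\leqslant\int_M h(y,x)\,d\mu(y)$ all agree with the paper's Propositions \ref{prop6-1} (first half) and \ref{prop6-2} (first half), and your bookkeeping showing that the $\sup$-formula follows once the $\min$-formula and $u^0\in\mathcal F(H)$ are in hand is sound. The problem is entirely in the lower bound.

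The paper does \emph{not} try to show directly that $u^0(x)\geqslant\int_M h(y,x)\,d\mu_0(y)$ for a Mather measure $\mu_0$ built from discounted occupation measures. Instead it first proves the $\sup$-characterization $u^0=\sup_{w\in\mathcal F(H)}w$ (Proposition \ref{prop6-1}), and only then derives the $\min$-formula from it. The crucial step in Proposition \ref{prop6-1} is not a lower estimate of $u^\lambda$ by the Peierls barrier along near-optimal curves: it is the transfer inequality $w(z)\leqslant u^\lambda(z)+\int_M w\,d\mu^{z,\lambda,j}+\epsilon$, obtained by testing the closedness identity \erf{sec6-5} with a smooth approximant of the \emph{subsolution} $w$, followed by the limits $j\to\infty$, $\lambda\to 0$ and the hypothesis $\int w\,d\mu\leqslant 0$. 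To make the DFIZ occupation measures $\tilde\mu^{z,\lambda,j}$ available at all when $L$ may be $+\infty$, the paper replaces $H$ by the superlinear modifications $H_R^j=H+\delta_j\dist(p,B_{x,R})^2$, applies \cite[Proposition 3.5]{DFIZ2016} to the finite-valued Lagrangians $L_R^j$, and recovers the limit by monotone convergence and tightness using \erf{sec6-6}. Your sketch silently assumes a stationary discounted optimal-control representation for $u^\lambda$ with the raw, possibly $+\infty$-valued $L$; Appendices A and B supply a variational formula only for the \emph{Cauchy problem} $\partial_t u+H(x,Du)=c(H)$, not for $\lambda u+H=c(H)$, so that representation is not actually available from what is developed in the paper.

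Even granting such a representation and the passage to a limiting Mather measure $\tilde\mu_0$, the final step of your sketch — ``passing to the limit in the variational formula, with the decomposition $h(y,x)=\inf_{z\in\cA}[S(y,z)+S(z,x)]$ used to reconstruct the Peierls barrier along the limiting curves'' — is not an argument. A near-optimal curve for the discounted problem spends $(1-e^{-\lambda T})$ of its weight on $[-T,0]$ and does not by itself certify a finite-time action bound of the form $h_T(\xi_\lambda(-T),x)$, so it is unclear how the integrand $h(y,x)$ should emerge from the occupation measure. Once one has $u^0=\sup_{w\in\mathcal F(H)}w$, the paper's Proposition \ref{prop6-2} closes the gap cleanly: $y\mapsto -h(y,x)+w(x)$ lies in $\mathcal F(H)$, hence is $\leqslant u^0$, hence $w\leqslant u^0$ on $\cA$, and then Proposition \ref{proper_criti_sol}(4) propagates $w\leqslant u^0$ to all of $M$. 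You would need to reproduce something equivalent to this chain; as written, the lower bound has a genuine gap.
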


The convergence result in the first part of the theorem above has been obtained in \cite{DFIZ2016} and the representation formula \eqref{dfiz2016_rep}, under the superlinearity assumption on $H$ in the fibers, is contained in \cite{DFIZ2016}. 
Formula \eqref{dfiz2016_rep} in the general case can be easily obtained by adapting the proof in \cite{AAIY2016} 
to our case, or, in other words, by modifying the argument of \cite{DFIZ2016} with some technicalities from 
\cite{AAIY2016}.   Some of the details of such modifications are explained in Section \ref{sec6} 
for the reader's convenience.

\section{Existence of viscosity solutions and some  
Lipschitz estimates}\label{section_existence}
In this section, we establish the existence and uniqueness of  a solution of  equation \eqref{eq_H_lambda},  and give some uniform estimates for the whole family of solutions. 

We begin with a lemma which describes that  every continuous viscosity subsolution is necessarily Lipschitz continuous.
\begin{Lem}\label{sub_is_Lip}
Let $H^\lambda$ satisfy \mbox{\rm(SH1), (SH3)} and \mbox{\rm(SH4)}, then  any continuous viscosity subsolution $u^\lambda$ of \eqref{eq_H_lambda} is Lipschitz continuous.  
\end{Lem}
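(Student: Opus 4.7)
The plan is to reduce to Proposition \ref{fathisublip} by freezing the $u$-slot to a constant value via monotonicity. Since $u^\lambda$ is continuous on the compact manifold $M$, set $r := \|u^\lambda\|_\infty < +\infty$, so that $-r \leq u^\lambda(x) \leq r$ for every $x\in M$.

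First I would show that $u^\lambda$ is a viscosity subsolution of the $u$-independent Hamilton--Jacobi equation
\[
\tilde H(x, Du) := H^\lambda(x, Du, -r) = c(G).
\]
Indeed, for any $C^1$ test function $\varphi$ with $\varphi \geq u^\lambda$ and $\varphi(x_0) = u^\lambda(x_0)$, the original subsolution inequality gives $H^\lambda(x_0, D\varphi(x_0), u^\lambda(x_0)) \leq c(G)$, and by the strict monotonicity in (SH4) combined with $u^\lambda(x_0) \geq -r$, we get
\[
H^\lambda(x_0, D\varphi(x_0), -r) \leq H^\lambda(x_0, D\varphi(x_0), u^\lambda(x_0)) \leq c(G),
\]
which is exactly the subsolution condition for $\tilde H(x, Du) = c(G)$.

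Next I would verify that $\tilde H$ is coercive in the fibers. Applying the Lipschitz bound \eqref{assumption_lipschitz} from (SH4) with the admissible value $u = -r$, and recalling that $H^\lambda(x, p, 0) = G(x, p)$ by (SH1),
\[
|\tilde H(x, p) - G(x, p)| = |H^\lambda(x, p, -r) - H^\lambda(x, p, 0)| \leq \kappa_r^\lambda r,
\]
uniformly in $(x,p)$. Since $G$ is coercive in the fibers by (SH3), so is $\tilde H$ (with a coercivity modulus differing from that of $G$ only by the constant $\kappa_r^\lambda r$).

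Finally, I would apply Proposition \ref{fathisublip} to $\tilde H$ at the level $c(G)$ to conclude that the continuous subsolution $u^\lambda$ is Lipschitz continuous on $M$, with Lipschitz constant bounded by $\sup\{|p|_x : \tilde H(x,p) \leq c(G)\}$, which is finite by coercivity. I do not anticipate any real obstacle in this argument; it is a routine reduction in which monotonicity lets us replace the $u$-argument by a fixed constant, after which the coercivity of $G$ (slightly perturbed by a uniform constant) does all the work.
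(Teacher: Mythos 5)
Your argument is precisely the one the paper gives: freeze the $u$-slot at $-\|u^\lambda\|_\infty$, observe via monotonicity that $u^\lambda$ is a subsolution of the frozen (classical) equation, check that the frozen Hamiltonian remains coercive because it differs from $G$ by at most $\kappa_r^\lambda r$, and invoke Proposition \ref{fathisublip}. You have simply spelled out the coercivity and subsolution-transfer steps that the paper leaves as one-liners; no substantive difference.
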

\begin{proof}
Since $u^\lambda$ is bounded on $M$, we  set  $$\tilde{H}^\lambda(x, p):=H^\lambda(x, p,-\|u\|_\infty),$$ it is a classical continuous Hamiltonian  which is  coercive in the fibers by assumptions (SH3) and (SH4). So $u^\lambda$ is a continuous viscosity subsolution of $\tilde{H}^\lambda(x, Du)=c(G)$. Then by Proposition \ref{fathisublip}, $u^\lambda$ is Lipschitz continuous.
\end{proof}

Next, we  prove a  version of comparison principle. The proof  below is easily extended to the case of 
general semicontinuous sub- and super-solutions, however,  the continuous version is 
enough for the applications in this paper.

\begin{The}\label{The_comp_prin}
 Let $H(x,p,u):T^*M\times\R\to\R$ be a continuous Hamiltonian which is strictly increasing in $u$ for every $(x,p)\in T_x^*M$. Assume that the equation 
 \begin{equation}\label{H}
   	H(x ,Du(x), u(x))=0, \quad x\in M ,
   \end{equation}
  admits a Lipschitz continuous viscosity solution $w$. Then for any  continuous viscosity subsolution $f$ and any continuous  viscosity supersolution $g$, we have   	
 $f \leqslant g $.
   \end{The}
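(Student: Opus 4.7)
The plan is to derive $f\leq g$ by chaining two one-sided comparisons through the Lipschitz solution $w$: first $f\leq w$, and then $w\leq g$. Both are instances of a single auxiliary assertion: \emph{whenever $u_1$ is a continuous viscosity subsolution and $u_2$ a continuous viscosity supersolution of $H(x,Du,u)=0$ and at least one of $u_1,u_2$ is Lipschitz, then $u_1\leq u_2$.} I would invoke it first with $(u_1,u_2)=(f,w)$ and then with $(u_1,u_2)=(w,g)$.

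For the auxiliary assertion I argue by contradiction. Suppose $m:=\max_M(u_1-u_2)>0$. For $\sigma>0$ set
\[
\Phi_\sigma(x,y)=u_1(x)-u_2(y)-\tfrac{1}{\sigma}d(x,y)^2 \quad \text{on } M\times M,
\]
and let $(x_\sigma,y_\sigma)$ be a maximizer, which exists by compactness. Comparing $\Phi_\sigma(x_\sigma,y_\sigma)$ with $\Phi_\sigma(x_\sigma,x_\sigma)$ yields $\sigma^{-1}d(x_\sigma,y_\sigma)^2\leq |u_2(x_\sigma)-u_2(y_\sigma)|$, so if (say) $u_2$ is $L$-Lipschitz --- the case of $u_1$ Lipschitz being symmetric --- one obtains the crucial bound
\[
d(x_\sigma,y_\sigma)\leq L\sigma .
\]
For small $\sigma$, $d(x_\sigma,y_\sigma)$ lies well inside the injectivity radius, so the penalties in the test functions $x\mapsto u_1(x)-\sigma^{-1}d(x,y_\sigma)^2$ and $y\mapsto u_2(y)+\sigma^{-1}d(x_\sigma,y)^2$ are smooth, and their gradients $p_\sigma,q_\sigma$ (related by parallel transport along the minimizing geodesic from $x_\sigma$ to $y_\sigma$) have norms bounded uniformly by $2L$. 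The first-order viscosity inequalities give
\[
H(x_\sigma,p_\sigma,u_1(x_\sigma))\leq 0\leq H(y_\sigma,q_\sigma,u_2(y_\sigma)).
\]
Passing to a subsequence, $x_\sigma,y_\sigma\to x^*$ with $p_\sigma,q_\sigma$ converging to a common $p^*\in T^*_{x^*}M$ and $u_1(x_\sigma)-u_2(y_\sigma)\to m>0$. Continuity of $H$ yields $H(x^*,p^*,u_1(x^*))\leq 0\leq H(x^*,p^*,u_2(x^*))$ together with $u_1(x^*)>u_2(x^*)$, directly contradicting the strict monotonicity of $H$ in $u$.

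The main technical point I expect to be delicate is the clean handling of the doubling-of-variables procedure on the Riemannian manifold: identifying the two momenta via parallel transport, and justifying the Euclidean-style formulas for the gradients of $d(\cdot,\cdot)^2$ at the required scale. However, only first-order viscosity information is used (no second-order Ishii's lemma is needed), and the a priori bound $d(x_\sigma,y_\sigma)\leq L\sigma\to 0$ confines $(x_\sigma,y_\sigma)$ to a shrinking neighborhood of the diagonal where $d^2$ is smooth and everything reduces to the standard setup. Once the auxiliary Lipschitz-sided comparison is in place, the two-step application through $w$ immediately yields $f\leq w\leq g$.
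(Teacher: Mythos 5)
Your proposal is correct and follows essentially the same route as the paper: reduce to one-sided comparisons through the Lipschitz solution $w$, double variables with a quadratic penalty, use the Lipschitz bound to obtain $d(x_\sigma,y_\sigma)=O(\sigma)$ and hence bounded momenta, pass to the limit in the two viscosity inequalities, and invoke the strict monotonicity of $H$ in $u$. The only cosmetic differences are that the paper first localizes to a Euclidean chart and adds a $C^1$ perturbation to make the maximum strict (so it can prove $f(\hat x)\leq w(\hat x)$ directly), whereas you argue by contradiction and work globally with the squared Riemannian distance and parallel transport of momenta; these are interchangeable variants of the same argument.
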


Although the above comparison theorem is well known, for the reader's convenience, we present here a proof of it. 

\begin{proof} 
We will prove $f\leqslant g$ by  showing  $f\leqslant w$ and $w\leqslant g$. To see this, we only need to prove $f\leqslant w$ since the proof of $w\leqslant g$ is similar.

Let $\hat{x}$ be any maximum point such that $ f(\hat{x})-w(\hat{x})=\max (f-w)$. We need only to show that 
\begin{equation}\label{maxleqzero}
	f(\hat{x})-w(\hat{x})\leqslant 0.
\end{equation}
We select a function $\psi\in C^1(M)$ so that $\psi(\hat x)=0$ and $\psi(x)<0$ for all $x\not =\hat x$. 
Note that the function $f-w+\psi$ has a strict maximum $(f-w)(\hat x)$ at $\hat x$ and that $D\psi(\hat x)=0$.

Let  $D\subset M$ be a closed domain contained in one of the coordinate charts of $M$ and  $\hat{x}\in \text{int}D$. Because of the local nature, we can assume, without loss of generality, that  $D=\overline {B(0,1)}$ is a unit ball in $\R^n$ and $\hat{x}=0$. 

Now let $\alpha>0$, consider the function $\Phi_\alpha:D \times D\to\R$ defined by 
   	$$
   	\Phi_\alpha(x, y):=(f+\psi)(x) -w(y)-\alpha |x-y|^2,
   	$$ 	
and let $(x_\alpha, y_\alpha )\in D$ be a maximum point of $\Phi_\alpha$. From $
   	\Phi_\alpha(x_\alpha ,y_\alpha )\geqslant \Phi_\alpha(x_\alpha ,x_\alpha )$ we get
   \begin{equation*}
   \alpha |x_\alpha-y_\alpha|^2\leqslant w(x_\alpha)-w(y_\alpha)\leqslant \text{Lip}(w)|x_\alpha-y_\alpha|.
   \end{equation*}
    From this, we get 
   \begin{equation}\label{alphabounded}
   	\alpha|x_\alpha-y_\alpha| \leqslant \text{Lip}(w)\quad\text{ and }\quad \lim_{\alpha\to+\infty} (x_{\alpha} -y_{\alpha})=0.
   \end{equation}
   	Hence, by compactness,  there exist a sequence $\{ \alpha_j \}_{j\in\N}$ divergent to $+\infty$ and a point $(\hat{z},\hat{p}) \in D\times\R^n$ such that 
   	$$
   	(x_{\alpha_j}, 2\alpha(x_{\alpha_j} - y_{\alpha_j} ) ) \rightarrow (\hat{z} ,\hat{p} ) \quad 
\text{and }\quad \lim_{j\to+\infty} y_{\alpha_j} = \lim_{j\to+\infty} x_{\alpha_j}=\hat{z}.
   	$$  

Now, since $\Phi_\alpha(x_\alpha,y_\alpha)\geqslant \Phi_\alpha(x,x)=(f+\psi-w)(x)$ for all $x\in D$, we see that 
$$
(f+\psi)(x_\alpha)-w(y_\alpha)\geqslant \Phi_\alpha(x_\alpha,y_\alpha)\geqslant \max_{D}(f+\psi-w),
$$ 
and, moreover, after taking limit for $\alpha=\alpha_j$,  
$$
(f+\psi-w)(\hat z)\geqslant \max_{D}(f+\psi-w). 
$$
This implies that $\hat z=\hat{x}=0$ since $f+\psi-w$ has a strict maximum at $\hat {x}=0$. We may thus assume without loss of generality that $x_{\alpha_j}$ lies in the interior of $D$ for every $j\in\N$. 

Since the function $
f(x)-\big[w(y_{\alpha_j})+\alpha_{j}|x-y_{\alpha_j}|^2-\psi(x)\big]$ of $x$  has a 
maximum at $x_{\alpha_j}$ and the function  
$w(y)-\big[(f+\psi)(x_{\alpha_j})-\alpha_j|y-x_{\alpha_j}|^2\big]$ of $y$ has a minimum at $y_{\alpha_j}$, by the viscosity property of $f$ and $w$, we have 
 $$
   	H\Big(x_{\alpha_j}, 2\alpha_j(x_{\alpha_j}-y_{\alpha_j})
-D\psi(x_{\alpha_j}), f(x_{\alpha_j})\Big)\leqslant 0$$ 
 $$ H\Big(y_{\alpha_j}, 2\alpha_j(x_{\alpha_j}-y_{\alpha_j}), w(y_{\alpha_j})\Big)\geqslant 0.
   	$$
Taking limit in $j$ yields
   	$$
   	H\big(\hat{x},\hat{p}-D\psi(\hat{x}), f(\hat{x})\big)\leqslant 0 \quad \text{and} \quad 
   	H\big(\hat{x},\hat{p}, w(\hat{x})\big) \geqslant 0,
   	$$
and furthermore, since $D\psi(\hat{x})=0$,
\[
H\big(\hat{x},\hat{p},f(\hat{x})\big)\leqslant 0 \leqslant  
H\big(\hat{x},\hat{p},w(\hat{x})\big). 
\]
Now, the strict monotonicity in $u$ of $H(x, p, u)$ implies that $f(\hat{x})\leq w(\hat{x})$. 
Thus, \eqref{maxleqzero} holds.
\end{proof}

Now we can prove the following result:
\begin{The}\label{uniq_equi_Lip}
Let $H^\lambda$ satisfy \mbox{\rm(SH1)--(SH4)}. For each $\lambda\in(0,1]$, equation \eqref{eq_H_lambda} admits a unique continuous viscosity solution $u^\lambda$.  In addition, $u^\lambda$ is Lipschitzian, and the  family $\{u^\lambda: \lambda\in(0,1]\}$ is  equi-bounded and equi-Lipschitzian, namely there exist constants $C_0>0$ and $M_0>0$ which are independent of $\lambda$ such that
\begin{equation}\label{apriori_est}
	\|u^\lambda\|_\infty\leqslant C_0,\quad \textup{Lip}(u^\lambda)\leqslant M_0.
\end{equation}
\end{The}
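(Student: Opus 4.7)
The plan is to proceed in three stages: construct ordered barriers by shifting a viscosity solution of the limit equation $G(x,Du)=c(G)$; obtain a solution of \eqref{eq_H_lambda} between them via Perron's method and deduce Lipschitz regularity and uniqueness from Lemma \ref{sub_is_Lip} and Theorem \ref{The_comp_prin}; and finally convert the pointwise bound into an equi-Lipschitz bound via (SH4) and the coercivity of $G$.

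For the barriers, I would fix a continuous viscosity solution $v$ of $G(x,Du)=c(G)$ (which exists since $G$ satisfies (H1)--(H3) in view of (SH1)--(SH3)) and set $V:=\|v\|_\infty$. I claim that $\underline{u}:=v-V\leq 0$ is a viscosity subsolution and $\overline{u}:=v+V\geq 0$ a viscosity supersolution of \eqref{eq_H_lambda}. Indeed, if $\varphi\in C^1$ touches $\underline{u}$ from above at $x_0$, then $\varphi+V$ touches $v$ from above at $x_0$, so $G(x_0,D\varphi(x_0))\leq c(G)$; since $\underline{u}(x_0)\leq 0$ and $H^\lambda$ is non-decreasing in $u$ with $H^\lambda(x,p,0)=G(x,p)$ by (SH1) and (SH4), one gets $H^\lambda(x_0,D\varphi(x_0),\underline{u}(x_0))\leq G(x_0,D\varphi(x_0))\leq c(G)$; the supersolution check for $\overline{u}$ is symmetric. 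A standard Perron construction then yields a continuous viscosity solution $u^\lambda$ of \eqref{eq_H_lambda} with $\underline{u}\leq u^\lambda\leq \overline{u}$, giving at once the uniform bound $\|u^\lambda\|_\infty\leq 2V=:C_0$. Lemma \ref{sub_is_Lip} upgrades $u^\lambda$ to a Lipschitz function, and uniqueness follows by applying Theorem \ref{The_comp_prin} with $w:=u^\lambda$.

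For the equi-Lipschitz bound I would freeze $u^\lambda(x)$ in the third slot of $H^\lambda$: the function $u^\lambda$ is a viscosity subsolution of $\tilde H^\lambda(x,Du)\leq c(G)$, where $\tilde H^\lambda(x,p):=H^\lambda(x,p,u^\lambda(x))$. Applying (SH4) at level $r=C_0$ and using $|u^\lambda|\leq C_0$ gives $\tilde H^\lambda(x,p)\geq G(x,p)-\kappa_{C_0}^\lambda C_0$, so $u^\lambda$ satisfies $G(x,Du)\leq c(G)+\kappa_{C_0}^\lambda C_0$ in the viscosity sense. Since $\kappa_{C_0}^\lambda\to 0$ as $\lambda\to 0$, one may (after replacing it by a non-decreasing majorant on $(0,1]$) assume $\kappa_{C_0}^\lambda$ is uniformly bounded on $(0,1]$; coupling this with the coercivity of $G$ from (SH3), Proposition \ref{fathisublip} delivers a Lipschitz constant $M_0$ depending only on $C_0$, on that uniform bound, and on the coercivity modulus of $G$—hence independent of $\lambda$. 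The only subtle point I anticipate is precisely this last uniformity in $\lambda$, which is the sole place where the asymptotic behavior $\kappa_{C_0}^\lambda\to 0$ of (SH4) truly enters the argument; everything else is a rather direct invocation of Perron's construction, Proposition \ref{fathisublip}, and the comparison principle already in hand.
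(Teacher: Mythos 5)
Your proposal follows the same route as the paper: shift a Lipschitz solution of $G(x,Du)=c(G)$ by its sup norm to get ordered sub/supersolutions of \eqref{eq_H_lambda}, run Perron between them, use {\rm (SH4)} to absorb the $u$-dependence of $H^\lambda$ into an enlarged right-hand side, invoke the coercivity of $G$ through Proposition \ref{fathisublip} for a $\lambda$-independent Lipschitz bound, and obtain uniqueness from Theorem \ref{The_comp_prin}; you also correctly flag that the single place where $\lambda$-uniformity is subtle is the bound $\max_\lambda \kappa_{C_0}^\lambda<\infty$, which the paper handles with the same tacit assumption. The one ordering point worth tightening: you first assert that ``a standard Perron construction yields a continuous viscosity solution $u^\lambda$'' and only afterward apply Lemma \ref{sub_is_Lip} and the (SH4)/coercivity estimate to $u^\lambda$, but Lemma \ref{sub_is_Lip} requires continuity as input, and Perron by itself only produces semicontinuous envelopes that are sub/supersolutions. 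The paper resolves this by running exactly your (SH4)-plus-coercivity argument on \emph{every} $v$ in the Perron family (each such $v$ satisfies $|v|\leq C_0$ and hence $G(x,Dv)\leq c(G)+\kappa_{C_0}^\lambda C_0$ in the viscosity sense), so that the family is equi-Lipschitz with constant $M_0$ and the supremum $u^\lambda$ is automatically $M_0$-Lipschitz, hence continuous, before invoking Ishii's Perron lemma to conclude it is a solution; transposing your estimate from $u^\lambda$ to the family closes this small gap with no new ideas.
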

\begin{proof}

Under our assumptions, by the choice of $c(G)$ and Lemma \ref{fathisublip}, \eqref{eq_G} always admits  a Lipschitz viscosity solution, denoted by $u_0$. Because $H^\lambda(x,p,0)=G(x,p)$,  by the monotonicity assumption (SH4), $u_0^-:=u_0-\|u_0\|_\infty$  and $u_0^+:=u_0+\|u_0\|_\infty$ are, respectively, a Lipschitz viscosity subsolution and a Lipschitz viscosity supersolution of \eqref{eq_H_lambda}. 
 Fix $\lambda$ and set  
\begin{equation}\label{perron_construction}
u^\lambda(x):=\sup\{v(x):u_0^-\leqslant v\leqslant u_0^+, v \text{~is a continuous subsolution of~} \eqref{eq_H_lambda} \}.
\end{equation}

Notice that if $v$ is a subsolution of \eqref{eq_H_lambda} and $u_0^- \leqslant v \leqslant u_0^+$, then 
$$|v(x)|\leqslant C_0:=2\|u_0\|_\infty,\quad \text{for all} x \in M.$$
Without loss of generality, we assume $\mathbf{K}_0:=\max_{\lambda} \kappa_{C_0}^\lambda$ is finite since $\lim_{\lambda\to 0}\kappa_{C_0}^\lambda=0$,  by assumption \eqref{assumption_lipschitz} in (SH4). 
Now that $ v$ is a subsolution of the equation 
$$ G(x, Dv) = a,
	  $$
with $a=2 \mathbf{K}_0C_0+c(G)$,  $v$ is Lipschitz by Lemma \ref{fathisublip}. 
In addition,  the coercivity of $G$ implies that for some constant 
$N_0 >0$,
$$
 (x, p) \in T_x^*M ,|p|_x>N_0  \Rightarrow G(x, p)>a,
$$
this shows that, for every $x\in M$, $v$ is Lipschitz continuous with Lipschitz bound $N_0$ 
in a neighborhood of $x$.  Since $M$ is compact and connected, there is a constant $M_0$ such that  
$\text{Lip}(v)\leqslant M_0$.  

Thus, by Lemma \ref{sub_is_Lip}, the function $u^\lambda$ constructed in \eqref{perron_construction} is also Lipschitz continuous with $\text{Lip}(u^\lambda)$ $\leqslant M_0$.  Since $u^\lambda$ is defined by the Perron method (see \cite{Ishii1987}),  
it is a viscosity solution of \eqref{eq_H_lambda}. This proves the existence of continuous viscosity solutions of  \eqref{eq_H_lambda}, and the uniform estimates \eqref{apriori_est}.

Finally, since we have shown that \eqref{eq_H_lambda} admits a  Lipschitz viscosity solution,   Theorem  \ref{The_comp_prin} implies that  \eqref{eq_H_lambda} has only one  continuous viscosity solution, which finishes our proof.
\end{proof}

\section{Convergence of the  viscosity solutions}\label{section_main}

In this section, we investigate the asymptotic behavior of  the viscosity solution $u^\lambda$ of  equation \eqref{eq_H_lambda}, we aim to  study whether or not the limit of $u^\lambda$ exists as $\lambda\to 0$, and if the limit exists, what the characterization of this limit is.  

We firstly clarify a useful result on the limiting behavior of solutions of \eqref{eq_H_lambda}, 
which is a direct consequence of Theorem  \ref{uniq_equi_Lip}, Ascoli-Arzel\`a theorem, 
and a well known stability property of viscosity solutions (see e.g. 
\cite{Crandall_Evans_Lions1984, Barles_book}).
 
\begin{Pro}[Stability]\label{Vanishing}
Let $H^\lambda$  satisfy \mbox{\rm{(SH1)--(SH4)}} and $u^\lambda$ be the  viscosity solution of   \eqref{eq_H_lambda}, then the family $\{u^\lambda\}_{\lambda\in(0,1]}$  has a uniformly convergent subsequence. 
Moreover, for any  subsequence $\{u^{\lambda_n}\}_{n}$ which converges uniformly to $u^0$ as $\lambda_n\to 0$, $u^0$ is a viscosity solution of  equation \eqref{eq_G}.
\end{Pro}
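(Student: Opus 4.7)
The plan is to combine the uniform estimates already established in Theorem \ref{uniq_equi_Lip} with the classical stability theorem for viscosity solutions. Two ingredients are needed: compactness of the family $\{u^\lambda\}$ in $C(M)$, and convergence of the Hamiltonians $H^{\lambda_n}(x,p,u^{\lambda_n}(x))$ to $G(x,p)$ in a sense strong enough to pass to the limit in the viscosity inequalities.

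First, I would invoke Theorem \ref{uniq_equi_Lip} directly to obtain that $\{u^\lambda\}_{\lambda\in(0,1]}$ is equi-bounded by $C_0$ and equi-Lipschitz with constant $M_0$. Since $M$ is compact, the Ascoli--Arzel\`a theorem immediately yields a subsequence $\{u^{\lambda_n}\}$ converging uniformly on $M$ to some $u^0\in\Lip(M)$, which gives the first half of the statement.

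For the second half, fix any such uniformly convergent subsequence $u^{\lambda_n}\to u^0$ with $\lambda_n\to 0$. The key preparatory observation is that the functions $(x,p)\mapsto H^{\lambda_n}(x,p,u^{\lambda_n}(x))$ converge to $G(x,p)$ locally uniformly on $T^*M$. Indeed, using (SH1) to write $G(x,p)=H^{\lambda_n}(x,p,0)$ and applying the local Lipschitz estimate \eqref{assumption_lipschitz} from (SH4) together with the equi-bound $\|u^{\lambda_n}\|_\infty\leq C_0$, one gets
\begin{equation*}
\bigl|H^{\lambda_n}(x,p,u^{\lambda_n}(x))-G(x,p)\bigr|\leq \kappa_{C_0}^{\lambda_n}\,C_0,
\end{equation*}
which tends to zero as $n\to\infty$ since (SH4) guarantees $\kappa_{C_0}^\lambda\to 0$; the bound is uniform in $(x,p)$.

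The concluding step is a standard test-function argument, as in \cite{Crandall_Evans_Lions1984, Barles_book}. Given $\varphi\in C^1(M)$ touching $u^0$ from above at $x_0$, uniform convergence $u^{\lambda_n}\to u^0$ produces nearby local maxima $x_n\to x_0$ of $u^{\lambda_n}-\varphi$; the subsolution property of $u^{\lambda_n}$ gives $H^{\lambda_n}(x_n,D\varphi(x_n),u^{\lambda_n}(x_n))\leq c(G)$, and passing to the limit using the locally uniform convergence above together with the continuity of $G$ yields $G(x_0,D\varphi(x_0))\leq c(G)$. The supersolution inequality follows symmetrically. I do not anticipate any genuine obstacle: the only mildly delicate point is decoupling the $u$-dependence of $H^\lambda$ from its $(x,p)$-dependence when passing to the limit, and the vanishing modulus $\kappa_r^\lambda\to 0$ in (SH4) is precisely what is designed to handle this.
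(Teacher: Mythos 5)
Your proposal is correct and follows exactly the route the paper intends: the paper states Proposition \ref{Vanishing} with no detailed proof, citing Theorem \ref{uniq_equi_Lip}, Ascoli--Arzel\`a, and the classical stability property of viscosity solutions, which are precisely the three ingredients you assemble. The only substantive point requiring care is the one you correctly isolate, namely that the uniform bound $\|u^{\lambda_n}\|_\infty\leq C_0$ combined with \eqref{assumption_lipschitz} and $\kappa^{\lambda}_{C_0}\to 0$ gives $|H^{\lambda_n}(x,p,u^{\lambda_n}(x))-G(x,p)|\leq \kappa^{\lambda_n}_{C_0}C_0\to 0$ uniformly, so the $u$-dependence washes out in the limit and the standard test-function argument applies.
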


Now it is natural to ask whether the limit $u^0$ is unique. Before that, we give an  example where the family of solutions does not converge.

\begin{ex}[Non-convergence]\label{ex_nonuni}
Let $G(x,p)\in C(T^*M)$ be  coercive and convex in the fibers. For any continuous viscosity solution $w$ of $G(x,Du)=c(G)$, we choose a family $\{w_{\lambda}\}_\lambda$ of $C^2$ functions 
satisfying $\|w-w_\lambda\|\leqslant \lambda$ and consider the Hamiltonians
$$H^\lambda(x,p,u)=\lambda u+G(x,p)-\lambda w_\lambda(x),\quad \lambda>0$$ 	
Then  $w(x)-\lambda$ and $w(x)+\lambda$ are, respectively, a  subsolution  and a supersolution of the  equation $H^\lambda(x,D u(x),u(x))=c(G)$.
Thus by the comparison principle for discounted equations, 
if $u^\lambda\in C(M)$ is the solution of \eqref{eq_H_lambda}, then 
$$w(x)-\lambda\leqslant u^\lambda(x)\leqslant w(x)+\lambda .$$ As $\lambda$ goes to zero, $u^\lambda$ converges to $w$.

Now  let $f$ and $g$ be two distinct viscosity solutions of the equation $G(x,Du)$ $=c(G)$. We select two families $\{f_{\lambda}\}_\lambda$ and  $\{g_{\lambda}\}_\lambda$ of $C^2$ functions satisfying $\|f-f_\lambda\|\leqslant \lambda$ and $\|g-g_\lambda\|\leqslant \lambda$. Consider the following family of Hamiltonian: 
\begin{equation*}
H^\lambda(x,p,u)=
\begin{cases}
\lambda u+G(x,p)-\lambda f_\lambda(x),  \quad &\text{if~} \lambda\in (\frac{1}{2n+1},\frac{1}{2n}], \\
\lambda u+G(x,p)-\lambda g_\lambda(x),\quad &\text{if~} \lambda\in (\frac{1}{2n+2},\frac{1}{2n+1}] .		
\end{cases}
\end{equation*}	
It is easy to check that $H^\lambda$ satisfies  all requirements in assumptions (SH1)--(SH5) except the hypothesis $H^\lambda(x,p,0)\equiv G(x,p)$. However, $H^\lambda(x,p,0)$ converges uniformly to  $G(x,p)$. By the same argument as above, one can find that,  as $\lambda\to 0$, the limits of  $\{u^\lambda\}_\lambda$ cannot be  unique, one is $f(x)$, the other is $g(x)$.
\end{ex}

Under assumptions (SH2) and (SH3), we can define  the  Lagrangian 
 $L_G: TM\to (-\infty,\,+\infty]$ associated with the Hamiltonian $G$ by
\begin{equation}
	L_G(x,v)=\sup_{p\in T_x^*M}\{\du{p,v}_x-G(x,p)\}.
\end{equation} 
We denote by $\mathfrak{M}(L_G)$  the set of all projected Mather measures, associated with $L_G$, and by $\mathcal{F}(G)$  the set of all viscosity subsolutions $w$ of $G(x, Du)=c(G)$ satisfying
\begin{align*}
	\int_{M} w(x)d\nu(x)\leqslant 0,\quad \forall~\nu\in\mathfrak{M}(L_G).
\end{align*}

Now we are ready to prove the main theorem:
\begin{The}\label{The_uni_sol}
Let $\{H^\lambda\}_{\lambda\in(0,1]}$ be the Hamiltonians satisfying  \mbox{\rm{(SH1)--(SH5)}}. Then   equation \eqref{eq_H_lambda} has a  unique continuous viscosity solution $u^\lambda$,  
which is also Lipschitzian, and the family $\{u^\lambda\}_{\lambda>0}$ converges uniformly, as $\lambda\to 0$, to a single $u^0$ which  is a Lipschitz continuous viscosity solution of \eqref{eq_G}.  Furthermore, the limit $u^0$ is characterized by
\begin{equation}\label{rep_case_G}
	u^0(x)=\sup\limits_{u\in\mathcal{F}(G)} u(x)=\min\limits_{\mu\in \mathfrak{M}(L_G)}\int_{M}h(y,x)\ d\mu(y),
\end{equation}
where  $h(y,x)$ is the Peierls barrier of the Lagrangian $L_G$.
\end{The}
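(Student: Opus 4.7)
The plan is to reduce the convergence problem for \eqref{eq_H_lambda} to the classical result for discounted equations, Theorem~\ref{DFIZ_result}, by sandwiching $u^\lambda$ between Lipschitz solutions of a genuine discounted equation with rate $\delta^\lambda_{R_0}$. Existence, uniqueness and the uniform bounds $\|u^\lambda\|_\infty\leq C_0$, $\Lip(u^\lambda)\leq M_0$ are already provided by Theorem~\ref{uniq_equi_Lip}, and Proposition~\ref{Vanishing} ensures any uniform subsequential limit is a viscosity solution of \eqref{eq_G}. What remains is to pin down the limit uniquely and to identify it via the formula \eqref{rep_case_G}.

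To build the sandwich, fix $R_0\geq \max(C_0,M_0)$ large enough that \eqref{steady_increasing} applies, and write $\delta^\lambda:=\delta^\lambda_{R_0}$, $K^\lambda:=K^\lambda_{R_0}$. The strict monotonicity of $u\mapsto H^\lambda(x,p,u)$ in (SH4), combined with $H^\lambda(x,p,0)=G(x,p)$, forces $H^\lambda(x,p,u)-G(x,p)$ to have the same sign as $u$. Together with the two-sided magnitude bound in (SH5), this yields, for all $(x,p,u)$ with $|u|,|p|_x\leq R_0$,
\begin{equation*}
\bigl|H^\lambda(x,p,u)-G(x,p)-\delta^\lambda u\bigr| \;\leq\; (K^\lambda-\delta^\lambda)|u|.
\end{equation*}
Set $\eta^\lambda:=(K^\lambda-\delta^\lambda)C_0$. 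Since any $C^1$ test function touching a Lipschitz function at a point has gradient bounded at that point by the Lipschitz constant, a $\varphi$ touching $u^\lambda$ at $x_0$ automatically satisfies $|D\varphi(x_0)|_{x_0}\leq M_0\leq R_0$; the pointwise estimate above therefore applies inside the viscosity definition, showing that $u^\lambda$ is a viscosity subsolution of $\delta^\lambda u+G(x,Du)=c(G)+\eta^\lambda$ and a viscosity supersolution of $\delta^\lambda u+G(x,Du)=c(G)-\eta^\lambda$.

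Let $v^\lambda$ be the unique Lipschitz viscosity solution of the discounted equation $\delta^\lambda v+G(x,Dv)=c(G)$ supplied by Theorem~\ref{DFIZ_result} (whose hypotheses (H1)--(H3) hold by (SH1)--(SH3)). Then $v^\lambda\pm\eta^\lambda/\delta^\lambda$ are Lipschitz solutions of the two perturbed discounted equations, so Theorem~\ref{The_comp_prin} (comparison for strictly increasing Hamiltonians admitting a Lipschitz solution) yields
\begin{equation*}
v^\lambda-\frac{\eta^\lambda}{\delta^\lambda} \;\leq\; u^\lambda \;\leq\; v^\lambda+\frac{\eta^\lambda}{\delta^\lambda}.
\end{equation*}
By \eqref{steady_increasing}, $\eta^\lambda/\delta^\lambda = C_0\bigl(K^\lambda/\delta^\lambda-1\bigr)\to 0$. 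Since Theorem~\ref{DFIZ_result} gives $v^\lambda\to u^0$ uniformly with $u^0$ represented by \eqref{rep_case_G}, the sandwich forces $u^\lambda\to u^0$ uniformly as well, completing the proof.

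The main obstacle is the two-sided pointwise comparison of $H^\lambda$ with a genuine discounted Hamiltonian: the ratio condition $\delta^\lambda/K^\lambda\to 1$ in (SH5) is precisely the sharp quantitative hypothesis that makes the effective discount error $\eta^\lambda/\delta^\lambda$ collapse in the limit. Without it, the sandwich leaves a positive gap and uniqueness of the limit can genuinely fail, as suggested by Example~\ref{ex_nonuni}. Once the sandwich is in hand, the comparison principle and Theorem~\ref{DFIZ_result} do all remaining work, and the explicit representation of $u^0$ is simply inherited from the discounted limit.
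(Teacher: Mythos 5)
Your proof is correct and takes essentially the same approach as the paper: both reduce \eqref{eq_H_lambda} to a genuine discounted equation via a pointwise reformulation of (SH5), use Theorem~\ref{The_comp_prin} to sandwich $u^\lambda$, and hand off to Theorem~\ref{DFIZ_result}. The only cosmetic differences are that the paper uses $K^\lambda_{R_0}$ as the discount rate and shifts $u^\lambda$ by a constant to produce sub/super\-solutions of $K^\lambda_{R_0}u+G=c(G)$, whereas you keep $u^\lambda$ fixed, use rate $\delta^\lambda_{R_0}$, and shift the discounted solution $v^\lambda$ instead — equivalent bookkeeping, same mechanism.
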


\begin{proof}
By Theorem \ref{uniq_equi_Lip},  we can fix a positive constant $R_0>0 $ so that
   	$$
   	\max\{ \|u^\lambda \|_\infty ,~\text{Lip}(u^\lambda)\} \leqslant R_0, \quad \text{for all~}  \lambda\in(0,1].
   	$$
Now we claim that the functions $$u_-^\lambda:=u^\lambda -\frac{(K^\lambda_{R_0}-\delta^\lambda_{R_0})R_0}{K_{R_0}^\lambda},\quad u_+^\lambda:=u^\lambda +\frac{(K^\lambda_{R_0}-\delta^\lambda_{R_0})R_0}{K_{R_0}^\lambda}$$ are, respectively, a  subsolution and a supersolution of the discounted equation 
\begin{equation}\label{lambda_disc_eq}
	K_{R_0}^\lambda u+G(x,Du)=c(G),\quad x\in M.
\end{equation}
Indeed, this can be easily deduced from assumption (SH5), i.e. for any $|p|_x\leqslant R_0$ and $|u|\leqslant R_0$,
$$K_{R_0}^\lambda u-(K_{R_0}^\lambda-\delta_{R_0}^\lambda)R_0\leqslant H^\lambda(x,p,u)-G(x,p)\leqslant K_{R_0}^\lambda u+(K_{R_0}^\lambda-\delta_{R_0}^\lambda)R_0.$$ 

On the other hand, under our assumptions,  we know from Theorem \ref{DFIZ_result} that   \eqref{lambda_disc_eq} admits a unique continuous viscosity solution,  denoted by $w^\lambda$. By the comparison principle (Theorem \ref{The_comp_prin}) for the discounted equation \eqref{lambda_disc_eq}, we obtain
$$u_-^\lambda \leqslant w^\lambda\leqslant u_+^\lambda.$$
Condition (SH5)  implies that 
$$\lim_{\lambda\to 0}\frac{(K^\lambda_{R_0}-\delta^\lambda_{R_0})}{K_{R_0}^\lambda}=0,$$
this yields $\lim_{\lambda\to 0} (u^\lambda- w^\lambda)=0$. As $K_{R_0}^\lambda\to 0$, by Theorem \ref{DFIZ_result}, $w^\lambda$ converges uniformly to a unique $u^0$, which is a viscosity solution of \eqref{eq_G}, and  formula \eqref{rep_case_G} is valid. 
The proof is complete. 
\end{proof}

An argument completely different from the above applies to obtain a convergence result when 
the constant functions are viscosity subsolutions of \eqref{eq_G}, and then we could remove assumption (SH5) and obtain a simpler representation formula for the limit $u^0$. 
\begin{The}\label{uni_const}
Let $\{H^\lambda\}_\lambda$  satisfy  \mbox{\rm(SH1)--(SH4)} and assume that  the constant functions are viscosity subsolutions of \eqref{eq_G}. Then   equation \eqref{eq_H_lambda} has a unique continuous viscosity solution $u^\lambda$,  which is also Lipschitzian, the collection $\{u^\lambda\}$ converges 
uniformly, as $\lambda\to 0$,  to a  Lipschitz continuous viscosity solution $u^0$ of \eqref{eq_G}, and 
\begin{equation}
	u^0(x)=\min\limits_{y\in\mathcal{A}} h(y,x),
\end{equation}
where $h(y,x)$ and $\mathcal{A}$ are, respectively, the Peierls barrier and the Aubry set of  $L_G$.
\end{The}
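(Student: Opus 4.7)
The plan is to trap $u^\lambda$ between two explicit barriers built from the constant-subsolution hypothesis and the Peierls barrier of $L_G$, and then to identify the unique limit by means of the uniqueness principle of Proposition~\ref{proper_criti_sol}(4).

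Existence, uniqueness, and a uniform Lipschitz bound for $u^\lambda$ are already supplied by Theorem~\ref{uniq_equi_Lip}, whose proof uses only (SH1)--(SH4). The assumption that constants are subsolutions of \eqref{eq_G} reads $G(x,0)\leq c(G)$, so by (SH1) the constant $0$ is a viscosity subsolution of \eqref{eq_H_lambda}, and Theorem~\ref{The_comp_prin} yields the lower barrier $u^\lambda\geq 0$; moreover Proposition~\ref{proper_criti_sol}(3) applied to the subsolution $0$ forces $h\geq 0$ on $M\times M$. For an upper barrier, I fix $y\in\mathcal{A}$ and $\epsilon>0$: the function $h(y,\cdot)+\epsilon$ is a viscosity solution of \eqref{eq_G} by Proposition~\ref{proper_criti_sol}(2), and since $h(y,\cdot)+\epsilon\geq\epsilon>0$, the strict monotonicity in (SH4) together with $H^\lambda(x,p,0)=G(x,p)$ promotes it to a viscosity supersolution of \eqref{eq_H_lambda}. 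Comparison then yields $u^\lambda\leq h(y,\cdot)+\epsilon$; letting $\epsilon\downarrow 0$ and minimizing over $y\in\mathcal{A}$ I obtain
\[
0\leq u^\lambda(x)\leq v(x):=\min_{y\in\mathcal{A}}h(y,x).
\]

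Next, by Ascoli--Arzel\`a and Proposition~\ref{Vanishing}, every sequence $\lambda_n\downarrow 0$ admits a subsequence along which $u^{\lambda_n}$ converges uniformly to a Lipschitz viscosity solution $u^0$ of \eqref{eq_G}. The sandwich together with $v(y)=h(y,y)=0$ on $\mathcal{A}$ forces $u^0=0$ on $\mathcal{A}$. Applying Proposition~\ref{proper_criti_sol}(4) in both directions shows that any two viscosity solutions of \eqref{eq_G} agreeing on $\mathcal{A}$ are equal on $M$; hence $u^0$ is independent of the subsequence and the whole family $\{u^\lambda\}$ converges uniformly to this $u^0$.

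It remains to identify $u^0$ with $v$. One inequality is immediate from Proposition~\ref{proper_criti_sol}(3): since $u^0(y)=0$ for $y\in\mathcal{A}$, one has $u^0(x)\leq h(y,x)$ for every $y\in\mathcal{A}$, hence $u^0\leq v$. For the reverse inequality it suffices to know that $v$ is itself a viscosity solution of \eqref{eq_G}, because Proposition~\ref{proper_criti_sol}(4) applied to $v$ (as subsolution) and $u^0$ (as supersolution), which agree on $\mathcal{A}$, then yields $v\leq u^0$. That $v$ is a viscosity supersolution is automatic, since it is the infimum of the solutions $h(y,\cdot)$, $y\in\mathcal{A}$. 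The hard part is the subsolution property of $v$: this is not formal in our merely coercive, non-superlinear setting, and is the main obstacle of the proof. I would secure it by identifying $v$ with the value function of the stationary control problem
\[
v(x)=\inf\Bigl\{\int_0^T\![L_G(\xi,\dot\xi)+c(G)]\,ds\;:\;T>0,\ y\in\mathcal{A},\ \xi\in\Gamma^T_{y,x}\Bigr\}
\]
and invoking dynamic programming in the lower-semicontinuous Lagrangian framework of \cite{AAIY2016}; equivalently, by the fact that each $h(y,\cdot)$ with $y\in\mathcal{A}$ is a fixed point of the Lax--Oleinik semigroup and that this semigroup preserves pointwise infima.
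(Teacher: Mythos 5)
Your argument follows the paper's proof closely: the same sandwich $0\le u^\lambda\le\hat u:=\min_{y\in\mathcal A}h(y,\cdot)$ obtained from the comparison principle, the same deduction $u^\lambda\equiv 0$ on $\mathcal A$, and the same appeal to Proposition~\ref{proper_criti_sol}(4) to pin down the limit. Two small remarks. First, the $\epsilon$ in your upper barrier is unnecessary: $\hat u\ge 0$ together with the (non-strict) monotonicity of $u\mapsto H^\lambda(x,p,u)$ and $H^\lambda(x,p,0)=G(x,p)$ already make $\hat u$ a viscosity supersolution of \eqref{eq_H_lambda}, which is how the paper proceeds. Second, you rightly single out the viscosity \emph{subsolution} property of $\hat u$ as the one genuinely nontrivial point; the paper states it with only a bare reference to Proposition~\ref{proper_criti_sol}(2). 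Your control-theoretic sketch can be made to work, but a shorter route, available since (SH2) is in force, is to observe that $h(y,\cdot)=S(y,\cdot)$ for $y\in\mathcal A$ (take $z=y$ in the formula of Proposition~\ref{Pro_rep_h} and use the triangle inequality), so $\hat u$ is the infimum of a compact, equi-Lipschitz family of subsolutions of the \emph{convex} equation \eqref{eq_G}; finite minima of Lipschitz subsolutions of a convex Hamilton--Jacobi equation are again subsolutions, and $\hat u$ is a uniform decreasing limit of such finite minima, hence a subsolution by stability. Modulo this deferred step, your proof is correct and is essentially the paper's.
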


\begin{proof}
The existence and uniqueness of a viscosity solution follow from Theorem \ref{uniq_equi_Lip}. Now, we set
	\begin{equation*}
		\hat{u}(x):=\min\limits_{y\in\mathcal{A}} h(y,x).
	\end{equation*} 
Since constant functions are viscosity subsolutions of \eqref{eq_G}, we see by (3) of Proposition \ref{proper_criti_sol} that $h(y,x)\geqslant 0$ for all $x$, $y$. Hence, in view of (2) of Proposition \ref{proper_criti_sol}, we see immediately  
that $\hat{u}\geqslant 0$ in $M$ and it is a viscosity solution of \eqref{eq_G}. Moreover,  it is easy to see that $\hat{u}$ 
and $0$ are, respectively, a  supersolution and a  subsolution of \eqref{eq_H_lambda} because of the monotonicity of $u\mapsto H^\lambda(x,p,u)$ 
and the identity $H^\lambda(x,p,0)=G(x,p)$. Thus, by the comparison principle,
 \begin{equation*}
	0\leqslant u^\lambda\leqslant \hat{u}.
\end{equation*}
In particular, since $\hat{u}$ equals to zero on the Aubry set  $\mathcal{A}$, this implies 
\begin{equation}\label{a0}
	u^\lambda(x)=0,\quad \forall~x\in \mathcal{A}.
\end{equation}

To finish our proof, it suffices, by Proposition \ref{Vanishing}, to prove that any converging subsequence has the same limit $\hat{u}$. Indeed, if $u^0$ is the limit of a convergent sequence 
$\{u^{\lambda_n}\}_{n}$, then, by \eqref{a0}, $u^0(x)=0$ for all $x\in\mathcal{A}$. Furthermore, 
the assertion (4) of Proposition \ref{proper_criti_sol}, a well known property of Aubry sets, 
ensures  that $u^0=\hat u$. 
\end{proof}

\section{Applications: Nonlinear vanishing discount problem}\label{section_app}

In order to verify that our assumptions in Section \ref{section_main} are reasonable and rational,  
we deal with some nonlinear discounted systems and demonstrate 
the convergence results in this section.

\subsection{Application I}
Firstly, 
we consider a  simple example which is a direct  generalization of the discounted equations.  Suppose that 
\begin{enumerate}[\rm(C1)] 
\item   $H\in C(T^*M)$ is convex and coercive in the fibers.
\item  $f(x, u)\in C^1(M\times\R)$ with  $f_u>0$ and 
\begin{equation}\label{loc_lip}
	\limsup_{|u|\to 0}\frac{|f_u(x,u)-f_u(x,0)|}{|u|}<+\infty, \quad \text{uniformly for~}  x\in M.
\end{equation}
\end{enumerate} 
Let $c=c(G)$ be the critical value of the Hamiltonian $$G(x,p):=f(x,0)+H(x,p).$$
Now for each $\lambda\in (0,1]$, we consider the following equation:
\begin{equation}\label{app3_lambda}
f(x,\lambda u)+H(x,Du)=c, \quad x\in M.
\end{equation}
and
 \begin{equation}\label{app3_0}
 f(x,0)+H(x,Du)=c, \quad x\in M.
 \end{equation} 
 
\begin{The}\label{gen_dis_sys_general}
Under the above assumptions \mbox{\rm(C1)--(C2)},  equation \eqref{app3_lambda} has a unique continuous  viscosity solution  $u^\lambda$, which is also Lipschitz continuous, 
and the convergence   
\begin{equation*}
	u^0(x)=\lim_{\lambda\to 0}u^\lambda(x), \quad \text{uniformly for all~} x\in M.
\end{equation*} 
holds, where the limit $u^0$ is a Lipschitz viscosity solution of \eqref{app3_0}. 

The limit  $u^0$ is characterized by
\begin{equation}\label{gen_dis_sys_general_charac}
	u^0(x)=\sup\limits_{u\in\mathcal{F}(\breve{G})} u(x)=\min\limits_{\mu_*\in \mathfrak{M}(L_{\breve{G}})}\int_{M}\breve{h}(y,x)\ d\mu_*(y),
\end{equation}
where  $\breve{h}$ is the Peierls barrier  associated with the new Hamiltonian $\breve{G}(x,p)=\frac{G(x,p)-c}{f_u(x,0)}.$ 
\end{The}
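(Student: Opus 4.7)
The plan is to reduce the nonlinear problem \eqref{app3_lambda} to a linear discounted equation for which Theorem \ref{DFIZ_result} applies directly. Define
\[
\breve G(x,p):=\frac{G(x,p)-c}{f_u(x,0)}.
\]
By (C2), $f_u(\cdot,0)$ is continuous and strictly positive on the compact manifold $M$, so $m:=\min_{x\in M} f_u(x,0)>0$. Hence $\breve G$ is continuous and inherits convexity and coercivity in the fibers from $G$; moreover $c(\breve G)=0$, since any viscosity (sub)solution of $G(x,Du)=c$ is a viscosity (sub)solution of $\breve G(x,Du)=0$ and vice versa.

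Existence, uniqueness and the equi-Lipschitz bound for $\{u^\lambda\}$ follow from Theorem \ref{uniq_equi_Lip} applied to $H^\lambda(x,p,u):=f(x,\lambda u)+H(x,p)$: assumptions (SH1)--(SH4) are immediate from (C1)--(C2), the local Lipschitz condition \eqref{assumption_lipschitz} being a consequence of $f\in C^1$. Fix the constant $R_0$ from \eqref{apriori_est}, so $\|u^\lambda\|_\infty+\Lip(u^\lambda)\leq R_0$ for all $\lambda\in(0,1]$. The key technical input is the quantitative Taylor expansion coming from \eqref{loc_lip}: there exist $\delta_0>0$ and $C>0$ with $|f_u(x,s)-f_u(x,0)|\leq C|s|$ uniformly for $x\in M$, $|s|\leq\delta_0$. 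For $\lambda$ small enough that $\lambda R_0\leq\delta_0$, the mean value theorem yields
\[
\bigl|f(x,\lambda u)-f(x,0)-\lambda f_u(x,0)\,u\bigr|\leq \tfrac{C}{2}\lambda^2 u^2\leq K_0\lambda^2\qquad\text{for all }|u|\leq R_0,
\]
with $K_0:=CR_0^2/2$. Setting $K_1:=K_0/m$ and dividing the equation for $u^\lambda$ through by $f_u(x,0)$, I will then argue in the viscosity sense that $u^\lambda$ is simultaneously a subsolution of $\lambda u+\breve G(x,Du)=K_1\lambda^2$ and a supersolution of $\lambda u+\breve G(x,Du)=-K_1\lambda^2$.

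By Theorem \ref{DFIZ_result} applied to $\breve G$, the equation $\lambda u+\breve G(x,Du)=0$ has a unique continuous viscosity solution $w^\lambda$, and $w^\lambda$ converges uniformly as $\lambda\to 0$ to a limit $u^0$ represented by \eqref{gen_dis_sys_general_charac}. The constants $w^\lambda\pm K_1\lambda$ solve $\lambda u+\breve G(x,Du)=\pm K_1\lambda^2$ exactly, so the comparison principle Theorem \ref{The_comp_prin} (for the strictly monotone Hamiltonian $(x,p,u)\mapsto \lambda u+\breve G(x,p)$) gives
\[
w^\lambda-K_1\lambda\leq u^\lambda\leq w^\lambda+K_1\lambda\quad\text{on } M,
\]
whence $\|u^\lambda-w^\lambda\|_\infty\leq K_1\lambda\to 0$. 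Combining with $w^\lambda\to u^0$ yields $u^\lambda\to u^0$ uniformly. Since $u^0$ is a Lipschitz solution of $\breve G(x,Du)=0$, it is also a Lipschitz solution of \eqref{app3_0}, and the representation \eqref{gen_dis_sys_general_charac} is inherited from Theorem \ref{DFIZ_result}.

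The main obstacle is the precise verification of the perturbed sub/supersolution property in the viscosity sense: one must take a $C^1$ test function $\varphi$ touching $u^\lambda$ from above at some $x_0$, use that $f(x_0,\lambda u^\lambda(x_0))+H(x_0,D\varphi(x_0))\leq c$ is a \emph{pointwise} inequality at $x_0$, expand $f(x_0,\lambda u^\lambda(x_0))$ via the remainder bound above with $u=u^\lambda(x_0)$ (which lies in $[-R_0,R_0]$ by the a priori bound), and then divide by $f_u(x_0,0)>0$ to land in the perturbed discounted equation for $\breve G$. The analogous step for supersolutions is symmetric. Once this reduction is justified, everything else is a direct appeal to Theorems \ref{uniq_equi_Lip}, \ref{The_comp_prin} and \ref{DFIZ_result}.
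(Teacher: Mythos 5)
Your proposal is correct, and while it shares the key conceptual step with the paper (reducing \eqref{app3_lambda} to the linear discounted equation for $\breve{G}(x,p)=\frac{G(x,p)-c}{f_u(x,0)}$ and invoking Theorem \ref{DFIZ_result}), the technical route is genuinely different. The paper introduces the normalized family $\breve{H}^\lambda(x,p,u)=\frac{f(x,\lambda u)+H(x,p)-c}{f_u(x,0)}$, verifies that it satisfies the abstract hypotheses (SH1)--(SH5) (the verification of (SH5) being the nontrivial part, via computing $\breve{H}^\lambda_u=\lambda f_u(x,\lambda u)/f_u(x,0)$ and using \eqref{loc_lip}), and then applies the general Theorem \ref{The_uni_sol}, inside which the comparison is made against the solution of $K_{R_0}^\lambda u+\breve{G}(x,Du)=0$ with error term $\frac{(K_{R_0}^\lambda-\delta_{R_0}^\lambda)R_0}{K_{R_0}^\lambda}$. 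You instead bypass the abstract framework: a direct Taylor expansion of $f(x,\cdot)$ at $0$, made quantitative by \eqref{loc_lip} and the a priori bound $\|u^\lambda\|_\infty\leq R_0$, shows that $u^\lambda$ is a viscosity sub/supersolution of $\lambda u+\breve{G}(x,Du)=\pm K_1\lambda^2$, and comparison with $w^\lambda\pm K_1\lambda$ (where $w^\lambda$ solves $\lambda u+\breve{G}(x,Du)=0$) gives $\|u^\lambda-w^\lambda\|_\infty\leq K_1\lambda$. Your version is arguably cleaner and more self-contained for this specific Hamiltonian, yields an explicit $O(\lambda)$ rate for $u^\lambda-w^\lambda$, and keeps the discount parameter as $\lambda$ itself rather than $K_{R_0}^\lambda$; the paper's version has the advantage of being a corollary of the general Main Result~1 and so also works verbatim for Application~II, where the $p$-dependence of $H_u(x,p,0)$ makes a direct Taylor argument less transparent. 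One small point worth spelling out in a final write-up, which you do flag, is that the viscosity sub/supersolution verification only uses the pointwise identity $\varphi(x_0)=u^\lambda(x_0)$ before dividing by $f_u(x_0,0)>0$, so no extra care is needed; also note that $\breve{G}$ inherits (H1)--(H3) from $G$ automatically here precisely because $f_u(x,0)$ is independent of $p$ (this is why the analogous hypothesis must be imposed explicitly in (D2) of Application~II).
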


\begin{proof}
We apply Theorem  \ref{The_uni_sol} 
to prove the theorem.  
For each $\lambda>0$, set  $H^\lambda(x,p,u)$ $=f(x,\lambda u)+H(x,p)$. Since  
$f_u(x,0)>0$, one can easily observe that $u^\lambda$ is a viscosity solution of 
\eqref{app3_lambda}
if and only if $u^\lambda$ is a viscosity solution of 
\begin{equation}\label{reduce_gen1}
	\frac{H^\lambda(x,Du,u)-c}{f_u(x,0)}=0.
\end{equation}
Similarly,  $u$ is a viscosity solution of  \eqref{app3_0}
if and only if $u$ is a viscosity solution of the equation 
\begin{equation}\label{reduce_gen2}
\frac{G(x,Du)-c}{f_u(x,0)}=0.
\end{equation}
This leads us to consider a new family of continuous Hamiltonians 
$$
\breve{H}^\lambda(x,p,u)=\frac{H^\lambda(x,p,u)-c}{f_u(x,0)}.
$$
and the corresponding
\begin{equation}\label{newG}
	\breve{G}(x,p)=\frac{G(x,p)-c}{f_u(x,0)}.
\end{equation}
The partial derivative of $\breve{H}^\lambda$ with respect to $u$ is 
\begin{equation*}
	\breve{H}^\lambda_u=\frac{H^\lambda_u}{f_u(x,0)}=\frac{\lambda f_u(x,\lambda u)}{f_u(x,0)}>0.
\end{equation*}
Thus one can easily find that  $\breve{H}^\lambda$ satisfies all  assumptions \mbox{\rm{(SH1)--(SH4)}}. 

It only remains to check  assumption (SH5). For every  $R>0$,  let $$ \delta^\lambda_R :=\min\limits_{\substack{(x,p)\in T^*M\\|u|\leqslant R}} \breve{H}^\lambda_u(x,p,u), \quad K_R^\lambda :=\max\limits_{\substack{(x,p)\in T^*M\\|u|\leqslant R}} \breve{H}^\lambda_u(x,p,u).$$
It suffices to prove that
\begin{equation}
	\lim\limits_{\lambda\to 0}\frac{\delta^\lambda_R}{K^\lambda_R}=1, \quad \text{for every~} R>0.
\end{equation}
Indeed,  from \eqref{loc_lip} we know that there exist $\epsilon>0$ and $B>0$ such that
\begin{equation*}
	|f_u(x,u)-f_u(x,0)|\leqslant B|u|, \quad\text{for all}~ x\in M, |u|\leqslant \epsilon,
\end{equation*}
then as long as $\lambda$ is small enough so that $\lambda R<\epsilon$, we have
\begin{equation*}
	 \lambda\bigg( 1-\frac{\lambda RB}{f_u(x,0)}\bigg)\leqslant\breve{H}^\lambda_u(x,p,u)= \frac{\lambda f_u(x,\lambda u)}{f_u(x,0)}\leqslant \lambda\bigg( 1+\frac{\lambda RB}{f_u(x,0)}\bigg), \quad \forall ~|u|\leqslant R.
\end{equation*}
 
Hence, if we denote $a=\min\limits_{x}f_u(x,0)$, then for $\lambda\ll 1$,
$$\delta^\lambda_R\geqslant\lambda\big(1-\frac{\lambda RB}{a}\big), \quad K^\lambda_R\leqslant \lambda\big(1+\frac{\lambda RB}{a}\big). $$
this leads to
$$
1=\liminf\limits_{\lambda\to 0}\frac{ 1-\frac{\lambda RB}{a}}{ 1+\frac{\lambda RB}{a}}\leqslant \liminf\limits_{\lambda\to 0}\frac{\delta^\lambda_R}{K^\lambda_R}\leqslant \limsup\limits_{\lambda\to 0}\frac{\delta^\lambda_R}{K^\lambda_R}\leqslant 1,
$$
which proves  assumption  (SH5).  
Thus, by Theorem \ref{The_uni_sol},  the   equation $$\breve{H}^\lambda(x,Du,u)=0$$ admits a unique  continuous viscosity solution $u^\lambda$, and $u^\lambda$ uniformly converges, as $\lambda \to 0$, to a unique $u^0$, which is a continuous viscosity solution of $\breve{G}(x,Du)=0$. 
Theorem \ref{The_uni_sol}, moreover, guarantees that  \eqref{gen_dis_sys_general_charac} is valid. 
\end{proof}

Notice that the limit $u^0$ in Theorem \ref{gen_dis_sys_general} is characterized by a new Hamiltonian $\breve{G}$. A natural question is: Can we represent the limit $u^0$ in terms of the original Hamiltonian $G$?
To answer 
this question, 
we analyze the dynamical links   between  $\breve{G}$ and  $G$.  

\begin{Pro}\label{Pro_trans_equi}
Let  $H\in C(T^*M)$ be convex and coercive in the fibers with critical value $c$, and $f\in C^1(M)$ with $f>0$. Set
\begin{align*}
	\breve{H}(x, p):=\frac{H(x, p)-c}{f(x)},\quad x\in M, p\in T^*_xM. 
\end{align*}
Let $L$ and $\breve{L}$ be the Lagrangians associated with $H$ and $\breve{H}$, respectively. 
\begin{enumerate}[\rm(1)]
	\item Let  $\breve{h}$ and $h$ be the Peierls barriers of the Lagrangians $\breve{L}$ and $L$,  respectively. Then 
	\begin{equation}\label{P_M_equ}
		\breve{h}(y, x)=h(y, x).
	\end{equation}
In particular, the projected  Aubry sets  $\mathcal{A}(\breve{L})$ and $\mathcal{A}(L)$ are identical.
 \item There is a one-to-one correspondence between the Mather measures 
 $\widetilde{\mathfrak{M}}(L)$ and $\widetilde{\mathfrak{M}}(\breve{L})$ associated with $L$ and $\breve L$, respectively. More precisely, for any $\tilde{\mu}\in \widetilde{\mathfrak{M}}(L)$, if the Borel measure  
$\tilde{\mu}_*$ is defined  by  
 \begin{equation}\label{meas_1}
 	\int\psi(x, v)\ d\tilde{\mu}_*:= \frac{1}{\int f(x)\ d\tilde{\mu}}\int \psi\bigg(x,\frac{v}{f(x)}\bigg)f(x)\ d\tilde{\mu}, \quad \forall~\psi\in C_c(TM),
 \end{equation} 
then $\tilde{\mu}_*\in \widetilde{\mathfrak{M}}(\breve{L})$. 
Conversely, for any  $\tilde{\mu}_*\in \widetilde{\mathfrak{M}}(\breve{L})$, 
if $\tilde{\mu}$ is defined by 
 \begin{equation}\label{meas_2}
 	\int\psi(x, v)\ d\tilde{\mu}:= \frac{1}{\int 1/f(x)\ d\tilde{\mu}_*}\int\psi(x, f(x)v)/ f(x)\ d\tilde{\mu}_*, \quad \forall~\psi\in C_c(TM).
 \end{equation}
then $\tilde\mu\in \widetilde{\mathfrak{M}}(L)$. 
\end{enumerate}
\end{Pro}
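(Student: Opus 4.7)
My plan is to exploit the simple observation that multiplying the equation $H(x,p)=c$ through by $1/f(x)>0$ produces the equation $\breve H(x,p)=0$, so that the two equations have exactly the same viscosity sub/super-solutions, while the Lagrangian side transforms by a clean rescaling.

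First I would establish the fundamental identities. Since $f>0$ and $\breve H(x,p)=(H(x,p)-c)/f(x)$, for any $C^1$ test function $\varphi$ and any point $x_0$ we have $H(x_0,D\varphi(x_0))\leq c$ iff $\breve H(x_0,D\varphi(x_0))\leq 0$, with the analogous equivalence for $\geq$ and $=$. Thus the collections of viscosity subsolutions and supersolutions of $H(x,Du)=c$ and of $\breve H(x,Du)=0$ coincide, so in particular $c(\breve H)=0$. On the Lagrangian side, a direct application of the Legendre transform yields
\[
\breve L(x,v)=\sup_{p}\bigl\{\langle p,v\rangle_x-\breve H(x,p)\bigr\}=\frac{L(x,f(x)v)+c}{f(x)}.
\]

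For part (1), since subsolutions coincide, the semi-distance $S$ from Proposition \ref{semi-distance} is the same for $H-c$ and for $\breve H$. Moreover Proposition \ref{Pro_rep_h} characterizes $z\in\mathcal{A}$ by the property that $x\mapsto S(z,x)$ be a viscosity solution of $H(x,Du)=c$, and this characterization transfers verbatim to $\breve H$, so $\mathcal{A}(L)=\mathcal{A}(\breve L)$. Finally, the representation $h(x,y)=\inf_{z\in\mathcal{A}}[S(x,z)+S(z,y)]$ from the same proposition, applied to both Lagrangians, gives $\breve h(y,x)=h(y,x)$.

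For part (2), starting from $\tilde\mu\in\widetilde{\mathfrak M}(L)$ I would verify the four properties of $\tilde\mu_*$ defined by \eqref{meas_1}. With $Z:=\int f\,d\tilde\mu$, taking $\psi\equiv 1$ shows $\tilde\mu_*$ is a probability measure, and the substitution $v\mapsto v/f(x)$ in the integrand of \eqref{meas_1} followed by the identity $f(x)\breve L(x,v/f(x))=L(x,v)+c$ gives
\[
\int_{TM}\breve L\,d\tilde\mu_* =\frac{1}{Z}\int_{TM}[L(x,v)+c]\,d\tilde\mu=0=-c(\breve H),
\]
while the same trick applied to $\psi(x,v)=\langle D\varphi(x),v\rangle_x$ cancels the factors of $f(x)$ and reduces the closedness condition for $\tilde\mu_*$ to that for $\tilde\mu$; the integrability $\int|v|_x\,d\tilde\mu_*<\infty$ follows identically. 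The converse direction uses \eqref{meas_2} with normalizing constant $Z':=\int 1/f\,d\tilde\mu_*$ and the dual computation $\int L\,d\tilde\mu=\frac{1}{Z'}\int[\breve L(x,v)f(x)-c]\cdot\frac{1}{f(x)}\,d\tilde\mu_*=-c$. To confirm bijectivity I would compose the two maps: the normalization $Z\cdot\int 1/f\,d\tilde\mu_*=1$ pops out from $\int 1/f\,d\tilde\mu_*=\frac{1}{Z}\int 1\,d\tilde\mu=1/Z$, after which another substitution $v\mapsto f(x)v\mapsto v$ returns the original measure.

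The computations are all elementary changes of variables, so there is no deep obstacle; the only point requiring a little care is keeping track of the normalizing constants $Z$ and $Z'$ and checking that the reparametrization is consistent with the Lagrangian possibly taking the value $+\infty$—but since Mather measures concentrate where $L<+\infty$ $\tilde\mu$-a.e., the identity $\breve L(x,v/f(x))f(x)=L(x,v)+c$ is unambiguous on the relevant set.
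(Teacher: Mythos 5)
Your proof is correct and follows essentially the same route as the paper: both identify $\mathcal{S}(H-c)=\mathcal{S}(\breve H)$, derive $\breve L(x,v)=\frac{L(x,f(x)v)+c}{f(x)}$, use the semi-distance $S$ together with Proposition~\ref{Pro_rep_h} to obtain $h=\breve h$ and $\mathcal{A}(L)=\mathcal{A}(\breve L)$, and then verify by the same change-of-variables computation that \eqref{meas_1} and \eqref{meas_2} preserve closedness, the normalization, and the minimality $\int L\,d\tilde\mu=-c(H)$. The only addition you make is the explicit check that the two maps compose to the identity (via $Z\cdot Z'=1$), which the paper leaves implicit.
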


\begin{proof} 
It is immediate to see that if $u\in C(M)$ is a viscosity solution of  $H(x, D u)=c$, then 
it is also a viscosity solution of  $\breve{H}(x, Du)=0$.
Since $c$ is the critical value of $H$, this means that $0$ is the critical value of $\breve{H}$. 

A simple manipulation shows that 
\[
\breve L(x,v)=\frac{L(x,f(x)v)+c}{f(x)}, \ \ \ x\in M,\,v\in T_xM. 
\]

\textbf{(1) } 
Now, we  define the semi-distances 
$S\in C(M\tim M)$ and $\breve S\in C(M\tim M)$, associated with the Hamiltonians $H-c$ and $\breve H$, respectively, by 
\[\begin{aligned}
S(y,x)&\,=\sup\{\psi(x)-\psi(y)\mid \psi\in\cS(H-c)\},
\\ 
\breve S(y,x)
&\,=\sup\{\psi(x)-\psi(y)\mid \psi\in\cS(\breve H)\}.
\end{aligned}
\] 
It is clear that $\cS(H-c)=\cS(\breve H)$, hence 
$S=\breve S$.   Thus, by Proposition \ref{Pro_rep_h}, we see that $h=\breve h$. 
Furthermore, we have $\mathcal{A}(\breve{L})=\{x\in M : \breve{h}(x, x)=0\}=\{x\in M : h(x, x)=0\}=\mathcal{A}(L)$. 		
	
\textbf{(2) } If $\tilde{\mu}\in\widetilde{\mathfrak{M}}(L)$, then the measure $\tilde{\mu}_*$, defined by \eqref{meas_1}, is  a Borel probability measure. Because $\tilde{\mu}$ is closed, then  $\tilde{\mu}_*$ is also a closed measure since
 \begin{equation*}
 	\int \langle D\psi,v\rangle_x\ d\tilde{\mu}_*= \frac{\int \langle D\psi,v\rangle_x\ d\tilde{\mu}}{\int f(x)\ d\tilde{\mu}}=0, \quad \forall~\psi\in C^1(M).
 \end{equation*}
As the critical value of $\breve{L}$ is zero, now it only remains to check that $\int \tilde{L}\ d\tilde{\mu}_*=0$. Indeed, 
  \begin{equation*}
 	\int \tilde{L}(x,v)\ d\tilde{\mu}_*= \frac{\int L(x,v)+c\ d\tilde{\mu}}{\int f(x)\ d\tilde{\mu}}=0,
 \end{equation*}
which yields $\tilde\mu\in \widetilde{\mathfrak{M}}(L)$. Similarly, one can prove that if $\tilde\mu_*\in \widetilde{\mathfrak{M}}(\breve{L})$, then $\tilde{\mu}\in\widetilde{\mathfrak{M}}(L)$ with $\tilde{\mu}$  defined as \eqref{meas_2}. Thus there is a one-to-one correspondence between $\widetilde{\mathfrak{M}}(L)$ and $\widetilde{\mathfrak{M}}(\breve{L})$.
\end{proof}

\begin{The}\label{equivalence}
Under the same assumptions as in Theorem \ref{gen_dis_sys_general}, the limit  $u^0$  in \eqref{gen_dis_sys_general_charac} is also characterized by 	
\begin{align*}
	u^0(x)=\sup\limits_{w\in\mathcal{FR}(G)} w(x)=\min\limits_{\mu\in \mathfrak{M}(L_G)}\frac{\int_{M}f_u(y,0)h(y,x)\ d\mu(y)}{\int_{M}f_u(y,0)\ d\mu(y)},
\end{align*}
where $\mathcal{FR}(G)$ is the set of all viscosity subsolutions $w$ of \eqref{app3_0} satisfying 
\begin{equation*}
	\int_{M} f_u(y,0)w(y)d\mu(y)\leqslant 0,\quad \forall~\mu\in\mathfrak{M}(L_G).
\end{equation*} 
\end{The}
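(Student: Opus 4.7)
The plan is to deduce the new representation by transporting the formulas of Theorem~\ref{gen_dis_sys_general} through the change of Hamiltonian $H\mapsto \breve H$ used in its proof, with the help of the dynamical correspondences established in Proposition~\ref{Pro_trans_equi} applied with $f(x)=f_u(x,0)>0$. Concretely, I will show $\mathcal{F}(\breve G)=\mathcal{FR}(G)$ and that the bijection between projected Mather measures converts the $\mu_*$-integral formula in \eqref{gen_dis_sys_general_charac} into the claimed $\mu$-integral formula.

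First I would note that, since $f_u(\cdot,0)>0$, a function is a viscosity subsolution (resp.\ supersolution) of $\breve G(x,Du)=0$ if and only if it is a viscosity subsolution (resp.\ supersolution) of $G(x,Du)=c$, that is, of \eqref{app3_0}. By Proposition~\ref{Pro_trans_equi}(1), the Peierls barriers satisfy $\breve h\equiv h$, and the projected Aubry sets coincide.

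Next I would translate the set of projected Mather measures. Applying $\pi_\#$ to formula \eqref{meas_1} (with $f\leadsto f_u(\cdot,0)$) and testing against $\psi(x,v)=\phi(x)$ gives, for every $\tilde\mu\in\widetilde{\mathfrak M}(L_G)$ with $\mu=\pi_\#\tilde\mu$, the projected Mather measure
\[
d\mu_*(x)=\frac{f_u(x,0)\,d\mu(x)}{\int_M f_u(y,0)\,d\mu(y)}\in\mathfrak M(L_{\breve G}),
\]
and Proposition~\ref{Pro_trans_equi}(2) makes $\mu\mapsto\mu_*$ a bijection between $\mathfrak M(L_G)$ and $\mathfrak M(L_{\breve G})$. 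Substituting this density and $\breve h=h$ into \eqref{gen_dis_sys_general_charac} yields
\[
\int_M \breve h(y,x)\,d\mu_*(y)=\frac{\int_M f_u(y,0)h(y,x)\,d\mu(y)}{\int_M f_u(y,0)\,d\mu(y)},
\]
so the minimum over $\mu_*\in\mathfrak M(L_{\breve G})$ equals the minimum over $\mu\in\mathfrak M(L_G)$ of the right-hand side, which is the second equality in the claim.

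Finally, for the first equality, I would rewrite the integral defining $\mathcal{F}(\breve G)$ using the same density: for any continuous function $w$ and any $\mu\in\mathfrak M(L_G)$,
\[
\int_M w(y)\,d\mu_*(y)=\frac{\int_M f_u(y,0)w(y)\,d\mu(y)}{\int_M f_u(y,0)\,d\mu(y)},
\]
so the condition $\int_M w\,d\mu_*\le 0$ for all $\mu_*\in\mathfrak M(L_{\breve G})$ is equivalent, via the bijection above, to $\int_M f_u(y,0)w(y)\,d\mu(y)\le 0$ for all $\mu\in\mathfrak M(L_G)$. Combined with the identification of subsolutions of $\breve G(x,Du)=0$ with those of \eqref{app3_0}, this gives $\mathcal F(\breve G)=\mathcal{FR}(G)$, hence the first equality $u^0(x)=\sup_{w\in\mathcal{FR}(G)}w(x)$. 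There is no substantive obstacle here; the only point requiring care is to verify that $\pi_\#$ commutes with the transformation \eqref{meas_1}, which is a one-line computation by testing against functions depending only on $x$.
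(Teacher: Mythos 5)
Your proposal is correct and follows essentially the same route as the paper's own proof: identify subsolutions of $\breve G(x,Du)=0$ with those of \eqref{app3_0}, invoke Proposition~\ref{Pro_trans_equi} to get $\breve h=h$ and the bijection between Mather measures, and push \eqref{meas_1} forward by $\pi_\#$ to read off $d\mu_*=f_u(\cdot,0)\,d\mu/\int f_u(\cdot,0)\,d\mu$. The only difference is cosmetic: you make the projection computation explicit, whereas the paper uses it silently when passing from $\tilde\mu$ to $\mu$.
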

\begin{proof}
If $L_G$ is the Lagrangian associated with $G=f(x,0)+H(x, p)$, then the Lagrangian  associated with $\breve{G}(x,p)=\frac{G(x,p)-c}{f_u(x,0)}$ is
$$\breve{L}_G(x,v)=\frac{L_G(x,f_u(x,0)v)+c}{f_u(x,0)}.$$ 
By  Proposition \ref{Pro_trans_equi}  and \eqref{gen_dis_sys_general_charac} in Theorem \ref{gen_dis_sys_general}, we easily get
\begin{equation*}
	\begin{split}
		\min\limits_{\mu_*\in \mathfrak{M}(\breve{G})}\int_{M}\breve{h}(y,x)\ d\mu_*(y)&=\min\limits_{\mu_*\in \mathfrak{M}(L_{\breve{G}})}\int_{M}h(y,x)\ d\mu_*(y)\\
		&=\min\limits_{\mu\in \mathfrak{M}(L_G)}\frac{\int_{M}f_u(y,0)h(y,x)\ d\mu(y)}{\int_{M}f_u(y,0)\ d\mu(y)}.
	\end{split}
\end{equation*}

On the other hand, $w$ is a viscosity subsolution of  \eqref{app3_0} if and only if $w$ is a viscosity subsolution of  $\breve{G}(x,Du)=0$. Thus, if $w\in\mathcal{F}(\breve{G})$, i.e.  $\int_{M} w(y)d\mu_*(y)\leqslant 0$ for all $\mu_*\in\mathfrak{M}(L_{\breve{G}})$, then by Proposition \ref{Pro_trans_equi} again, 
$$
\frac{\int_{M} f_u(y,0)w(y)d\mu(y)}{\int_{M} f_u(y,0)d\mu(y)}\leqslant 0,\quad \forall~\mu\in \mathfrak{M}(L_G).
$$
Equivalently, we have that
$$
\int_{M} f_u(y,0)w(y)d\mu(y)\leqslant 0,\quad \forall~\mu\in \mathfrak{M}(L_G),
$$
since $\int_{M} f_u(y,0)d\mu(y)>0$. This leads to
\begin{equation*}
u^0=\sup\limits_{w\in\mathcal{F}(\breve{G})} w(x)=\sup\limits_{w\in\mathcal{FR}(G)} w(x). \qedhere
\end{equation*} 
\end{proof}

\subsection{Application II}
Now we study a more general class of systems, let $H=H(x,p,u)$ and $G(x,p):=H(x,p,0)$ be 
the Hamiltonians  satisfying
\begin{enumerate}[\rm(D1)]
\item $H\in C(T^*M\times\R)$, the partial derivative $H_u\in C(T^*M)$ with  $H_u>0$. For any $R>0$, there exists  a constant $B_R>0$ so that
\begin{equation}\label{AppIIC0}
	|H(x,p,u)-H(x,p,0)|\leqslant B_R|u|,\quad
	 \text{for all~} |u|\leqslant R.
    \end{equation}
    and
    \begin{equation}\label{AppIIC1}
    	|H_u(x,p,u)-H_u(x,p,0)|\leqslant B_R|u|, \quad\text{for all~} |p|_x\leqslant R, |u|\leqslant R.
    \end{equation}
  \item $G(x,p)$ and $\frac{G(x, p)-c}{H_u(x,p,0)}$ is convex and coercive in the fibers.   
\end{enumerate}  
Let $c=c(G)$ be  the critical value of $G$, for each $\lambda>0$, we consider the equations
\begin{equation}\label{app4_lambda}
	H(x, Du, \lambda u)=c, \quad x\in M.
\end{equation}
and
 \begin{equation}\label{app4_0}
H(x, Du, 0)=c, \quad x\in M.
 \end{equation} 
For example, we can take $$H(x,p,u)=u+\frac{1}{2} \cos^2u\cdot\sin p+\frac{p^2}{2}+V(x)$$ where $(x,p)\in\T^n\times\R^n$ and $ \delta\ll 1.$ Notice that $\frac{1}{2}\leqslant H_u\leqslant \frac{3}{2}$ and $H_u(x,p,0)=1$.

\begin{The}\label{gen_dis_sys_gg}
Under assumptions \mbox{\rm(D1)--(D2)} above, equation \eqref{app4_lambda} has a unique continuous viscosity solution $u^\lambda$, which is also Lipschitz continuous,  
and the convergence  
\begin{equation*}
	u^0(x)=\lim_{\lambda\to 0}u^\lambda(x), \quad \text{uniformly for all~} x\in M,
\end{equation*} 
holds. Moreover, the limit function  $u_0$ is a Lipschitz viscosity solution of \eqref{app4_0} and it is characterized by  
\begin{equation}\label{gen_dis_sys_gg_charac}
	u^0(x)=\sup\limits_{u\in\mathcal{F}(\breve{G})} u(x)=\min\limits_{\mu_*\in \mathfrak{M}(L_{\breve{G}})}\int_{M}\breve{h}(y,x)\ d\mu_*(y),
\end{equation}
where  $\breve{h}(y,x)$ is the Peierls barrier of the Lagrangian $L_{\breve{G}}$ associated with the Hamiltonian 
$\breve{G}(x,p)$ $=\frac{G(x,p)-c}{H_u(x,p,0)}.$
\end{The}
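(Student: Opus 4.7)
The plan is to reduce Theorem~\ref{gen_dis_sys_gg} to Main Result~1 (Theorem~\ref{The_uni_sol}) by the same device used in the proof of Theorem~\ref{gen_dis_sys_general}, namely dividing through by the positive continuous factor $H_u(x,p,0)$. Concretely, I set
\[
\breve{H}^\lambda(x,p,u):=\frac{H(x,p,\lambda u)-c}{H_u(x,p,0)},
\]
and note that because $H_u(x,p,0)>0$ and continuous on $T^*M$, a continuous function $u^\lambda$ is a viscosity (sub/super)solution of \eqref{app4_lambda} if and only if it is one for $\breve{H}^\lambda(x,Du,u)=0$; the analogous statement holds for \eqref{app4_0} and $\breve{G}(x,Du)=0$, and $0$ is the critical value of $\breve{G}$. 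Once Theorem~\ref{The_uni_sol} is applied to $\{\breve{H}^\lambda\}$, the conclusion of Theorem~\ref{gen_dis_sys_gg} together with the representation \eqref{gen_dis_sys_gg_charac} follows directly.

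The verification of (SH1)--(SH5) for $\breve{H}^\lambda$ proceeds as follows. (SH1) is immediate from the identity $\breve{H}^\lambda(x,p,0)=\breve{G}(x,p)$. Assumptions (SH2) and (SH3) come straight from (D2). For (SH4), strict monotonicity is $\breve{H}^\lambda_u(x,p,u)=\lambda H_u(x,p,\lambda u)/H_u(x,p,0)>0$, and the local Lipschitz estimate in $u$ follows from \eqref{AppIIC0} after dividing through by $H_u(x,p,0)$. For (SH5), the key computation using \eqref{AppIIC1} is
\[
\bigl|\breve{H}^\lambda_u(x,p,u)-\lambda\bigr|=\frac{\lambda\,\bigl|H_u(x,p,\lambda u)-H_u(x,p,0)\bigr|}{H_u(x,p,0)}\leq\frac{B_R\,\lambda^2 R}{H_u(x,p,0)},\qquad |p|_x\leq R,\;|u|\leq R,
\]
so that on the compact set $\{|p|_x\leq R\}$, where $H_u(\cdot,\cdot,0)$ has a positive minimum $m_R>0$, both $\delta^\lambda_R$ and $K^\lambda_R$ are of the form $\lambda(1+O(\lambda))$ and therefore $\delta^\lambda_R/K^\lambda_R\to 1$ as $\lambda\to 0$.

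The main technical obstacle I anticipate is that the local Lipschitz clause of (SH4) is phrased globally in $p$, whereas the estimate just written involves $1/H_u(x,p,0)$, which (D1)--(D2) only control pointwise. To bridge this I would argue as in Lemma~\ref{sub_is_Lip} and Theorem~\ref{uniq_equi_Lip}: continuous subsolutions of $\breve{H}^\lambda(x,Du,u)=0$ are uniformly bounded in $L^\infty$ by the monotonicity comparison with critical sub- and supersolutions of $\breve{G}(x,Du)=0$, and the fiber coercivity of $\breve{G}$ coming from (D2) combined with Proposition~\ref{fathisublip} then upgrades them to be equi-Lipschitz with a common bound $R_0$ independent of $\lambda$; consequently the Perron construction and the comparison argument only probe the values of $\breve{H}^\lambda$ on the compact region $\{|p|_x\leq R_0\}$, where $H_u(\cdot,\cdot,0)\geq m_{R_0}>0$ and every constant needed in (SH4)--(SH5) is uniform. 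With this localization done, Theorem~\ref{The_uni_sol} applies to $\{\breve{H}^\lambda\}$ and the full statement of Theorem~\ref{gen_dis_sys_gg}, including \eqref{gen_dis_sys_gg_charac}, is obtained by direct transcription.
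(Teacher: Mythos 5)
Your overall plan is correct and matches the paper's strategy: divide by the positive factor $H_u(x,p,0)$, form $\breve{H}^\lambda$, and reduce to the linearly discounted case by a sandwiching argument. You also correctly flag the one genuine obstacle, namely that (SH4) as written is global in $p$, while the only control on $1/H_u(x,p,0)$ is pointwise. The paper sidesteps this cleanly, and your fix needs one more turn of the screw to be airtight.

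The paper's resolution is to verify (SH1)--(SH4) for the \emph{undivided} family $H^\lambda(x,p,u):=H(x,p,\lambda u)$, which follows immediately from (D1)--(D2) with no $1/H_u$ factor appearing, and then to invoke Theorem \ref{uniq_equi_Lip} for $H^\lambda$ to obtain the uniform bound $\max\{\|u^\lambda\|_\infty,\Lip(u^\lambda)\}\leq R_0$. Only afterwards does it pass to $\breve{H}^\lambda$, where the ratio $\delta^\lambda_{R_0}/K^\lambda_{R_0}\to1$ is checked exactly on the compact set $\{|p|_x\leq R_0,\ |u|\leq R_0\}$ — note that (SH5), unlike (SH4), is \emph{already} a local condition in the paper's formulation. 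Because Theorem \ref{The_uni_sol} is not literally applicable to $\breve{H}^\lambda$ (the global (SH4) Lipschitz clause may fail after dividing by $H_u$), the paper does not quote it as a black box but reruns its sandwiching step: $u^\lambda_\pm=u^\lambda\pm(K^\lambda_{R_0}-\delta^\lambda_{R_0})R_0/K^\lambda_{R_0}$ are sub/supersolutions of $K^\lambda_{R_0}u+\breve{G}(x,Du)=0$, comparison plus Theorem \ref{DFIZ_result} does the rest.

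If you insist on deriving the equi-Lipschitz bound directly from $\breve{H}^\lambda$ (your localization paragraph), the sketch as written is still a bit too optimistic. To get coercivity of $\breve{H}^\lambda(x,p,\text{const})$ from (D1)--(D2) alone, the clean move is to \emph{undo} the division: if $v$ is a subsolution with $|v|\leq C_0$, monotonicity gives $\breve{H}^\lambda(x,Dv,-C_0)\leq 0$, and since $H_u>0$ this is the same as $H(x,Dv,-\lambda C_0)\leq c$; then \eqref{AppIIC0} gives $G(x,Dv)\leq c+B_{C_0}\lambda C_0$, and the coercivity of $G$ (not $\breve{G}$) yields the Lipschitz bound. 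Staying with $\breve{G}$ throughout leaves you needing to control $|H(x,p,-\lambda C_0)-H(x,p,0)|/H_u(x,p,0)$ uniformly in $p$, which (D1)--(D2) do not obviously deliver. So either take the paper's route via $H^\lambda$, or finish the localization by passing back to $H$ and $G$ as above; both give the same $R_0$ and the rest of your argument then goes through.
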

\begin{proof}
Set $H^\lambda(x,p,u)=H(x,p,\lambda u)$. It follows from hypotheses (D1)--(D2) that 
$H^\lambda$ satisfies assumptions (SH1)-(SH4).
Hence, by Theorem \ref{uniq_equi_Lip}, equation \eqref{app4_lambda} has 
a unique continuous viscosity solution, and the whole family  $\{u^\lambda\}_\lambda$ 
 is equi-bounded and equi-Lipschitz, i.e. 
there exists a constant $R_0>0 $ so that
   	$$
   	\max\{ \|u^\lambda \|_\infty ,~\text{Lip}(u^\lambda)\} \leqslant R_0, \quad \text{for all~}  \lambda\in(0,1].$$

Since $H_u>0$, one can easily observe that $u^\lambda$ is a viscosity solution of  
\eqref{app4_lambda}
if and only if $u^\lambda$ is a viscosity solution of 
\begin{equation}\label{reduce_gg1}
	\frac{H(x, Du, \lambda u)-c}{H_u(x, Du, 0)}=0.
\end{equation}
Similarly,  $u$ is a viscosity solution of equation \eqref{app4_0} 
if and only if $u$ is a viscosity solution of 
\begin{equation}\label{reduce_gg2}
\frac{H(x, Du,0)-c}{H_u(x, Du, 0)}=0.
\end{equation}
This leads us to introduce a  family of new  Hamiltonians 
$$
\breve{H}^\lambda(x, p,u)=\frac{H(x, p, \lambda u)-c}{H_u(x,p, 0)}.
$$
and the corresponding Lagrangians 
$$\breve{G}(x, p)=\frac{G(x, p)-c}{H_u(x,p,0)}.$$
The partial derivative of $\breve{H}^\lambda$ with respect to $u$ is 
\begin{equation*}
	\breve{H}^\lambda_u(x, p, u)=\frac{\lambda H_u(x, p, \lambda u)}{H_u(x,p, 0)}>0.
\end{equation*}

For every number $R>0$,  let $$ \delta_R^\lambda :=\min\limits_{\substack{x\in M\\|u|,|p|_x\leqslant R}} \breve{H}^\lambda_u(x,p, u), \quad K^\lambda_R :=\max\limits_{\substack{x\in M\\|u|,|p|_x\leqslant R}} \breve{H}^\lambda_u(x,p, u).$$
Now we claim that $\breve{H}^\lambda$ satisfy assumption (SH5). It suffices to prove that 
\begin{equation}\label{gg_check}
	\lim\limits_{\lambda\to 0}\frac{\delta^\lambda_R}{K^\lambda_R}=1, \quad \text{for every~} R>0.
\end{equation}
Indeed,  for all $|p|_x, |u|\leqslant R$, by \eqref{AppIIC1} we have
\begin{equation*}
	\lambda\bigg( 1-\frac{\lambda RB_R}{H_u(x,p,0)}\bigg)\leqslant\breve{H}^\lambda_u(x,p,u)= \frac{\lambda H_u(x,p, \lambda u)}{H_u(x,p, 0)}\leqslant\lambda\bigg( 1+\frac{\lambda RB_R}{H_u(x,p,0)}\bigg),
\end{equation*}  
Thus if we denote $a:=\min\limits_{|p|_x\leqslant R}|H_{u}(x,p, 0)|>0$,
$$\delta^\lambda_R\geqslant\lambda\big(1-\frac{\lambda RB_R}{a}\big), \quad K^\lambda_R\leqslant \lambda\big(1+\frac{\lambda RB_R}{a}\big). $$
This leads to
$$
1=\liminf\limits_{\lambda\to 0}\frac{ 1-\frac{\lambda RB_R}{a}}{ 1+\frac{\lambda RB_R}{a}}\leqslant \liminf\limits_{\lambda\to 0}\frac{\delta^\lambda_R}{K^\lambda_R}\leqslant \limsup\limits_{\lambda\to 0}\frac{\delta^\lambda_R}{K^\lambda_R}\leqslant 1.
$$
So \eqref{gg_check} is valid, which implies  assumption (SH5). 

On the other hand, by the same argument in the proof of Theorem \ref{The_uni_sol}, we know that $$u_-^\lambda:=u^\lambda -\frac{(K^\lambda_{R_0}-\delta^\lambda_{R_0})R_0}{K_{R_0}^\lambda},\quad u_+^\lambda:=u^\lambda +\frac{(K^\lambda_{R_0}-\delta^\lambda_{R_0})R_0}{K_{R_0}^\lambda}$$ are, respectively, a subsolution and a supersolution of the discounted equation 
\begin{equation}\label{new_lambda_disc_eq}
	K_{R_0}^\lambda u+\breve{G}(x, Du)=0,\quad \forall~x\in M.
\end{equation}
So by Theorem \ref{DFIZ_result},  equation \eqref{new_lambda_disc_eq} admits a unique continuous viscosity solution,  denoted by $w^\lambda$. By the comparison principle for  equation \eqref{new_lambda_disc_eq}, we obtain
$$u_-^\lambda \leqslant w^\lambda\leqslant u_+^\lambda.$$
Condition (SH5)  implies that 
$$\lim_{\lambda\to 0}\frac{(K^\lambda_{R_0}-\delta^\lambda_{R_0})}{K_{R_0}^\lambda}=0,$$
this yields $\lim_{\lambda\to 0} (u^\lambda- w^\lambda)=0$. As $K_{R_0}^\lambda\to 0$, by Theorem \ref{DFIZ_result}, $w^\lambda$ converges uniformly to a unique $u^0$, which is a viscosity solution of $\breve{G}(x,Du)=0$, and the function  $u^0$ is characterized by \eqref{gen_dis_sys_gg_charac}. This finishes our proof.
\end{proof}

\section{Peierls barriers and Theorem  \ref{DFIZ_result}} \label{sec6}

In this section, we mainly focus on the proof of Theorem \ref{DFIZ_result} and Proposition \ref{Pro_rep_h}. As in Section \ref{section_pre},  we always assume in this section that $M$ is a connected and compact manifold without boundary and $H(x,p):T^*M\to\R$ is a given Hamiltonian that satisfies (H1)--(H3).  
We consider the discounted problem  \addtocounter{equation}{1}
\begin{equation}\label{sec6-1}\tag{DP$_\lambda$}
\lambda u+H(x,Du)=c(H) \ \ \text{ in }\ M,
\end{equation}
and the limit problem
\begin{equation}\label{sec6-2}\tag{DP$_0$}
H(x,Du)=c(H) \ \ \text{ in }\ M.
\end{equation} 

It might be worth recalling (see \erf{L-lower-b},\erf{L-upper-b} and \erf{L-superlinear}) that, under hypotheses (H1)--(H3), the formula \ 
$L(x,v)=\sup_{p\in T_x^*M}[\du{p,v}_x-H(x,p)]$\  
defines an extended real-valued, lower semicontinuous function in $TM$ which is convex and superlinear in each fiber $T_xM$ and bounded in a neighborhood of the zero section of $TM$.

We give some details of the proof of the representation formula \erf{dfiz2016_rep} in 
Theorem \ref{DFIZ_result} in the generality of hypotheses (H1)--(H3).  The argument is a modification of 
that in \cite{DFIZ2016} as suggested or indicated in \cite{AAIY2016}.  
Recall again that the convergence assertion of Theorem \ref{DFIZ_result} has been established 
in \cite{DFIZ2016} under the hypotheses (H1)--(H3).

The following proposition asserts that one of the representation formula \erf{dfiz2016_rep} is valid. 

\begin{Pro} \label{prop6-1} Let $u^0\in \Lip(M)$ be the uniform limit of the family $\{u^\gl\}_{\gl>0}$ of the viscosity 
solutions $u^\gl\in\Lip(M)$ of  \erf{sec6-1}, then 
\beq\label{sec6-12}
u^0(x)=\max\{w(x)\mid w\in\cS(H-c(H)),\ \int_M w(x)\,d\mu(x)\leq 0 \ \ \text{ for all }\ \mu\in\frM(L)\}. 
\eeq
\end{Pro}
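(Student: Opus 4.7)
The plan is to verify two inclusions separately: that $u^0$ itself lies in the constraint set on the right of \erf{sec6-12}, and that $u^0$ dominates every other element of that set. The regularity of $u^0$ and its subsolution property for $H(x,Du)=c(H)$ follow directly from Proposition \ref{fathisublip} and the uniform convergence $u^\gl\to u^0$.

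First, to show $\int_M u^0\,d\mu\leq 0$ for every $\mu\in\frM(L)$, I would fix $\tilde\mu\in\widetilde{\frM}(L)$ projecting to $\mu$ and apply Proposition \ref{sm_subapprox} to $u^\gl$, which is a Lipschitz subsolution of $H(x,Du)\leq c(H)-\gl u^\gl$, to obtain $C^\infty$ approximants $w_\ep$ satisfying the same inequality up to $O(\ep)$. Fenchel's inequality $\du{Dw_\ep,v}_y\leq L(y,v)+H(y,Dw_\ep)$ integrated against $\tilde\mu$, combined with the closedness of $\tilde\mu$ and $\int L\,d\tilde\mu=-c(H)$, yields
\[
0\leq -\gl\int_M u^\gl\,d\mu + O(\ep).
\]
Letting $\ep\downarrow 0$ and then $\gl\downarrow 0$ delivers the claim.

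For the dominance, let $w$ lie in the constraint set and fix $x\in M$. I would use the Lax--Oleinik representation
\[
u^\gl(x)=\inf_\xi\int_{-\infty}^0 e^{\gl s}\bigl[L(\xi(s),\dot\xi(s))+c(H)\bigr]\,ds
\]
(whose justification for the merely coercive $H$ is the content of Appendix A) to pick a near-minimizing absolutely continuous curve $\xi^\gl:(-\infty,0]\to M$ with $\xi^\gl(0)=x$, and form the probability measure
\[
\int_{TM}\phi\,d\tilde\mu^\gl := \gl\int_{-\infty}^0 e^{\gl s}\phi(\xi^\gl(s),\dot\xi^\gl(s))\,ds,\quad \phi\in C_c(TM).
\]
An integration by parts yields $\int_{TM}\du{D\phi,v}_y\,d\tilde\mu^\gl=\gl\bigl(\phi(x)-\int_M\phi\,d\mu^\gl\bigr)$ for every $\phi\in C^1(M)$, where $\mu^\gl=\pi_\#\tilde\mu^\gl$, while a direct calculation gives $\int L\,d\tilde\mu^\gl=\gl u^\gl(x)-c(H)+o(1)$. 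Combining the standard inequality $w(x)-w(\xi^\gl(s))\leq\int_s^0[L+c(H)]\,d\tau$ with $w(x)=\gl\int_{-\infty}^0 e^{\gl s}w(x)\,ds$ and Fubini leads to
\[
w(x)\leq \int_M w\,d\mu^\gl+u^\gl(x)+o(1).
\]
Along a subsequence $\gl_n\to 0$, the family $\{\tilde\mu^{\gl_n}\}$ is tight thanks to the fiberwise superlinearity \erf{L-superlinear} of $L$ and the uniform bound on the action, so a weak limit $\tilde\mu^0$ exists; the closedness identity passes to the limit (the RHS is $O(\gl)$), and the lower semicontinuity of $L$ together with $\gl u^\gl(x)\to 0$ forces $\int L\,d\tilde\mu^0\leq -c(H)$, so $\tilde\mu^0\in\widetilde{\frM}(L)$. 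Taking the limit of the previous inequality and using the hypothesis $\int w\,d\mu^0\leq 0$ gives $w(x)\leq u^0(x)$.

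The main obstacle will be the rigorous Lax--Oleinik representation of $u^\gl$ when $H$ is only coercive: $L$ may take the value $+\infty$ and one must work within the class of absolutely continuous curves with finite action, using the extended-real-valued optimal-control machinery deferred to Appendix A. A subsidiary difficulty is the tightness of the discounted measures $\tilde\mu^\gl$ on $TM$, which nonetheless remains available because $L$ is still superlinear in the fibers (a consequence of coercivity plus convexity of $H$), even though $H$ itself is not superlinear. With these two ingredients secured, the argument is a direct adaptation of the Davini--Fathi--Iturriaga--Zavidovique proof via the technical framework of \cite{AAIY2016}.
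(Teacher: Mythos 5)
Your first inclusion---that $u^0$ satisfies $\int_M u^0\,d\mu\leq 0$ for all $\mu\in\frM(L)$---is word for word the paper's own argument: smooth the subsolution $u^\gl$ of $H(x,Du)\leq c(H)-\gl u^\gl$ via Proposition~\ref{sm_subapprox}, apply Fenchel's inequality, integrate against a Mather measure, and send $\ep,\gl\to 0$. No issue there.

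For the second inclusion, you revert to the original DFIZ mechanism: the infinite-horizon discounted Lax--Oleinik formula for $u^\gl$, near-minimizing curves $\xi^\gl$, the discounted occupation measures $\tilde\mu^\gl$, their closedness-up-to-$O(\gl)$, tightness from fiberwise superlinearity of $L$, and a weak limit that is a Mather measure. The arithmetic you sketch (the integration by parts, the Fubini step giving $w(x)\leq\int_M w\,d\mu^\gl+u^\gl(x)+o(1)$, the tightness) is all internally correct, \emph{conditionally} on the Lax--Oleinik representation. The paper, by contrast, deliberately avoids that representation: it perturbs $H$ to the superlinear Hamiltonians $H_R^j(x,p)=H(x,p)+\gd_j\dist(p,B_{x,R})^2$, invokes the adjoint/Mather measures $\tilde\mu^{z,\gl,j}$ of \cite[Proposition 3.5]{DFIZ2016} for these superlinear problems directly (so no optimal-control formula for $u^\gl$ is ever needed), derives the inequality $w(z)\leq u^\gl(z)+\int_M w\,d\mu^{z,\gl,j}$, and then passes to the limit $j\to\infty$ by monotone convergence and tightness, followed by $\gl\to 0$, to produce the limit Mather measure $\tilde\mu^z$.

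The genuine gap in your proposal is the assertion that the discounted Lax--Oleinik formula for $u^\gl$ under only coercive $H$ is ``the content of Appendix A''. It is not: Appendix A (Proposition~\ref{Hopf}) establishes a Hopf--Lax formula for the \emph{evolutionary} Cauchy problem $\pl_t u+H(x,Du)=c(H)$ with finite-time action $h_t$, not the infinite-horizon discounted stationary problem $\gl u+H(x,Du)=c(H)$. Deriving the latter from the former is not automatic---one must control the infinite-horizon limit, the finiteness of the discounted action when $L$ can take the value $+\infty$, and the existence of near-minimizers on $(-\infty,0]$---and the paper never carries this out. Indeed, its perturbation-to-superlinear strategy is precisely designed to sidestep the issue. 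So your route is a valid and well-known one in the superlinear setting, but as presented it rests on an unproved (in this paper) representation formula, and you would need to supply that derivation before the second inclusion is complete.
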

 
\def\and{\text{ and }}

\bproof  We show first that   
\beq \label{sec6-3}
\int_M u^0(x)\,d\mu(x)\leq 0 \ \ \text{ for all }\ \mu\in\frM(L).
\eeq
Indeed, by approximation (Proposition \ref{sm_subapprox}), for each $\gl>0$ and $\ep>0$, there 
exists $u^\gl_\ep\in C^1(M)$ such that
\[
H(x,Du^\gl_\ep)\leq c(H) -\gl u^\gl+\ep \ \ \text{ for all } \ x\in M,
\]
which leads to
\[
\du{Du^\gl_\ep,v}_x\leq c(H)+L(x,v)-\gl u^\gl(x) \ \ \text{ for all }\, (x,v)\in TM,
\]
Integration by $\tilde\mu\in\widetilde{\frM}(L)$ yields
\[\bald
0&\,=\int_{TM} \du{Du^\gl_\ep,v}_x \,d\tilde\mu(x,v)\leq \int_{TM}(c(H)+L-\gl u^\gl+\ep)\,d\tilde\mu(x,v)
\\&\,=-\gl\int_M u^\gl(x)\, d\mu(x)+\ep,
\eald\]
where $\mu:=\pi_\#\tilde\mu$, 
which shows, in the limit as $\ep\to 0$ and $\gl\to 0$, that \erf{sec6-3} is valid.

Now the only thing left for us to show is that, for any $w\in\cS\big(H-c(H)\big)$ with $\int_M w\,d\mu(x)\leq 0$ for all $\mu\in\frM(L)$, 
$w\leq u^0$ in $M$. 

Indeed, according to Theorem~\ref{uniq_equi_Lip}, 
we have a constant $R>0$ such that $w$ and the functions $u^\gl$ are Lipschitz continuous with Lipschitz bound $R$. 
Choose any sequence $\{\gd_j\}_{j\in\N}$ 
of positive numbers converging to zero and define $H_R^j$ by 
\[
H_{R}^j(x,p)=H(x,p)+\gd_j\dist(p,B_{x,R})^2,
\]
where $B_{x,R}$ denotes the ball in $T_x^*M$ of radius $R$ with the center at the origin, 
and $\dist(p,B_{x,R})$ denotes the distance in $T^*_xM$ between $p$ and $B_{x,R}$. 
Let $L_R^j$ be the Lagrangian of $H_R^j$. Since $H_R^j$ is superlinear in the fiber $T_x^*M$ for every $x\in M$, the Lagrangians $L_R^j$ are finite valued and continuous in $TM$ and have superlinear growth 
in each fiber $T_xM$.  
Furthermore, $u^\gl$, with $\gl\geq 0$, is a viscosity solution of $\gl u^\gl+H_R^j(x,Du^\gl)=c(H)$ in $M$ 
and $w$ is a viscosity subsolultion of $H_R^j(x,Dw)=c(H)$ in $M$. 

By \cite[Proposition 3.5]{DFIZ2016}, for each $z\in M$, $\gl>0$ and $j\in\N$, there exists $\mu^{z,\gl,j}\in\cP(M)$   
such that
\beq\label{sec6-4}
\gl u^\gl(z)=\int_{TM} \big[L_R^j(x,v)+c(H)\big]\,d\tilde\mu^{z,\gl,j}(x,v),
\eeq
and, moreover, as a direct consequence of  \cite[(3.5)]{DFIZ2016}, 
\beq\label{sec6-5}
\gl\psi(z)=\int_{TM}\big[\du{D\psi,v}_x+\gl\psi(x)\big]\,d\tilde\mu^{z,\gl,j}(x,v)\ \ \text{ for all }\psi\in C^1(M).
\eeq

Note that $u:=w$ is a viscosity subsolution of  $\gl u+H_R^j(x,Du) = c(H)+\gl w$ in $M$,
and, by approximation (Proposition \ref{sm_subapprox}), we may choose $w_\ep$ for each $\ep>0$ so that 
$\|w_\ep-w\|_{\infty}\leq \ep$ and    
\[
\gl w_\ep+\du{Dw_\ep,v}_x \leq c(H)+L_R^j(x,v)+\gl w+\ep \ \ \text{ in }TM.
\]
Integration with respect to $\tilde\mu^{z,\gl,j}$, combined with \erf{sec6-5} and \erf{sec6-4}, yields 
\[\bald
\gl w_\ep(z)&=\int_{TM}[\du{Dv_\ep,v}_x+\gl w_\ep]\,d\tilde\mu^{z,\gl,j}(x,v)
\\&\leq \int_{TM}[c(H)+L_R^j+\gl w+\ep] \,d\tilde\mu^{z,\gl,j}(x,v)  
=\gl u^\gl(z)+\gl \int_M w(x)\,d\mu^{z,\gl,j}(x)+\ep,
\eald\]
where $\mu^{z,\gl,j}:=\pi_\#\tilde\mu^{z,\gl,j}$. 
Hence, by sending $\ep\to 0$ and dividing by $\gl$, we get
\beq\label{sec6-7}
w(z)\leq u^\gl(z)+\int_M w(x)\,d\mu^{z,\gl,j}(x). 
\eeq

Noting that $L_R^j\leq L_R^{j+1}$ and $\lim_{j\to\infty}L_R^j=L$, we find that for all $j, k\in\N$, 
\beq\label{sec6-6}
\int_{TM} \big[c(H)+L_R^j\big]\, d\tilde \mu^{z,\gl,j+k}(x,v) 
\leq \int_{TM} \big[c(H)+L_R^{j+k}\big]\, d\tilde \mu^{z,\gl,j+k}(x,v)= \gl u^\gl(z).
\eeq
Since every Lagrangian $L_R^j$ has superlinear growth in the fibers, \eqref{sec6-6} shows that
the collection of probability measures $\{\tilde \mu^{z,\gl,j}\}_{j\in\N}$ is tight and has a 
weakly convergent subsequence (in the sense of measures), which we denote still by the same symbol, 
to a probability measure $\tilde \mu^{z,\gl}\in\cP(TM)$. 
From \erf{sec6-6}, we get 
\[
\int_{TM} [c(H)+L_R^j]\,d\tilde\mu^{z,\gl}(x,v)\leq \gl u^\gl(z) \ \ \text{ for all }\ j\in\N,
\]
and then by the monotone convergence theorem that 
\[
\int_{TM} [c(H)+L]\,d\tilde \mu^{z,\gl}(x,v)\leq \gl u^\gl(z). 
\]
This shows that the family $\{\mu^{z,\gl}\}_{\gl>0}$ is tight, and we can choose 
a sequence $\{\mu^{z,\gl_k}\}_{k\in\N}$ converging weakly in the sense of measures to 
a $\tilde\mu^{z}\in\cP(TM)$. By the lower semicontinuity of $L$ and the fact that 
$\lim_{\gl \to 0}u^\gl(x)=u^0(x)$ uniformly, we deduce that
\beq \label{sec6-8}
\int_{TM} [c(H)+L]\,d\tilde\mu^{z}(x,v)\leq 0. 
\eeq
Also, it is easily seen from \erf{sec6-5} and \erf{sec6-7} that
\beq \label{sec6-9}
\int_{TM} \du{D\psi,v}_x\,d\tilde \mu^{z}(x,v)=0 \ \ \text{ for }\ \psi\in C^1(M),
\eeq
and 
\beq\label{sec6-10}
w(z)\leq u^0(z)+\int_M w(x)d\mu^{z}(x),
\eeq
where $\mu^z:=\pi_\#\tilde\mu^{z}$. 

The identity \erf{sec6-9} means that $\tilde\mu^z$ is a closed probability measure on $TM$ and, 
this together with \erf{sec6-8} ensures that $\tilde\mu^z$ is a Mather measure for $L$, i.e.
$\tilde\mu^z\in\widetilde\frM(L)$.  Moreover, it follows that 
\[
\int_{TM} [c(H)+L]\,d\tilde\mu^{z}(x,v)=0. 
\]
By the choice of $w$, we have 
$\int_M w(x)\, d\mu^z(x)\leq 0$. Thus, we conclude from \erf{sec6-10} that $w(z)\leq u^0(z)$, where $z\in M$ is arbitrary, and that formula \erf{sec6-12} holds. 
\eproof

The previous and next propositions together validate Theorem \ref{DFIZ_result} concerning 
the representation of the limit $u^0$.  

\begin{Pro} \label{prop6-2}
Assume \emph{(H1)--(H3)}.  Let $u^\gl\in \Lip(M)$ be a unique solution of \erf{sec6-1} for $\gl>0$ 
and  $u^0\in \Lip(M)$ be the uniform limit of $\{u^\gl\}$ as $\gl\to 0$. 
Then
\beq \label{sec6-2-1}
u^0(x)=\min\bigl\{\int_M h(y,x)\, d\mu(y)\mid \mu\in\frM(L)\bigr\} \ \ \text{for}\ x\in M. 
\eeq
\end{Pro}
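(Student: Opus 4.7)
My plan is to establish the identity by proving the two inequalities "$\leq$" and "$\geq$" separately, relying on Proposition \ref{prop6-1} (which says $u^0\in\mathcal{F}(H)$, in particular $\int u^0\,d\mu\leq 0$ for every $\mu\in\frM(L)$) and on the Mather measures constructed inside its proof.

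First I would verify the upper bound $u^0(x)\leq\int_M h(y,x)\,d\mu(y)$ for every $\mu\in\frM(L)$, which is routine. Since $u^0$ is the uniform limit of solutions of \erf{sec6-1} and $\gl u^\gl\to 0$ uniformly, $u^0$ is itself a viscosity subsolution (in fact a solution) of $H(x,Du)=c(H)$. Assertion (3) of Proposition \ref{proper_criti_sol} gives $u^0(x)-u^0(y)\leq h(y,x)$ for every $y\in M$; integrating in $y$ against $\mu$ and invoking $\int_M u^0\,d\mu\leq 0$ yields the inequality.

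For the reverse direction, given a fixed $x_0\in M$ I would produce a single $\mu^*\in\frM(L)$ realizing equality. The main structural input is the Aubry-set representation of $u^0$, namely
\[
u^0(x_0)=\min_{y\in\cA}[u^0(y)+h(y,x_0)],
\]
which follows from assertion (4) of Proposition \ref{proper_criti_sol} applied to $u^0$ and the auxiliary function $v(x):=\min_{y\in\cA}[u^0(y)+h(y,x)]$: the latter is a viscosity solution of \erf{sec6-2} agreeing with $u^0$ on $\cA$ (that $v$ is a solution in the non-Tonelli setting is established by the Tonelli regularization $H\mapsto H_R^j$ already used in the proof of Proposition \ref{prop6-1}). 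Compactness of $\cA$ gives a minimizer $y_0\in\cA$, so that $u^0(x_0)=u^0(y_0)+h(y_0,x_0)$. Revisiting the construction of Proposition \ref{prop6-1} specialized to $z=y_0$ then produces a Mather measure $\mu^{y_0}\in\frM(L)$ satisfying, for every Lipschitz viscosity subsolution $w$ of $H(x,Du)=c(H)$,
\[
w(y_0)\leq u^0(y_0)+\int_M w(x)\,d\mu^{y_0}(x).
\]
Testing with the subsolution $w(x):=-h(x,y_0)$ (a subsolution by assertion (2) of Proposition \ref{proper_criti_sol}) and using $h(y_0,y_0)=0$ because $y_0\in\cA$ gives $\int_M h(x,y_0)\,d\mu^{y_0}(x)\leq u^0(y_0)$; combined with the upper bound at $y_0$ this becomes the equality $\int_M h(x,y_0)\,d\mu^{y_0}(x)=u^0(y_0)$.

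To conclude I would use the subadditivity $h(y,x_0)\leq h(y,y_0)+h(y_0,x_0)$ for every $y\in M$, immediate from $h=\liminf_t h_t$ and the concatenation bound $h_{s+t}(y,x_0)\leq h_s(y,y_0)+h_t(y_0,x_0)$. Integrating against $\mu^{y_0}$ yields
\[
\int_M h(y,x_0)\,d\mu^{y_0}(y)\leq u^0(y_0)+h(y_0,x_0)=u^0(x_0),
\]
and together with the matching upper bound this forces $\mu^*:=\mu^{y_0}$ to attain the minimum. The main technical obstacle is the attainment step, in which three mechanisms must interlock: the Aubry-set representation of $u^0$, the dual inequality for subsolutions carried over from the proof of Proposition \ref{prop6-1}, and the subadditivity of $h$ combined with $h(y_0,y_0)=0$; the membership $y_0\in\cA$ is crucial because it is exactly what kills the otherwise uncontrolled $h(y_0,y_0)$ term and collapses the chain of inequalities into equalities.
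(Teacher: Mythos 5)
Your argument is correct, but it takes a genuinely different route from the paper's. You prove the upper bound $u^0(x)\leq\int h(y,x)\,d\mu(y)$ the same way the paper does: integrate assertion (3) of Proposition~\ref{proper_criti_sol} against $\mu$ and invoke $\int u^0\,d\mu\leq 0$ from Proposition~\ref{prop6-1}. For the lower bound, however, the paper never exhibits an optimal measure. Writing $w(x)$ for the right-hand side of \eqref{sec6-2-1}, the paper observes that $y\mapsto -h(y,x)+w(x)$ belongs to $\mathcal{F}(H)$ (it is a subsolution by (2) of Proposition~\ref{proper_criti_sol}, and its integral against any $\mu\in\frM(L)$ is $\leq 0$ by the very definition of $w$), so the max formula of Proposition~\ref{prop6-1} gives $u^0(y)\geq -h(y,x)+w(x)$ for all $x,y$; taking $x=y=z\in\cA$ and using $h(z,z)=0$ yields $w\leq u^0$ on $\cA$, and the Aubry-set comparison (4) of Proposition~\ref{proper_criti_sol} finishes. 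In contrast, you explicitly construct a minimizing measure $\mu^{y_0}$: you first derive the weak-KAM representation $u^0(x_0)=\min_{y\in\cA}[u^0(y)+h(y,x_0)]$, then reopen the proof of Proposition~\ref{prop6-1} to extract the Mather measure $\mu^{y_0}$ satisfying $w(y_0)\leq u^0(y_0)+\int w\,d\mu^{y_0}$ uniformly over Lipschitz subsolutions $w$, test with $w=-h(\cdot,y_0)$, and close the loop using the triangle inequality for $h$. This works, with two observations worth recording. First, the measure $\mu^z$ in the proof of Proposition~\ref{prop6-1} is built from $\tilde\mu^{z,\gl,j}$ which depend only on $z$, $\gl$, $j$, and the cutoff $R$, not on $w$; since Proposition~\ref{fathisublip} gives a uniform Lipschitz bound $\kappa_{c(H)}$ for all subsolutions, one may fix $R$ once and for all, so your ``for every Lipschitz subsolution'' claim is justified --- but this requires re-examining that proof rather than quoting its statement, which is more work than the paper needs. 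Second, your parenthetical worry about whether $v(x)=\min_{y\in\cA}[u^0(y)+h(y,x)]$ is a solution is unnecessary: item (4) of Proposition~\ref{proper_criti_sol} only needs $v$ to be a subsolution (an inf of subsolutions, hence a subsolution by convexity) and $u^0$ a supersolution, together with $v\leq u^0$ on $\cA$. What your approach buys is explicitness --- it pinpoints a Mather measure attaining the minimum and ties it to the Aubry-set calibration point $y_0$ --- at the cost of digging into the innards of Proposition~\ref{prop6-1}; the paper's argument stays at the level of that proposition's statement and is shorter.
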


It should be remarked that Proposition \ref{prop6-2} above guarantees the existence of  Mather 
measures for $L$; otherwise the right side of \erf{sec6-2-1} would equal $+\infty$.  
The following proof parallels that of \cite[Theorem 4.3]{DFIZ2016}, with basis on the 
formula $h(y,x)=\min\limits_{z\in\cA}[S(z,x)+S(y,z)]$ in Proposition \ref{Pro_rep_h}. 

\bproof  We write $w(x)$ for the right side of \erf{sec6-2-1}. First, we show that $u^0\leq w$. Indeed, by (3) of Proposition \ref{proper_criti_sol}, 
\[
u^0(x)-u^0(y)\leq h(y,x) \ \ \text{ for all }x,y\in M.
\] 
Integration in $y$ with respect to $\mu\in\frM(L)$, together with Proposition \ref{prop6-1}, yields 
\[
u^0(x)\leq \int_M u^0(y)\,d\mu(y) +\int_M h(y,x)\, d\mu(y)\leq \int_M h(y,x)\,d\mu(y).
\]
This assures that $u^0\leq w$. 

Next, by (2) of Proposition \ref{proper_criti_sol}, the function 
$y\mapsto -h(y,x)+w(x)$ is a viscosity subsolution of \erf{sec6-2}. Integrating this function with respect to $\mu\in\frM(L)$, 
\[
\int_M [ -h(y,x)+w(x)]\,d\mu(y)=-\int_M h(y,x)\,d\mu(y)+w(x)\leq 0 \ \ \text{ for any } \ x\in M.
\]
The characterization of $u^0$ in Proposition \ref{prop6-1} guarantees that 
\begin{equation}\label{sec6-13}
	u^0(y)\geq -h(y,x)+w(x) \ \ \text{ for all }x, y\in M,
\end{equation}
this yields $u^0(z)\geq w(z)$  for all $z\in\cA.$
Since $u^0$ is a viscosity solution  and $w\in\cS\big(H-c(H)\big)$, it follows from (4) of Proposition \ref{proper_criti_sol} that $w\leq u^0$. 
Thus,  $u^0=w$ in $M$
\eproof 

Now, we start to prove Proposition \ref{Pro_rep_h}, which is a critical issue 
in this section. 

The following proposition is fundamental connecting the functions $h_t$ and the solutions 
of the Hamilton-Jacobi equation $\pl_t u+H(x,Du)=c(H)$, a proof of which is given in the appendix. 

\begin{Pro} \label{Hopf}Let $u_0\in\Lip(M)$, and set 
\beq\label{Hopf1}
U(x,t)=\inf_{y\in M} [u_0(y)+h_t(y,x)] \ \ \text{ for } (x,t)\in M\tim(0,\,\infty).
\eeq
Then, 
\begin{enumerate}[\rm(1)]
\item \quad $\lim_{t\to 0+}U(x,t)=u_0(x) \ \ \text{ uniformly for }\ x\in M,$\smallskip
\item \quad The function $U$ is bounded and Lipschitz continuous in $M\tim (0,\,\infty),$ \smallskip
\item  \quad The function $U$ is  
a viscosity solution of 
\beq\label{eHJ}
\pl_t u+H(x,Du)=c(H) \ \ \text{ in }\ M\tim(0,\,\infty).
\eeq
\end{enumerate}
\end{Pro}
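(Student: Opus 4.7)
The plan is to recognize $U$ as the classical Lax-Oleinik evolution with running cost $L + c(H)$ and verify the three items in order. The engine for (2) and (3) will be the dynamic programming principle
\beq\label{dpp-plan}
U(x, t+s) = \inf_{z \in M}\bigl[U(z, t) + h_s(z, x)\bigr] \qquad (s, t > 0),
\eeq
which is immediate from the concatenation identity $h_{t+s}(y,x) = \inf_z[h_t(y,z) + h_s(z,x)]$ applied to definition \erf{Hopf1}.

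For (1), I would prove the two-sided estimate $u_0(x) - Ct \leq U(x,t) \leq u_0(x) + Ct$ directly from the definition of $h_t$. The upper bound uses the constant curve $\xi \equiv x$, giving $h_t(x,x) \leq t[L(x,0) + c(H)]$, which is $O(t)$ by \erf{L-upper-b}. For the lower bound, I would use \erf{L-superlinear} with $A := \Lip(u_0)+1$ to write $L(x,v) + c(H) \geq A|v|_x - C_A$ uniformly in $(x,v)$; combined with $\int_0^t |\dot\xi|_s\,ds \geq d(y,x)$ along any $\xi \in \Gamma^t_{y,x}$, this yields
\[
u_0(y) + \int_0^t [L + c(H)]\,ds \geq u_0(x) - \Lip(u_0)\, d(y,x) + A\,d(y,x) - C_A t \geq u_0(x) - C_A t.
\]
Taking the infimum in $y$ gives the required lower bound.

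For (2), boundedness below comes from fixing a Lipschitz subsolution $v \in \cS(H - c(H))$ (nonempty by the definition of $c(H)$) and using the Fathi-type inequality $v(x) - v(y) \leq \int_0^t[L + c(H)]\,ds$ along any absolutely continuous curve of finite action: this yields $h_t(y,x) \geq v(x) - v(y)$, hence $U \geq \min_M u_0 - 2\|v\|_\infty$. The upper bound requires curves of uniformly bounded action between arbitrary endpoints on arbitrarily long time intervals, which I would construct by traveling from $y$ to $x$ at speed $\gd$ (where \erf{L-upper-b} applies) in time $d(y,x)/\gd$, and then absorbing any remaining time by looping near the support of a Mather measure, where $L + c(H)$ averages to zero. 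Spatial Lipschitz continuity of $U(\cdot, t)$ follows from \erf{dpp-plan} together with the slow-speed connecting curves, and temporal Lipschitz continuity is sandwiched between $s[L(x,0) + c(H)]$ above (via \erf{dpp-plan} with $z = x$) and a superlinearity lower bound below, invoking the already-established spatial Lipschitz constant.

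For (3), once \erf{dpp-plan} is in hand, the viscosity solution property is standard. If $\gf \in C^1$ touches $U$ from above at $(x_0, t_0)$, I test \erf{dpp-plan} with a short constant-velocity curve in direction $v \in T_{x_0}M$, divide by $s$, and let $s \to 0^+$ to obtain
\[
\pl_t \gf(x_0, t_0) + \du{D\gf(x_0, t_0), v}_{x_0} - L(x_0, v) \leq c(H);
\]
taking the supremum in $v$ and using Fenchel duality gives $\pl_t \gf(x_0, t_0) + H(x_0, D\gf(x_0, t_0)) \leq c(H)$. The supersolution inequality is dual, using a near-optimal trajectory in \erf{dpp-plan} together with the lower semicontinuity of $L$. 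The main obstacle will be the global upper bound in (2): since $L$ may take the value $+\infty$ and there is no a priori superlinear upper bound on $h_t(y,x)$ for large $t$, the Mather-measure--based construction of bounded-action curves on arbitrarily long time intervals is the most delicate step, and it rests essentially on the recurrence and near-minimality properties of the support of Mather measures.
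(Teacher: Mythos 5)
Your approach is genuinely different from the paper's. You verify (1)--(3) directly from the dynamic programming principle, in the classical Lax--Oleinik style; the paper instead constructs a Lipschitz viscosity solution $V$ of \eqref{eHJ} with $V(\cdot,0)=u_0$ by a separate regularization argument (Proposition~\ref{exist-cp}), proves $V\leq U$ by approximating $V$ by smooth near-subsolutions and integrating along curves, proves $U\leq u_0+C_0t$, shows $U^*$ is a viscosity subsolution, and concludes $U=V$ by the comparison principle (Proposition~\ref{comp-cp}). The indirect route avoids ever proving the supersolution property for $U$ directly, since that is inherited from $V$ via $V\leq U\leq U^*\leq V$.

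The main gap is in your subsolution argument for (3). Testing the dynamic programming principle against a short constant-velocity curve $\gamma(\tau)$ ending at $x_0$ and sending $s\to0^+$ requires the averaged running cost $\tfrac1s\int_0^s L(\gamma(\tau),v)\,d\tau$ to converge to $L(x_0,v)$ \emph{from above}. But under (H1)--(H3) the Lagrangian $L$ is only lower semicontinuous and may take the value $+\infty$: lower semicontinuity gives you $\liminf_{s\to 0}\tfrac1s\int_0^s L(\gamma(\tau),v)\,d\tau\geq L(x_0,v)$, which is the wrong direction, and for $|v|_{x_0}$ outside the small ball where \eqref{L-upper-b} applies you have no a priori finiteness of $L(\gamma(\tau),v)$ at nearby points $\gamma(\tau)\neq x_0$. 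So the one-sided estimate $\pl_t\gf+\langle D\gf,v\rangle-L(x_0,v)\leq c(H)$ does not follow by this route. The paper sidesteps precisely this issue with Lemma~\ref{Lemma 5.5} (the Skorokhod-problem lemma): given a $C^1$ test function $\gf$ one produces a curve $\gamma$ along which $L(\gamma,\dot\gamma)+H(\gamma,D\gf)\leq\ep+\langle D\gf,\dot\gamma\rangle$ holds pointwise a.e., which delivers a finite-action feasible curve without ever needing upper semicontinuity of $L$. Your supersolution argument, by contrast, uses lower semicontinuity in the correct direction and is fine.

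A secondary concern is your proof of boundedness in (2). Bounding $U$ from above uniformly in $t$ via slow connecting arcs plus ``looping near the support of a Mather measure'' is heuristically right, but because the Peierls barrier is only defined as a $\liminf$ under (H1)--(H3), you do not a priori know $\sup_{s>0}h_s(z,z)<\infty$ for $z\in\cA$; making this rigorous essentially amounts to redoing part of the weak KAM convergence theory. The paper's route again avoids this entirely: it only needs the linear bound $U(x,t)\leq u_0(x)+C_0t$ plus $V\leq U^*$ to make $U^*$ finite-valued for the comparison argument, and then inherits global bounds for $U$ from the identity $U=V$ with $V\in\Lip(M\times[0,\infty))$.

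Your estimate for (1) is correct as written.
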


\begin{Pro} \label{comp-cp} Let $0<T\leq \infty$ and $v,w : M\tim[0,\,T) \to \R$ be an upper semicontinuous 
viscosity subsolution and a lower semicontinuous viscosity supersolution of \eqref{eHJ} 
in $M\tim(0,\,T)$. Assume that 
$v(x,0)\leq w(x,0)$ for $x\in M$. Then, $v\leq w$ in $M\tim [0,\,T)$. 
\end{Pro}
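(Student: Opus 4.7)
The plan is to prove this comparison result by the standard doubling-of-variables technique, adapted to the compact manifold setting. The main subtlety is that (H1)--(H3) require $H$ to be only continuous, not locally Lipschitz, in $x$, so a priori Lipschitz estimates on sub- and supersolutions will be essential.

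First I would reduce to $T<\infty$, since it suffices to prove $v\leq w$ on $M\times[0,T']$ for each $T'<T$. A preliminary observation is that under (H3), any bounded usc viscosity subsolution of $\partial_t u+H(x,Du)=c(H)$ is Lipschitz in the space variable, uniformly in $t$, with a Lipschitz constant depending only on $\|v\|_\infty$ and the growth of $H$ --- the time-dependent analog of Proposition \ref{fathisublip}, proved by a standard penalization argument using coercivity; the same holds (with signs reversed) for the lsc supersolution $w$. This supplies the uniform bound on the penalty covectors required below.

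Suppose for contradiction that $\sup_{M\times[0,T)}(v-w)>0$. To keep the supremum away from $t=T$, set $v^\gamma(x,t):=v(x,t)-\gamma/(T-t)$ with $\gamma>0$ small; then $v^\gamma$ is a strict viscosity subsolution with slack $\gamma/(T-t)^2$, blows up to $-\infty$ as $t\uparrow T$, and still satisfies $\sup(v^\gamma-w)>0$. Next, for each $\alpha>0$, I would consider
\[
\Phi_\alpha(x,y,t,s):=v^\gamma(x,t)-w(y,s)-\frac{\alpha}{2}\bigl(d(x,y)^2+(t-s)^2\bigr)
\]
on $M\times M\times[0,T)\times[0,T)$, where $d$ is the Riemannian distance. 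Upper semicontinuity and the blow-up at $t=T$ guarantee the supremum is attained at some $(x_\alpha,y_\alpha,t_\alpha,s_\alpha)$ with $t_\alpha,s_\alpha$ staying uniformly away from $T$. Standard estimates comparing $\Phi_\alpha$ at the maximizer with its value on the diagonal force $\alpha(d(x_\alpha,y_\alpha)^2+(t_\alpha-s_\alpha)^2)\to 0$ and, along a subsequence, convergence of the maximizers to a single point $(z,\tau)\in M\times[0,T)$.

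If $t_\alpha=0$ or $s_\alpha=0$ along a subsequence, the initial inequality $v(\cdot,0)\leq w(\cdot,0)$ together with the upper/lower semicontinuity of $v^\gamma,w$ yields $\sup(v^\gamma-w)\leq -\gamma/T<0$, contradicting our assumption. Otherwise, the maximizers are interior for all large $\alpha$, and applying the viscosity inequalities for $v^\gamma$ at $(x_\alpha,t_\alpha)$ and for $w$ at $(y_\alpha,s_\alpha)$ and subtracting the two (the $t$- and $s$-derivatives of the penalty are equal and opposite) yields
\[
\frac{\gamma}{(T-t_\alpha)^2}+H(x_\alpha,p_\alpha)-H(y_\alpha,q_\alpha)\leq 0,
\]
where $p_\alpha,q_\alpha$ are the penalty covectors at $x_\alpha,y_\alpha$, related by parallel transport in a chart near the diagonal. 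The main obstacle is here: with $H$ merely continuous, one must show that $H(x_\alpha,p_\alpha)-H(y_\alpha,q_\alpha)\to 0$. This is where the preliminary Lipschitz estimate enters: it forces $|p_\alpha|_{x_\alpha},|q_\alpha|_{y_\alpha}\leq C$ uniformly and $p_\alpha-q_\alpha\to 0$ (in any chart), so uniform continuity of $H$ on the compact set $\{|p|_x\leq C\}\subset T^*M$ delivers the convergence, yielding $\gamma/T^2\leq 0$, a contradiction.
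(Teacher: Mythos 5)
Your argument hinges on the preliminary claim in the second paragraph: that a bounded usc subsolution of $\pl_t u + H(x,Du)=c(H)$ is Lipschitz in $x$ uniformly in $t$, and that ``the same holds (with signs reversed)'' for the lsc supersolution. This claim is false, and it is precisely what you use to bound the penalty covectors $p_\alpha, q_\alpha$. For supersolutions there is no mechanism at all: coercivity of $H$ gives an \emph{upper} bound on $H(x,Dw)$ for subsolutions, while the supersolution inequality only gives a lower bound on $H(x,Dw)$, which coercivity does nothing with. Concretely, on $M=\T$ with $H(x,p)=p^2$ and $c(H)=0$, the function $w(x,t)=-|\sin x|^{1/2}$ is a bounded lsc viscosity supersolution (at the cusps $x\in\{0,\pi\}$ no $C^1$ test function touches from below, so the condition is vacuous there) and it is not Lipschitz. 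For subsolutions the claim also fails: with $H(x,p)=|p|$, $c(H)=0$, and any $1$-Lipschitz $\phi:M\to(0,T)$, the usc function $v(x,t)=0$ for $t\leq \phi(x)$, $v(x,t)=-M$ for $t>\phi(x)$ is a bounded viscosity subsolution (at a jump point the superdifferential of $v$ consists of covectors $(\mu D\phi(x_0),-\mu)$ with $\mu\geq 0$, for which $-\mu+\mu|D\phi(x_0)|\leq 0$), yet $v(\cdot,t)$ has a jump discontinuity in $x$ for $t$ between $\inf\phi$ and $\sup\phi$. The reason the stationary Lipschitz estimate of Proposition \ref{fathisublip} does not carry over is that the evolution inequality $\pl_t v + H(x,Dv)\leq c(H)$ permits $\pl_t v$ to be arbitrarily negative, which compensates for arbitrarily large $H(x,Dv)$.

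Without this a priori Lipschitz bound on at least one of $v,w$, the inequality $\Phi_\alpha(x_\alpha,y_\alpha,t_\alpha,s_\alpha)\geq\Phi_\alpha$(diagonal) no longer forces $\alpha\big(d(x_\alpha,y_\alpha)^2+(t_\alpha-s_\alpha)^2\big)$ to stay bounded, and your passage to the limit via uniform continuity of $H$ on $\{|p|_x\leq C\}$ breaks down. The paper circumvents this exactly as in its stationary comparison Theorem \ref{The_comp_prin}: use Proposition \ref{exist-cp} to produce a Lipschitz viscosity solution $u$ of \eqref{eHJ} with initial data $u_0\in\Lip(M)$ sandwiched between $v(\cdot,0)$ and $w(\cdot,0)$ (after adding a small constant to $w$), and then prove $v\leq u$ and $u\leq w$ separately. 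In each of these two comparisons, one side is Lipschitz, which is what supplies the bound $\alpha\big(|x_\alpha-y_\alpha|^2+|t_\alpha-s_\alpha|^2\big)^{1/2}\leq \Lip(u)$. The time-penalization $\gd(T-t)^{-1}$, localization via a strict-maximum function $\varphi$, and passage to a local chart that you describe are all as in the paper's proof, but they must be applied with the Lipschitz intermediary $u$ in one slot of the doubling, not with $v$ against $w$ directly.
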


Semicontinuous viscosity sub- and super-solutions are needed in our proof of Proposition \ref{Hopf} 
in the appendix. By definition, an upper semicontinuous function $v\mid V\to \R$, where 
$V$ is an open subset of $M\tim(0,\,\infty)$, is a viscosity subsolution of 
$\pl_t v+H(x,Dv)=c(H)$ in $V$ if, for any $\gf\in C^1(V)$ and $(\bar x,\bar t)\in V$ such that 
$(v-\gf)(x,t)\leq (v-\gf)(\bar x,\bar t)$ for $(x,t)\in V$, we have \ $\pl_t \gf(\bar x,\bar t)+H(\bar x, D\gf(\bar x,\bar t))\leq c(H)$. Similarly, a lower semicontinuous function $v\mid V\to \R$, where 
$V$ is an open subset of $M\tim(0,\,\infty)$, is a viscosity subsolution of 
$\pl_t v+H(x,Dv)=c(H)$ in $V$ if, for any $\gf\in C^1(V)$ and $(\bar x,\bar t)\in V$ such that 
$(v-\gf)(x,t)\geq (v-\gf)(\bar x,\bar t)$ for $(x,t)\in V$, we have \ $\pl_t \gf(\bar x,\bar t)+H(\bar x, D\gf(\bar x,\bar t))\geq c(H)$.   

The proof of Proposition \ref{comp-cp} is similar to that of Theorem \ref{The_comp_prin} once one knows the existence of a Lipschitz continuous, viscosity solution of \erf{eHJ} for each Lipschitz 
initial data, as stated in the next proposition. We give main ideas of the proof of Propositions \ref{comp-cp} and \ref{exist-cp} in Appendix B.    

\begin{Pro} \label{exist-cp} For any $u_0\in\Lip(M)$, there exists a viscosity solution $u\in\Lip(M\tim[0,\,\infty))$ of \erf{eHJ} satisfying the initial condition $u(\cdot, 0)=u_0$. 
\end{Pro}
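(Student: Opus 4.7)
The plan is to exhibit the solution explicitly via the Hopf--Lax formula, leaning on Proposition~\ref{Hopf} as the main workhorse. Define
\[
u(x,t) \;=\; \inf_{y \in M}\bigl[u_0(y) + h_t(y,x)\bigr] \quad \text{for } (x,t) \in M \times (0,\infty),
\]
and extend by $u(x,0) = u_0(x)$.

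The three items of Proposition~\ref{Hopf} then handle the three requirements of the statement at a single stroke. Item~(1) guarantees $u(\cdot,t)\to u_0$ uniformly as $t\to 0+$, so the extension is continuous and enforces the initial condition $u(\cdot,0)=u_0$. Item~(3) shows that $u$ solves \eqref{eHJ} in the viscosity sense on $M\times(0,\infty)$, which is the PDE requirement on the open half-space. Item~(2) gives a Lipschitz constant $L$ for $u$ on $M\times(0,\infty)$; because $u$ is continuous on $M\times[0,\infty)$ and the open half-space is dense there, a routine limiting argument extends the Lipschitz estimate to the closed half-space, after possibly replacing $L$ by $\max(L,\Lip(u_0))$ to absorb the spatial Lipschitz bound at $t=0$.

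Had Proposition~\ref{Hopf} been unavailable, my fallback would be Perron's method: choose $C>0$ large enough that $|H(x,p)-c(H)|\le C$ whenever $|p|_x\le \Lip(u_0)$, check that $u_0(x)\pm Ct$ are respectively a super- and a subsolution of \eqref{eHJ} (the Lipschitz bound on $u_0$ confines the relevant super/subdifferentials to $\{|p|_x\le\Lip(u_0)\}$), and form the Perron envelope of subsolutions pinned between these two barriers. Equi-Lipschitz regularity of this envelope would follow from coercivity of $H$ in space together with a shift-and-compare argument in time using the $u_0\pm Ct$ sandwich.

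The genuine obstacle thus lies inside Proposition~\ref{Hopf} itself, specifically the global Lipschitz regularity in item~(2) without superlinearity of $H$. One needs the near-zero-section bound $L(x,v)\le C_0$ from \eqref{L-upper-b}, which yields $h_\delta(z,z)\le(C_0+c(H))\delta$ along constant curves, and then the subadditivity $h_{t+s}(y,x)\le\inf_z[h_t(y,z)+h_s(z,x)]$ to transfer that estimate into temporal Lipschitz control of $u$. Spatial Lipschitz control then comes from coercivity of $H$ combined with the same subadditivity.
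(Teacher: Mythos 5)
Your main plan is circular. Proposition~\ref{Hopf} is proved in Appendix~A by setting $V$ equal to ``the function $u$ in Proposition~\ref{exist-cp}'' and then invoking Proposition~\ref{comp-cp} to conclude $U^*\le V$; and the outline of proof of Proposition~\ref{comp-cp} itself opens ``By Proposition~\ref{exist-cp} there exists a viscosity solution $u\in\Lip(M\times[0,\infty))$.'' So both Proposition~\ref{Hopf} and Proposition~\ref{comp-cp} sit strictly downstream of Proposition~\ref{exist-cp} in the paper's logical structure; you cannot use either of them to establish it. The fact that the Hopf--Lax formula \emph{is} the solution is precisely the content of Proposition~\ref{Hopf}, but that conclusion is reached only after existence and comparison for the Cauchy problem are in hand.

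Your Perron fallback runs into the same circularity at a different spot. The ``shift-and-compare argument in time'' that you rely on to get equi-Lipschitz regularity is a comparison principle for \eqref{eHJ}, and that is exactly Proposition~\ref{comp-cp}. The reason a separate argument is needed at all here is that $H$ is merely coercive (not superlinear), $L$ may take the value $+\infty$, and so the Cauchy problem for \eqref{eHJ} is not covered by off-the-shelf theory. The paper breaks the circle by truncating the Hamiltonian: set $\Theta_R(r)=\min\{r,R\}$ and $H_R=\Theta_R\circ(H-c(H))$, which is bounded and uniformly continuous on $T^*M$, so that existence, uniqueness and comparison for $\partial_t u+H_R(x,Du)=0$ all follow from the classical theory with no bootstrap. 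The barriers $u_0(x)\pm C_0 t$ (your $C$) then yield, via classical comparison for the truncated equation and the time-shift trick, a temporal Lipschitz bound $C_0$ for the truncated solution $u^R$, independent of $R$; once $R>C_0$, the truncation is inactive on the relevant range of values of $H-c(H)$, so $u^R(\cdot,t)$ is a subsolution of $H(x,Du)=c(H)+C_0$ for each $t$, coercivity of $H$ gives the spatial Lipschitz bound, and $u^R$ is in fact a viscosity solution of \eqref{eHJ}. If you want to salvage your Perron route, you would need to first establish comparison for the truncated equation and apply Perron there, which in effect reproduces the paper's argument; applying Perron directly to \eqref{eHJ} without that reduction leaves the Lipschitz regularity of the envelope unjustified.
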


The next lemma is a simple adaptation of Proposition \ref{sm_subapprox} and it is left 
to the reader to prove it. 

\begin{Lem} \label{approx-cp} Let $u\in\Lip(M\tim[0,\,\infty))$ be a viscosity solution  of \erf{eHJ}. 
Then, for each $\ep>0$, there exists $u_\ep\in C^\infty(M\tim[0,\,\infty))$ such that 
\[
\pl_t u_\ep+H(x,Du_\ep)\leq c(H)+\ep \ \ \text{ in }M\tim(0,\,\infty)\quad\and 
\quad \|u-u_\ep\|_\infty<\ep.
\]
\end{Lem}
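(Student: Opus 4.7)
The plan is to adapt the proof of Proposition \ref{sm_subapprox} to the space-time setting, treating $t$ on the same footing as the spatial variable. The starting observation is that, since $u$ is a Lipschitz viscosity solution of \eqref{eHJ}, it is in particular a viscosity subsolution, and being Lipschitz (hence a.e.\ differentiable on $M\tim(0,\infty)$ by Rademacher's theorem), the viscosity subsolution property yields at each point of differentiability the pointwise inequality
\[
\pl_t u(y,s) + H(y, D_x u(y,s)) \leq c(H) \quad \text{for a.e.\ } (y,s) \in M \tim (0,\infty).
\]
Set $R:=\Lip(u)$. Since $H$ is continuous on the compact set $\{(y,p)\in T^*M : |p|_y\leq R\}$, it is uniformly continuous there; let $\omega(\cdot)$ denote a modulus of continuity of $H(y,p)$ in the variable $y$, uniform over $|p|_y\leq R$.

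Next I would carry out the mollification. Fix a finite atlas on $M$ with a subordinate smooth partition of unity, and extend $u$ in time to a Lipschitz function $\tilde u$ on $M\tim\R$, for instance by setting $\tilde u(x,t)=u(x,0)$ for $t<0$, which preserves the Lipschitz constant of $u$ up to a bounded factor. Let $\rho_\delta$ be a standard nonnegative smooth mollifier on $\R^n \tim \R$ of support radius $\delta$ with total mass one. To ensure the convolution only samples points where the a.e.\ subsolution inequality is available, I use a one-sided time shift and define, in each local chart,
\[
u_\delta(x,t) := \int \tilde u(y,\,t+\delta-s)\, \rho_\delta(y-x,\,s)\, dy\, ds.
\]
Patching via the partition of unity yields $u_\delta \in C^\infty(M\tim[0,\infty))$, and the Lipschitz continuity of $u$ gives the sup-norm estimate $\|u_\delta - u\|_\infty \leq C\delta$ for some constant $C$ depending only on $\Lip(u)$ and the atlas.

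Finally, the approximate subsolution inequality follows from Jensen's inequality. For $(x,t)\in M\tim(0,\infty)$, the derivatives $\pl_t u_\delta$ and $D u_\delta$ are the corresponding convolutions of $\pl_t \tilde u$ and $D_x \tilde u$ against $\rho_\delta$, and because of the time shift these integrals only involve values $(y,\tau)$ with $\tau>0$. Convexity of $H$ in $p$ (assumption (H2)) together with Jensen's inequality yields
\[
H(x, Du_\delta(x,t)) \leq \int H\bigl(x,\, D_x \tilde u(y,\,t+\delta-s)\bigr)\, \rho_\delta(y-x,s)\, dy\, ds,
\]
and since $|D_x \tilde u|\leq R$ and $|y-x|\leq \delta$, replacing $H(x,\cdot)$ by $H(y,\cdot)$ inside the integrand costs at most $\omega(\delta)$. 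Combining with the a.e.\ subsolution inequality gives
\[
\pl_t u_\delta(x,t) + H(x, Du_\delta(x,t)) \leq c(H) + \omega(\delta).
\]
Choosing $\delta$ so small that $\omega(\delta)<\ep$ and $C\delta<\ep$, I set $u_\ep:=u_\delta$. The main technical obstacle is the boundary at $t=0$, where a naive symmetric convolution would sample values of $\tilde u$ at negative times carrying no subsolution information; this is cleanly resolved by the one-sided time shift. The manifold structure is handled by the standard partition-of-unity device, with chart-transition errors of order $\delta$ absorbed into $\omega(\delta)$.
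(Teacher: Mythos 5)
Your proof is correct and carries out exactly the adaptation the paper leaves implicit: as in the proof of Proposition~\ref{sm_subapprox} (cf.\ Evans' Lemma~2.2(d) and Fathi's Theorem~10.6), one passes to the a.e.\ pointwise inequality via Rademacher, mollifies in a local chart, and uses convexity of $H$ in $p$ plus Jensen and the uniform modulus of continuity of $H$ in $x$ to control $H(x,Du_\delta)$. The only new wrinkle in the space-time setting is the boundary $\{t=0\}$, and your one-sided time shift (sampling at times $t+\delta-s\in[t,t+2\delta]\subset(0,\infty)$) is the right way to handle it; note that this shift already makes your extension of $u$ to negative times unnecessary, as you observe. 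The partition-of-unity patching is the usual delicate step — the extra term $\sum_i u^{(i)}_\delta D\chi_i=\sum_i(u^{(i)}_\delta-u)D\chi_i$ is $O(\delta)$ because $\sum_i D\chi_i=0$, and is then absorbed by the uniform continuity of $H$ on bounded sets of covectors — but this is exactly the argument in the cited references, so your brief remark suffices.
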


We need the dynamic programming principle, stated as

\begin{Lem} \label{dpp} Let $\tau>0,\,\sigma>0$  and set $t=\tau+\gs$. Then
\[\bald
h_t(x,y)&\,=\inf_{z\in M}[h_\tau(x,z)+h_\gs(z,y)].
\eald\]
\end{Lem}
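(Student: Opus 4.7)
The plan is to prove the two inequalities $h_t(x,y)\leq \inf_{z\in M}[h_\tau(x,z)+h_\sigma(z,y)]$ and $h_t(x,y)\geq \inf_{z\in M}[h_\tau(x,z)+h_\sigma(z,y)]$ separately by the standard concatenation/restriction argument, adapted to the possibility that $L$ may take the value $+\infty$.

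For the $\leq$ direction, I fix $z\in M$ and an arbitrary $\ep>0$. I choose $\xi_1\in \Gamma^\tau_{x,z}$ and $\xi_2\in\Gamma^\sigma_{z,y}$ that are near-minimizers for $h_\tau(x,z)$ and $h_\sigma(z,y)$ respectively, within $\ep$ of the infima (if either infimum is $+\infty$, then the inequality is trivial, so we may assume finiteness). Define $\xi:[0,t]\to M$ by $\xi(s)=\xi_1(s)$ for $s\in[0,\tau]$ and $\xi(s)=\xi_2(s-\tau)$ for $s\in[\tau,t]$. This is absolutely continuous and belongs to $\Gamma^t_{x,y}$, and by a change of variables in the integral,
\[
\int_0^t[L(\xi,\dot\xi)+c(H)]\,ds=\int_0^\tau[L(\xi_1,\dot\xi_1)+c(H)]\,ds+\int_0^\sigma[L(\xi_2,\dot\xi_2)+c(H)]\,ds,
\]
so $h_t(x,y)\leq h_\tau(x,z)+h_\sigma(z,y)+2\ep$. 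Sending $\ep\to 0$ and taking infimum over $z\in M$ yields the $\leq$ inequality.

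For the $\geq$ direction, I may assume $h_t(x,y)<+\infty$ (else there is nothing to prove), and for each $\ep>0$ pick $\xi\in\Gamma^t_{x,y}$ with action within $\ep$ of $h_t(x,y)$. Set $z:=\xi(\tau)$, and define $\xi_1(s):=\xi(s)$ on $[0,\tau]$ and $\xi_2(s):=\xi(\tau+s)$ on $[0,\sigma]$. Both are absolutely continuous, belong to $\Gamma^\tau_{x,z}$ and $\Gamma^\sigma_{z,y}$ respectively, and since $L$ is Borel measurable and bounded below on $TM$ (recall $L(x,v)\geq -H(x,0)$), the integral of $L+c(H)$ over $[0,t]$ splits as the sum over $[0,\tau]$ and $[\tau,t]$; the splitting is valid even when individual integrals are $+\infty$ because the lower bound on $L$ prevents the $\infty-\infty$ ambiguity. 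Hence
\[
h_\tau(x,z)+h_\sigma(z,y)\leq \int_0^\tau[L(\xi_1,\dot\xi_1)+c(H)]\,ds+\int_0^\sigma[L(\xi_2,\dot\xi_2)+c(H)]\,ds\leq h_t(x,y)+\ep,
\]
and taking infimum over $z\in M$ and letting $\ep\to 0$ yields the $\geq$ inequality.

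The main technical care concerns the possibility that $L(x,v)=+\infty$ on part of $TM$, so that $h_\tau(x,z)$ or $h_\sigma(z,y)$ may individually be $+\infty$. Thanks to the pointwise lower bound $L\geq -H(\cdot,0)$ and the compactness of $M$, the lower bound $L+c(H)\geq -\|H(\cdot,0)\|_\infty + c(H)$ is uniform, so the Lebesgue integrals appearing above are well defined in $(-\infty,+\infty]$, and the additivity of the integral on disjoint intervals carries over without issue. No other subtlety arises; the concatenation argument is a pure measure-theoretic rephrasing of the classical proof.
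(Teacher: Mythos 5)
Your proof is correct and is exactly the standard concatenation/restriction argument the paper alludes to when it remarks that the lemma ``can be proved easily by the definition of $h_t$.'' The extra care you take to handle the case $L=+\infty$---invoking the uniform lower bound $L(x,v)\geq -H(x,0)\geq -\max_M H(\cdot,0)$ to justify that the Lebesgue integrals are well defined in $(-\infty,+\infty]$ and additive over subintervals---is precisely the technicality that needs attention in this setting, and you have handled it properly.
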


Remark that both sides of the formula  in the lemma above can be $+\infty$. This lemma can be proved easily by the definition of $h_t$.

\begin{Pro} \label{S_&_h_t}For any $x,y\in M$, we have
\[
S(x,y)=\inf_{t>0}h_t(x,y).
\]
\end{Pro}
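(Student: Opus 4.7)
The plan is to prove the two inequalities $S(x,y)\leq\inf_{t>0}h_t(x,y)$ and $\inf_{t>0}h_t(x,y)\leq S(x,y)$ separately.

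For the first inequality, I would fix $\psi\in\cS\big(H-c(H)\big)$ and invoke Proposition~\ref{sm_subapprox}: for each $\ep>0$ there exists $\psi_\ep\in C^\infty(M)$ with $\|\psi_\ep-\psi\|_\infty\leq\ep$ and $H(y,D\psi_\ep(y))\leq c(H)+\ep$ in $M$. The Fenchel--Young inequality $\du{p,v}_y\leq H(y,p)+L(y,v)$ then yields, for any absolutely continuous $\xi\in\Gamma^t_{x,y}$,
\[
\psi_\ep(y)-\psi_\ep(x)=\int_0^t\du{D\psi_\ep(\xi(s)),\dot\xi(s)}_{\xi(s)}\,ds\leq\int_0^t[L(\xi(s),\dot\xi(s))+c(H)+\ep]\,ds.
\]
Infimizing over $\xi$, sending $\ep\to 0^+$, infimizing over $t>0$ and then taking the supremum over $\psi$ yields $S(x,y)\leq\inf_{t>0}h_t(x,y)$.

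For the reverse inequality, fix $x$ and set $\Phi(y):=\inf_{t>0}h_t(x,y)$. The plan is to verify (A) $\Phi(x)\leq 0$, and (B) $\Phi$ is a viscosity subsolution of $H(y,Du)=c(H)$ on $M$. Together these imply, by the very definition of $S$, that $\Phi(y)-\Phi(x)\leq S(x,y)$, so $\Phi(y)\leq S(x,y)$ as desired. For (A), the constant curve $\xi\equiv x$ on $[0,t]$ gives $h_t(x,x)\leq t\,[L(x,0)+c(H)]$, and since \eqref{L-upper-b} yields $L(x,0)\leq C_0<+\infty$, letting $t\to 0^+$ produces $\Phi(x)\leq 0$. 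Note that the first inequality already forces $S(x,\cdot)\leq\Phi$, so $\Phi$ is real-valued.

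For (B), I would first extract the Lagrangian ``action-dominating'' inequality from the dynamic programming principle (Lemma~\ref{dpp}): from $h_{t+\sigma}(x,y)\leq h_t(x,z)+h_\sigma(z,y)$, taking the infimum over $t>0$ and using $\inf_{t>0}h_{t+\sigma}(x,y)\geq\Phi(y)$ gives $\Phi(y)\leq\Phi(z)+h_\sigma(z,y)$ for all $z\in M$ and $\sigma>0$; hence for every AC curve $\xi:[0,\sigma]\to M$,
\[
\Phi(\xi(\sigma))-\Phi(\xi(0))\leq\int_0^\sigma[L(\xi(s),\dot\xi(s))+c(H)]\,ds.
\]
To pass from this to the viscosity subsolution property, I would argue as follows. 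Suppose $\gf\in C^1$ satisfies $\gf\geq\Phi$ near $y_0$ with $\gf(y_0)=\Phi(y_0)$. For any $v\in T_{y_0}M$, use a local chart and test along the curve $\xi(s)=y_0+(s-\sigma)v$, so that $\xi(\sigma)=y_0$, $\xi(0)=y_0-\sigma v$ and $\dot\xi\equiv v$. The action-dominating inequality combined with the touching condition gives
\[
\gf(y_0)-\gf(y_0-\sigma v)\leq\Phi(y_0)-\Phi(y_0-\sigma v)\leq\int_0^\sigma[L(\xi(s),v)+c(H)]\,ds,
\]
and dividing by $\sigma$ and letting $\sigma\to 0^+$ yields $\du{D\gf(y_0),v}_{y_0}\leq L(y_0,v)+c(H)$. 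Supremizing over $v\in T_{y_0}M$ and invoking the duality $H(y_0,\cdot)=L(y_0,\cdot)^*$ gives $H(y_0,D\gf(y_0))\leq c(H)$, proving (B).

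The main technical obstacle is the final passage from the action-dominating inequality to the classical viscosity subsolution property, carried out in a setting where $L$ is only lower semicontinuous and may take the value $+\infty$. What makes the argument go through is that $\Phi$ is bounded and the test curves are short, so by \eqref{L-upper-b} the integrand $L(\xi(s),v)$ remains finite provided $|v|_{y_0}$ is small; since $H(y_0,\cdot)$ is finite and coercive, taking the supremum over $v$ in a neighborhood of $0$ already controls $H(y_0,D\gf(y_0))$. Once $\Phi$ is known to be a subsolution, Proposition~\ref{fathisublip} gives Lipschitz continuity a posteriori, and the argument closes with $\Phi(y)\leq S(x,y)+\Phi(x)\leq S(x,y)$.
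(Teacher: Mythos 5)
Your proof of the first inequality $S(x,y)\leq\inf_{t>0}h_t(x,y)$ is correct, and it is in fact more elementary than the paper's own route: you only invoke the smooth-subsolution approximation (Proposition \ref{sm_subapprox}) and Fenchel--Young, whereas the paper deduces it via the Lax--Oleinik formula (Proposition \ref{Hopf}) and the comparison principle for the Cauchy problem (Proposition \ref{comp-cp}). This is a genuine simplification of this half.

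The second inequality, however, has a real gap. Your overall strategy --- show $\Phi$ is real-valued with $\Phi(x)=0$, deduce from the dynamic programming principle that $\Phi(\xi(\sigma))-\Phi(\xi(0))\leq\int_0^\sigma[L+c(H)]$, and infer that $\Phi$ is a viscosity subsolution --- is precisely the ``standard'' argument the paper alludes to right after the statement, but the passage from the action-dominating inequality to the subsolution property is where the LSC Lagrangian bites. Dividing by $\sigma$ and sending $\sigma\to 0^+$ requires $\limsup_{\sigma\to 0}\frac{1}{\sigma}\int_0^\sigma L(\xi(s),v)\,ds\leq L(y_0,v)$, i.e.\ upper semicontinuity of $L(\cdot,v)$ at $y_0$, which is available (via the coercivity bound on the maximizing $p$) when $|v|_{y_0}$ is small, but may fail for general $v$ when $H$ is merely coercive. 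Your proposed fix --- restrict the supremum to a neighborhood of $v=0$ --- does not close this: from $\du{D\gf(y_0),v}_{y_0}\leq L(y_0,v)+c(H)$ for $|v|_{y_0}<\delta$ you only get a bound on $|D\gf(y_0)|$, not the desired $H(y_0,D\gf(y_0))\leq c(H)$, since $H(y_0,p)=\sup_v[\du{p,v}_{y_0}-L(y_0,v)]$ may be realized by $v$ outside that neighborhood. A concrete obstruction: take $H(y,p)=a(y)|p|+V(y)$ in local coordinates with $a$ continuous, $a(y_0)=1$, $a(y)<1$ for $y\neq y_0$; then $\operatorname{dom}L(y,\cdot)$ shrinks as $y$ moves away from $y_0$, and $L(\cdot,v)$ is discontinuous at $y_0$ precisely at the boundary velocities $|v|=1$ that realize the conjugate. (A separate, more minor point: finiteness of $\Phi$ needs the geodesic upper bound $h_{d(x,y)/\gd}(x,y)\leq(C_0+c(H))\,d(x,y)/\gd$, not just the lower bound $S(x,\cdot)\leq\Phi$.)

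The paper avoids this issue entirely by routing the argument through Proposition \ref{Hopf}: it defines $v(x,t)=\inf_z[h^-(y,z)+h_t(z,x)]$, observes via the dynamic programming principle that $v(x,t)=\inf_{s>0}h_{t+s}(y,x)$ is nondecreasing in $t$, concludes that $x\mapsto v(x,t)$ is a viscosity subsolution for each $t>0$, and lets $t\to 0$ to see that $h^-(y,\cdot)$ is a subsolution. The heavy lifting with the LSC Lagrangian is delegated to the proof of Proposition \ref{Hopf} in the appendix (via Lemma \ref{Lemma 5.5}). To salvage your approach one would have to either (i) establish the required upper semicontinuity of $L(\cdot,v)$ for $v$ in the interior of $\operatorname{dom}L(y_0,\cdot)$ and show this suffices, or (ii) simply invoke Proposition \ref{Hopf} as the paper does; the hand-wave at the end is not a valid substitute.
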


A standard proof of the proposition above is the one similar to that of Proposition \ref{Hopf} 
presented in the Appendix below and based on the dynamic programming principle (Lemma \ref{dpp}). 
The following proof is more dependent on Proposition \ref{Hopf}.

\bproof  Fix any $y\in M$ and set
\[
u(x,t)=\inf_{z\in M}[S(y,z)+h_t(z,x)].
\]
According to Proposition \ref{Hopf}, the function $u\in \Lip(M\tim(0,\,\infty))$ is a viscosity solution of 
\beq\label{cp2}
\pl_t u+H(x,Du)=c(H) \ \ \text{ in  }M\tim(0,\,\infty)
\eeq
and satisfies
\beq\label{cp3}
\lim_{t\to 0}u(x,t)=S(y,x) \ \ \text{ uniformly for }x\in M. 
\eeq
Since the function $(x,t)\mapsto S(y,x)$ is also a viscosity subsolution of \erf{cp2} with \erf{cp3}, it follows from Proposition \ref{comp-cp} that $S(y,x)\leq u(x,t)$, which implies that 
\[ 
S(y,x)\leq h_t(y,x)\ \ \text{ for }t>0.
\]
If we set \ $h^-(x,y)=\inf_{t>0} h_t(x,y)$,
 the inequality above reads 
\beq\label{S<=h^-}
S(y,x)\leq h^-(y,x)\ \ \text{ for }x,y\in M.
\eeq

Let $\gd$ and $C_0$ be the constants in \eqref{L-upper-b}. We choose a constant $r>0$  so that for any $x,y\in M$, if $d(x,y)<r$, then  
there is a geodesic curve $\gamma$ with speed $\gd $ connecting $x$ and $y$ and 
\[
h_{d(x,y)/\gd}(x,y)\leq \int_0^{d(x,y)/\gd}L(\gamma(s),\dot\gamma(s))+c(H)\, ds
\leq (C_0+c(H))d(x,y)/\gd.
\]
Accordingly, since $M$ is compact and connected,  for some constant $C>0$, we have 
\[
h^-(x,y)\leq Cd(x,y) \ \ \text{ for all }x,y\in M. 
\]
We may assume that $C$ is large enough so that $S$ is Lipschitz continuous with Lipschitz bound $C$. 
By \erf{S<=h^-}, we have
\[
h^-(x,y)\geq S(x,y)\geq -Cd(x,y). 
\]
It follows from Lemma \ref{dpp} that 
\[
h^-(x,y)\leq h^-(x,z)+h^-(z,y) \ \ \text{ for all }x,y,z\in M.
\]
These show that $h^-(x,x)=0$ for $x\in M$ and $h^-\in \Lip(M\tim M)$. 

By Lemma \ref{dpp}, 
\beq\label{h_t+h^-}
\inf_{s>0}h_{t+s}(y,x)=\inf_{s>0}\inf_{z\in M}[h_s(y,z)+h_t(z,x)]
=\inf_{z\in M}[h^-(y,z)+h_t(z,x)].
\eeq
Hence, fixing $y\in M$ and setting 
\[
v(x,t)=\inf_{z\in M}[h^-(y,z)+h_t(z,x)],
\]
we observe by Proposition \ref{Hopf} that $v$ is a viscosity solution of \erf{cp2} and satisfies 
$\lim_{t\to 0}v(x,t)=h^-(y,x)$ uniformly for $x\in M$. By \erf{h_t+h^-}, we have  
$v(x,t)=\inf_{s>0}h_{t+s}(y,x)$, which shows that the function $t\mapsto v(x,t)$ is nondecreasing. Hence
we see that, for any $t>0$, $x\mapsto v(x,t)$ is a viscosity subsolution of $H(x,Du)=c(H)$ in $M$ and, 
since $h^-(y,x)=\lim_{t\to 0}v(x,t)$ uniformly, the function  
$x\mapsto h^-(y,x)$ is a viscosity subsolution of $H(x,Du)=c(H)$ in $M$. By the definition of $S$,
we have 
\[
h^-(y,x)=h^-(y,x)-h^-(y,y)\leq S(y,x).
\]   
This and \erf{S<=h^-} yield that $h^-(y,x)=S(y,x)$. 
\eproof

We set temporarily 
\[
\cA_S=\{z\in M\mid \text{$x\mapsto S(z,x)$ is a viscosity solution of }H(x,Du)=c(H)\text{ in }M\}.
\]
Then we have an equivalent description for the projected Aubry set.

\begin{The}\label{equ_Aub}
The sets	$\cA_S=\cA$. 
\end{The}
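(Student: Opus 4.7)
The plan is to identify $\cA$ and $\cA_S$ by comparing the stationary function $S(z,\cdot)$ with the evolutionary solution having this function as initial datum. I would set
\[
v(x,t):=\inf_{y\in M}[S(z,y)+h_t(y,x)], \qquad (x,t)\in M\tim[0,\,\infty),
\]
which by Proposition \ref{Hopf} is a Lipschitz viscosity solution of $\pl_t u+H(x,Du)=c(H)$ on $M\tim(0,\,\infty)$ with $v(\cdot,0^+)=S(z,\cdot)$. The crucial preparatory step is to rewrite $v$ by first invoking Proposition \ref{S_&_h_t} to substitute $S(z,y)=\inf_{s>0}h_s(z,y)$ and then applying the dynamic programming Lemma \ref{dpp} to collapse the double infimum, yielding
\[
v(x,t)=\inf_{s>0}h_{s+t}(z,x)=\inf_{r>t}h_r(z,x).
\]
This representation is non-decreasing in $t$, so $v(x,t)\geq v(x,0^+)=S(z,x)$ for every $t\geq 0$; moreover, the identity $S(z,z)=0$ (immediate from the definition) together with Proposition \ref{S_&_h_t} gives $h_s(z,z)\geq 0$ for all $s>0$.

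For the inclusion $\cA\subset \cA_S$, suppose $h(z,z)=0$. Given $T>0$ and $\ep>0$, the liminf definition of $h(z,z)$ combined with the nonnegativity of $h_s(z,z)$ allows me to pick $s>T$ with $h_s(z,z)<\ep$. For each $r'>0$, Lemma \ref{dpp} yields $h_{s+r'}(z,x)\leq h_s(z,z)+h_{r'}(z,x)<\ep+h_{r'}(z,x)$, and since $s+r'>T$ this gives $v(x,T)\leq \ep+h_{r'}(z,x)$. Taking the infimum in $r'>0$ and then letting $\ep\to 0$, I obtain $v(x,T)\leq S(z,x)$, so $v(x,t)\equiv S(z,x)$. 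Applying the standard reduction with time-independent test functions $\gf(x,t)=\psi(x)$ then shows that $S(z,\cdot)$ is a viscosity solution of $H(x,Du)=c(H)$, that is, $z\in\cA_S$.

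For the reverse inclusion $\cA_S\subset \cA$, suppose $x\mapsto S(z,x)$ is a viscosity solution of $H(x,Du)=c(H)$. Then $w(x,t):=S(z,x)$ is automatically both a viscosity sub- and supersolution of $\pl_t u+H(x,Du)=c(H)$ on $M\tim[0,\,\infty)$, sharing its initial trace with $v$. Applying Proposition \ref{comp-cp} to the pairs $(v,w)$ and $(w,v)$ gives $v\equiv w$, and evaluating at $x=z$ produces $\inf_{r>t}h_r(z,z)=S(z,z)=0$ for every $t>0$. Combined with $h_r(z,z)\geq 0$, this forces a sequence $r_n\to\infty$ with $h_{r_n}(z,z)\to 0$, so $h(z,z)=\liminf_{r\to\infty}h_r(z,z)=0$, i.e.\ $z\in\cA$.

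The main obstacle is the identification $v(x,t)=\inf_{r>t}h_r(z,x)$: it requires Proposition \ref{S_&_h_t} and Lemma \ref{dpp} to work in tandem, and without it neither inclusion would reduce cleanly to a one-sided comparison. Once this representation is secured, the remaining work relies only on the monotonicity of $v$ in $t$, the identity $S(z,z)=0$, and the comparison principle for the time-dependent Hamilton--Jacobi equation.
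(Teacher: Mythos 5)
Your proof is correct, and it shares the paper's overall skeleton: you introduce the same evolutionary solution $v(x,t)=\inf_{y\in M}[S(z,y)+h_t(y,x)]$, rewrite it as $\inf_{r>t}h_r(z,x)$ via Proposition~\ref{S_&_h_t} and Lemma~\ref{dpp}, and in the direction $\cA_S\subset\cA$ you invoke Proposition~\ref{comp-cp} exactly as the paper does, then evaluate at $x=z$ and pass to the limit in $t$. The only genuine divergence is in the direction $\cA\subset\cA_S$, and specifically in how the upper bound $v(x,t)\leq S(z,x)$ is obtained. The paper first squeezes out $u(z,t)=0$ from $0=h(z,z)=\lim_{t\to\infty}u(z,t)\geq u(z,t)\geq S(z,z)=0$, then uses the monotonicity of $u$ in $t$ to conclude that each slice $x\mapsto u(x,t)$ is a stationary viscosity subsolution, and finally appeals to the definition of $S$ as the supremum over subsolutions to get $u(x,t)=u(x,t)-u(z,t)\leq S(z,x)$. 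You instead go directly through the dynamic programming principle: choosing $s>T$ with $h_s(z,z)<\ep$ (legitimate, since $h_r(z,z)\geq 0$ and $\liminf_{r\to\infty}h_r(z,z)=0$ force $\inf_{r>T}h_r(z,z)=0$ for every $T$) and combining $h_{s+r'}(z,x)\leq h_s(z,z)+h_{r'}(z,x)$ with Proposition~\ref{S_&_h_t} yields $v(x,T)\leq\ep+S(z,x)$ at once. Your route avoids both the ``monotone evolutionary solution yields a stationary subsolution'' step and the sup-over-subsolutions characterization of $S$, trading them for a short epsilon-argument with the Peierls barrier; the paper's route, while slightly longer, makes the intermediate fact $u(z,t)\equiv 0$ explicit, which it also uses as motivation. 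Both are rigorous, and the difference is local to this one inequality.
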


\bproof
 Fix any $z\in \cA_S$, namely the 
function $x\mapsto S(z,x)$ is a viscosity solution of $H(x,Du)=c(H)$ in $M$. 
As in the proof of Proposition \ref{S_&_h_t}, we set 
\beq \label{def-u}
u(x,t)=\inf_{y\in M}[S(z,y)+h_t(y,x)] \ \ \text{ for }\ (x,t)\in M\tim(0,\,\infty).
\eeq
Observe that the functions $u(x,t)$ and $w(x,t):=S(z,x)$ are both 
viscosity solutions of \erf{eHJ} with the initial condition 
$\lim_{t\to 0}u(x,t)=\lim_{t\to 0}w(x,t)=S(z,x)$ uniformly for $x\in M$. Proposition \ref{comp-cp}
then guarantees that $u=w$. That means \[
\inf_{y\in M}[S(z,y)+h_t(y,x)] =S(z,x). 
\]
This combined with Propositions \ref{S_&_h_t} and \ref{dpp} reveals 
\[
S(z,x)=\inf_{y\in M}[h_t(y,x)+\inf_{s>0}h_s(z,y)]=\inf_{s>0}h_{t+s}(z,x),
\]
and, evaluated at $x=z$,
\[
0=\inf_{s>0}h_{t+s}(z,z)\ \ \text{and}\ \ h(z,z)=\lim_{t\to+\infty}\inf_{s>0}h_{t+s}(z,z)=0. 
\]
Thus, we see that $z\in \cA$. 

Now, let $z\in\cA$, and define $u\mid M\tim(0,\,\infty)\to\R$ by \erf{def-u} as before. 
By Proposition \ref{S_&_h_t}, $u(x,t)\geq\inf_{y\in M}[S(z,y)+S(y,x)]\geq S(z,x)$ for $(x,t)\in M\tim(0,\,\infty)$. 
Next, we observe that $\lim_{t\to 0}u(x,t)=S(z,x)$ uniformly for $x\in M$, 
and 
\[
u(x,t)=\inf_{s>0}h_{t+s}(z,x),
\]
the last of which implies that the function $t\mapsto u(x,t)$ is nondecreasing. 
Hence, 
\[
0=h(z,z)=\lim_{t\to \infty} u(z,t)\geq u(z,t)\geq \lim_{t\to 0}u(z,t)=S(z,z)=0,\quad t>0
\]
This  yields 
\[
u(z,t)=0 \ \ \ \text{ for all }t>0.
\]
The monotonicity implies as well that, for each $t>0$, $v(x):=u(x,t)$ is a viscosity 
subsolution of $H(x,Dv)=c(H)$ in $M$. By the definition of $S$, we have
$u(x,t)=v(x)-v(z)\leq S(z,x)$ for all $x\in M$ and $t>0$, so 
$u(x,t)=S(z,x)$ for $(x,t)\in M\tim(0,\,\infty)$. This shows that $(x,t)\mapsto S(z,x)$ is 
a viscosity solution of \erf{eHJ}, which implies that $w\mid x\mapsto S(z,x)$ is a 
viscosity solution of  $H(x,Dw)=c(H)$ in $M$. Hence, $z\in\cA_S$, finishing the proof.   
\eproof 

Theorem \ref{equ_Aub} has proven the first part of Proposition \ref{Pro_rep_h}, while the second part is as follows:
\begin{The} \label{formula_h} For any $x,y\in M$,  
\[
h(x,y)=\inf_{z\in\cA}[S(x,z)+S(z,y)].
\]
\end{The}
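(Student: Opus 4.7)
The plan is to establish the two inequalities separately.

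For the upper bound $h(x,y)\leq\inf_{z\in\cA}[S(x,z)+S(z,y)]$, fix $z\in\cA$ and use $h(z,z)=0$ to extract $T_n\to\infty$ with $h_{T_n}(z,z)\to 0$. For any $s,r>0$, iterating Lemma \ref{dpp} yields $h_{s+T_n+r}(x,y)\leq h_s(x,z)+h_{T_n}(z,z)+h_r(z,y)$; letting $n\to\infty$ along this subsequence of diverging times gives $h(x,y)\leq h_s(x,z)+h_r(z,y)$. Taking the infima over $s,r>0$ via Proposition \ref{S_&_h_t} produces $h(x,y)\leq S(x,z)+S(z,y)$, and the upper bound follows by infimising over $z\in\cA$.

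For the lower bound, fix $y\in M$ and set $\phi(x):=h(y,x)$ and $\psi(x):=\inf_{z\in\cA}[S(y,z)+S(z,x)]$. The strategy is to show that $\phi$ and $\psi$ are both viscosity solutions of $H(x,Du)=c(H)$ that coincide on $\cA$, and then to invoke the uniqueness principle on the Aubry set. To see that $\phi$ is a solution, I would consider $v(x,t):=\inf_{z\in M}[S(y,z)+h_t(z,x)]$: by Proposition \ref{Hopf}, $v$ is a Lipschitz viscosity solution of $\pl_tv+H(x,Dv)=c(H)$ with $v(\cdot,0)=S(y,\cdot)$. Substituting $S(y,z)=\inf_{s>0}h_s(y,z)$ and applying Lemma \ref{dpp} gives $v(x,t)=\inf_{\tau>t}h_\tau(y,x)$, which is nondecreasing in $t$ and converges pointwise to $\phi(x)$; uniform convergence follows from the equi-Lipschitz property of $v(\cdot,t)$ and Dini's theorem. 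The monotonicity of $v$ in $t$ makes each slice $v(\cdot,t)$ a subsolution of $H(x,Du)=c(H)$, so $\phi$ inherits the subsolution property by stability; the supersolution property is obtained by applying a relaxed semi-limits argument to the time-translates $v_n(x,t):=v(x,t+n)$, which are solutions of the evolutionary equation converging locally uniformly to the $t$-independent function $\phi$, and then testing with $t$-independent functions.

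Next, by the upper bound already proved together with the pointwise inequality $h\geq S$ (from $h=\liminf h_t\geq\inf h_t=S$), one has $\phi(z)=h(y,z)=S(y,z)$ for $z\in\cA$; similarly $\psi(z)=S(y,z)$ on $\cA$ by the triangle inequality for $S$. As for $\psi$, each function $x\mapsto S(y,z)+S(z,x)$ with $z\in\cA$ is a viscosity solution by Theorem \ref{equ_Aub}, so $\psi$ is a viscosity supersolution as the infimum of solutions; the triangle inequality for $S$ gives $\psi(x_1)-\psi(x_2)\leq S(x_2,x_1)$, which, by the standard weak-KAM duality between subsolutions and the semi-distance $S$ for coercive convex Hamiltonians, makes $\psi$ also a viscosity subsolution. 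Finally, applying Proposition \ref{proper_criti_sol}(4) in both directions to the two solutions $\phi$ and $\psi$, which agree on $\cA$, yields $\phi\equiv\psi$ on $M$, which is the desired identity.

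The principal obstacle is the supersolution property of $\phi$: monotonicity in $t$ makes the subsolution part immediate but gives no control on the reverse inequality, and the supersolution property instead rests on the relaxed-semi-limit (or equivalently, a careful comparison) argument for the evolutionary problem, relying on Proposition \ref{comp-cp}. A secondary subtlety is the subsolution characterization of $\psi$ via the $S$-inequality, which invokes the standard weak-KAM duality not made explicit in the excerpt.
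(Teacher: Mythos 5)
Your upper-bound argument is essentially identical to the paper's Lemma \ref{h<=S+S}: split $t=\tau+\sigma+\theta$, apply the dynamic programming principle, pass $\sigma\to\infty$ along a sequence realizing $h(z,z)=0$, and finish with Proposition \ref{S_&_h_t}.

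For the lower bound, you take a genuinely different route from the paper's Lemma \ref{h>=S+S}. The paper's proof is variational: it invokes Lemma \ref{Aubry_prop} to produce $\psi\in\Lip(M)$ with $H(x,D\psi)\le c(H)+f$ and $f$ strictly negative off a neighbourhood of $\cA$, then shows by integrating $D\psi_\delta$ along near-minimizers that any long near-optimal curve must enter an $\ep$-neighbourhood of $\cA$, and finally splits the action at the entry time. Your proof is a PDE argument: you show that $\phi(\cdot)=h(y,\cdot)$ is a viscosity solution of $H(x,Du)=c(H)$ by exhibiting it as the monotone (and hence, via Dini and relaxed semi-limits, uniform) $t\to\infty$ limit of the evolutionary solution $v(x,t)=\inf_z[S(y,z)+h_t(z,x)]$; you show $\psi(\cdot)=\inf_{z\in\cA}[S(y,z)+S(z,\cdot)]$ is a solution by Theorem \ref{equ_Aub} (inf of solutions gives the supersolution part) together with the weak-KAM characterization of subsolutions via the $S$-semidistance; and you conclude by matching the two on $\cA$ and invoking the Aubry-set uniqueness principle, Proposition \ref{proper_criti_sol}(4). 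This is logically consistent: Theorem \ref{equ_Aub}, Proposition \ref{S_&_h_t}, Lemma \ref{dpp} and Propositions \ref{Hopf}, \ref{comp-cp} all precede Theorem \ref{formula_h}, and item (4) of Proposition \ref{proper_criti_sol} is cited externally rather than derived from Proposition \ref{Pro_rep_h}. The trade-off is that you lean on two standard but non-elementary facts that the paper does not spell out: the converse to Proposition \ref{semi-distance}(2) (that $w(x_1)-w(x_2)\le S(x_2,x_1)$ for all $x_1,x_2$ implies $w$ is a critical subsolution, the Fathi--Siconolfi duality), and the Aubry-set comparison (4) in the merely continuous, non-superlinear setting; you acknowledge both, correctly, as the two points requiring outside input. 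The paper instead leans on Lemma \ref{Aubry_prop}, which it also cites externally, so both proofs import comparable amounts of weak-KAM machinery. Your version has the advantage of being entirely PDE-theoretic and independent of the existence of strict subsolutions; the paper's version gives more explicit control of near-minimizing curves.
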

We divide the proof of Theorem \ref{formula_h} into proving the following three lemmas. 

\begin{Lem} \label{h<=S+S}
For any $ x,y\in M$,
\[
h(x,y)\leq \inf_{z\in\cA}[S(x,z)+S(z,y)].
\]
\end{Lem}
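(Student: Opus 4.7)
The plan is to combine three ingredients already established: the characterization $S(x,y)=\inf_{t>0}h_t(x,y)$ from Proposition \ref{S_&_h_t}, the defining property $h(z,z)=0$ of points $z\in\cA$, and the dynamic programming principle of Lemma \ref{dpp} which gives subadditivity of $h_t$ in $t$ under composition of endpoints.

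Fix $x,y\in M$ and any $z\in\cA$. Given $\ep>0$, Proposition \ref{S_&_h_t} lets us choose $\tau>0$ and $\gs>0$ with
\[
h_\tau(x,z)\leq S(x,z)+\ep\quad\text{and}\quad h_\gs(z,y)\leq S(z,y)+\ep.
\]
Since $h(z,z)=0$, the definition $h(z,z)=\liminf_{t\to\infty}h_t(z,z)$ produces a sequence $t_n\to\infty$ with $h_{t_n}(z,z)\leq\ep$.

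Applying Lemma \ref{dpp} twice at the intermediate points $z$ and $z$, we obtain
\[
h_{\tau+t_n+\gs}(x,y)\leq h_\tau(x,z)+h_{t_n}(z,z)+h_\gs(z,y)\leq S(x,z)+S(z,y)+3\ep.
\]
Taking $\liminf_{n\to\infty}$ (noting $\tau+t_n+\gs\to\infty$) bounds $h(x,y)$ by $S(x,z)+S(z,y)+3\ep$, and letting $\ep\to 0$ followed by infimum over $z\in\cA$ finishes the proof.

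There is essentially no obstacle: the only subtlety is that $h_t$ can equal $+\infty$, but the estimates above produce finite upper bounds on each $h_\tau(x,z)$, $h_\gs(z,y)$ and $h_{t_n}(z,z)$ because $S$ is finite-valued (Proposition \ref{semi-distance}) and $h(z,z)=0$, so the use of Lemma \ref{dpp} is legitimate.
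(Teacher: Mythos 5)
Your proof is correct and follows essentially the same route as the paper's: both rely on the identity $S=\inf_{t>0}h_t$ from Proposition \ref{S_&_h_t}, the fact $h(z,z)=0$ for $z\in\cA$, and Lemma \ref{dpp} to split $h_{\tau+t_n+\gs}(x,y)$ through two visits to $z$. The only difference is bookkeeping: you fix an $\ep$ and extract an $\ep$-optimal triple $(\tau,\gs,t_n)$ up front, whereas the paper holds $\tau,\theta$ general, takes the liminf in the middle time, and then passes to the infimum over $\tau,\theta$ afterward — the two are mathematically equivalent.
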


\bproof  
Let $\tau, \gs, \gth\in(0,\,\infty)$ and set $t=\tau +\gs+\gth$.  
By Lemma \ref{dpp}, we have
\[\bald
h_t(x,y)&\,=\inf_{z_1,z_2\in M}[h_\tau(x,z_1)+h_\gs(z_1,z_2)+h_\gth(z_2,y)]
\\&\,\leq h_\tau(x,z)+h_\gs(z,z)+h_\gth(z,y) \ \ \text{ for any }\, z\in\cA.
\eald\]
Noting that $h(z,z)=0$ for $z\in \cA$, we take 
the liminf of the both sides as $\gs\to\infty$, to obtain
\[
h(x,y)\leq h_\tau(x,z)+h_\gth(z,y) \ \ \text{ for all }z\in\cA.
\] 
This and Proposition \ref{S_&_h_t} yield
\[
h(x,y)\leq S(x,z)+S(z,y) \ \ \text{ for }z\in\cA,
\]
which completes the proof. 
\eproof

The following lemma is a basic observation in weak KAM theory. 
For the proof, see \cite[Proposition 8.5.3]{Fathi_book}, where a more refined version of the lemma is discussed. See also \cite[Lemma 8.4]{Ishii2008} for details on the proof.

\begin{Lem} \label{Aubry_prop}Let $K\subset M$ be a compact set such that $K\cap \cA=\emptyset$. 
Then there exists a function $\psi\in \Lip(M)$ and $f\in C(M)$ 
such that $H(x,D\psi)\leq c(H)+f$ a.e. in $M$ and $\max_Kf<0$. 
\end{Lem}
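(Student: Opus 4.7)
My plan is to construct a single Lipschitz critical subsolution $\psi$ of $H(x,Du)=c(H)$ that is \emph{strictly} sub-solving on an open neighborhood $U\supset K$, and then to choose $f\in C(M)$ that dominates $H(\cdot,D\psi)-c(H)$ everywhere while taking strictly negative values on $K$. The argument proceeds in three stages.

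\textbf{Stage 1 (local strict subsolutions).} Fix $y\in K$. Because $y\notin\cA$, Theorem \ref{equ_Aub} tells us that $x\mapsto S(y,x)$ is a critical viscosity subsolution but \emph{not} a solution of $H(x,Du)=c(H)$; equivalently, by Proposition \ref{Pro_rep_h} and the definition of $\cA$, one has $h(y,y)>0$ while $S(y,y)=0$. This strict positivity is the ``action gap'' to be exploited: combining $S(y,\cdot)$ (or any fixed global critical subsolution) with a local perturbation built from a $C^1$ bump around $y$ and the smooth-approximation scheme of Proposition \ref{sm_subapprox}, one produces a Lipschitz critical subsolution $\psi_y:M\to\R$, together with an open neighborhood $V_y\ni y$ and a constant $\delta_y>0$, such that
\[
H(x,D\psi_y(x))\leq c(H)-\delta_y \text{ for a.e. }x\in V_y, \quad H(x,D\psi_y(x))\leq c(H) \text{ for a.e. }x\in M.
\]

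\textbf{Stage 2 (finite cover and convex combination).} By compactness, pick a finite subcover $V_{y_1},\dots,V_{y_N}$ of $K$; set $U:=\bigcup_{i=1}^N V_{y_i}$ and $\delta:=\min_i\delta_{y_i}>0$. Define $\psi(x):=\tfrac1N\sum_{i=1}^N\psi_{y_i}(x)$. Using the convexity of $H$ in the fibres (hypothesis (H2)), at any $x\in V_{y_j}$ one has
\[
H(x,D\psi(x))\leq \tfrac1N\sum_{i=1}^N H(x,D\psi_{y_i}(x))\leq \tfrac1N\bigl((N-1)c(H)+(c(H)-\delta_{y_j})\bigr)\leq c(H)-\tfrac{\delta}{N},
\]
because the $j$-th term provides the strict gap while the remaining terms satisfy only the non-strict bound. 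Hence $H(x,D\psi)\leq c(H)-\delta/N$ a.e.\ in $U$ and $H(x,D\psi)\leq c(H)$ a.e.\ in $M$.

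\textbf{Stage 3 (continuous envelope).} Choose $\chi\in C(M;[0,1])$ with $\chi\equiv 1$ on $K$ and $\chi\equiv 0$ off $U$, and let $M_1$ be any essential upper bound for $H(\cdot,D\psi)-c(H)$ on $M$ (finite, since $\psi$ is Lipschitz and $H$ is continuous). Set
\[
f(x):=-\tfrac{\delta}{N}\chi(x)+M_1\bigl(1-\chi(x)\bigr).
\]
Then $f\in C(M)$, $f\equiv-\delta/N<0$ on $K$ so that $\max_K f<0$, and a case-by-case check (on $K$, on $U\setminus K$, and on $M\setminus U$, using $-\delta/N\leq M_1$) shows $H(x,D\psi(x))-c(H)\leq f(x)$ a.e.\ in $M$.

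\textbf{Main obstacle.} The real content of the argument is Stage 1: building, for each individual $y\in K$, a Lipschitz critical subsolution which is strict in a \emph{full} open neighborhood of $y$ rather than merely at the point $y$ itself. This is the step that genuinely uses $y\notin\cA$ (through the gap $h(y,y)>0$ and the failure of $S(y,\cdot)$ to be a supersolution), and it is where the interplay between the semi-distance $S$, the Peierls barrier $h$, and the smoothing procedure of Proposition \ref{sm_subapprox} is essential; the convex-combination and bump-function steps in Stages 2 and 3 are then routine.
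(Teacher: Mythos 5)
The overall architecture you propose is the standard one and agrees in spirit with the references the paper points to (Fathi's book Proposition 8.5.3, Ishii \cite[Lemma 8.4]{Ishii2008}); the paper itself does not give a proof but delegates to those sources. Your Stages 2 and 3 are correct: since $\psi=\tfrac1N\sum_i\psi_{y_i}$ is Lipschitz and the $\psi_{y_i}$ are differentiable a.e.\ by Rademacher, (H2) gives $H(x,D\psi)\leq\tfrac1N\sum_i H(x,D\psi_{y_i})$ a.e., and the one strict term on each $V_{y_j}$ gives the uniform gap $\delta/N$ on $U\supset K$; the cut-off construction of $f$ then closes the argument.

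The gap is in Stage 1, and you correctly identify it as the heart of the matter, but the mechanism you sketch would not produce the object you need. Adding (or subtracting) a $C^1$ bump $\phi$ around $y$ to a critical subsolution $w$ cannot be expected to push $H(x,D(w\pm\phi))$ strictly below $c(H)$ near $y$: at the extremum of the bump $D\phi=0$, so the gradient is unchanged there, and elsewhere the gradient is perturbed in an uncontrolled direction relative to the level sets of $H(x,\cdot)$. Worse, Proposition \ref{sm_subapprox} only produces an approximation satisfying $H(x,Dw_\ep)\leq c(H)+\ep$, i.e.\ it \emph{relaxes} the subsolution inequality by $\ep$; it cannot be used to tighten it. The actual construction is dynamical rather than local: one must use $h(y,y)>0$ to find $T,\ep>0$ with $h_t(z,z')>\ep$ for $z,z'$ in a ball $B\ni y$ and $t\geq T$, and then exploit this gap to show that the \emph{perturbed} Hamiltonian $H_\rho(x,p)=H(x,p)+\rho\,\eta(x)$ (with $\eta\geq0$ a continuous bump supported near $y$, $\eta(y)>0$, and $\rho$ small) still has critical value $c(H)$; a critical subsolution of $H_\rho$ is then exactly a subsolution of $H(x,Du)\leq c(H)-\rho\,\eta(x)$, which is strict on $\{\eta>0\}$. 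Showing that the critical value does not drop under such a perturbation (equivalently, that there is no closed curve near $y$ with vanishing normalized action) is precisely where $h(y,y)>0$ enters, and it is this step—not a bump-plus-smoothing argument—that must be supplied to complete Stage 1.
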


\begin{Lem} \label{h>=S+S} We have 
\[
h(x,y)\geq\inf_{z\in\cA} [S(x,z)+S(z,y)]
.\]
\end{Lem}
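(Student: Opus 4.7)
The plan is to realize $h(x,y)$ by near-minimizing action curves of ever-increasing length and to split such a curve at a well-chosen midpoint lying close to $\cA$; the two halves will then control $S(x,z)$ and $S(z,y)$ via the identity $S=\inf_{t>0}h_t$ of Proposition \ref{S_&_h_t}. Concretely, choose $t_n\to\infty$ with $h_{t_n}(x,y)\to h(x,y)$ and curves $\xi_n\in\Gamma^{t_n}_{x,y}$ with
\[
A_n:=\int_0^{t_n}[L(\xi_n,\dot\xi_n)+c(H)]\,ds\leq h_{t_n}(x,y)+\tfrac1n.
\]
The goal is to locate $s_n\in(0,t_n)$ with $s_n\to\infty$, $t_n-s_n\to\infty$, and $\dist(\xi_n(s_n),\cA)\to 0$; then setting $z_n:=\xi_n(s_n)$, the Fenchel--Young superadditivity gives
\[
S(x,z_n)+S(z_n,y)\leq h_{s_n}(x,z_n)+h_{t_n-s_n}(z_n,y)\leq A_n\longrightarrow h(x,y),
\]
and a diagonal extraction together with compactness and closedness of $\cA$ yields some $z\in\cA$ with $S(x,z)+S(z,y)\leq h(x,y)$, as required.

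The existence of such splitting times is what makes the argument work, and it is supplied by Lemma \ref{Aubry_prop}, applied to $K_\ep:=\{x\in M\mid\dist(x,\cA)\geq\ep\}$ for each $\ep>0$, in the strengthened form where $f_\ep\in C(M)$ satisfies $f_\ep\leq 0$ on $M$ and $\max_{K_\ep}f_\ep=:-\ga_\ep<0$, together with $\psi_\ep\in\Lip(M)$ obeying $H(x,D\psi_\ep)\leq c(H)+f_\ep$ a.e. Then $L(x,v)+c(H)\geq \du{D\psi_\ep,v}_x-f_\ep(x)$, and integrating along $\xi_n$ gives
\[
\ga_\ep\,\bigl|\{s\in[0,t_n]:\xi_n(s)\in K_\ep\}\bigr|\leq \int_0^{t_n}(-f_\ep(\xi_n(s)))\,ds\leq A_n+\psi_\ep(x)-\psi_\ep(y),
\]
which is bounded by a constant $C_\ep$ independent of $n$. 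Hence, once $t_n/3>C_\ep/\ga_\ep$, the middle third $[t_n/3,2t_n/3]$ must contain some $s_n$ with $\xi_n(s_n)\in M\setminus K_\ep$, automatically forcing $s_n,t_n-s_n\to\infty$ and $\dist(\xi_n(s_n),\cA)<\ep$. Passing to subsequences, $z_n\to z_\ep$ with $\dist(z_\ep,\cA)\leq\ep$, and Lipschitz continuity of $S$ (Proposition \ref{semi-distance}) turns the action bound above into $S(x,z_\ep)+S(z_\ep,y)\leq h(x,y)$; letting $\ep\to 0$ and using that $\cA$ is closed concludes the proof.

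The delicate point, which I expect to be the main obstacle, is the global nonpositivity $f_\ep\leq 0$ used in the key estimate: the bare statement of Lemma \ref{Aubry_prop} recalled above only asserts $\max_{K_\ep}f_\ep<0$, and without control on the sign of $f_\ep$ outside $K_\ep$ the contribution of $-f_\ep$ on $\xi_n^{-1}(M\setminus K_\ep)$ could grow linearly in $t_n$ and ruin the uniform bound on $|\xi_n^{-1}(K_\ep)|$. What is actually needed is the refined strict-subsolution form (globally a subsolution, strict on $K_\ep$), which is the "more refined version" referenced from Fathi's book Proposition 8.5.3 and from \cite{Ishii2008}; with that in hand, the rest of the argument is a routine splitting and compactness exercise.
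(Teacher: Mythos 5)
You have correctly diagnosed the weak point of your own argument. As written, the proof has a real gap: the bound
\[
\ga_\ep\,\bigl|\{s\in[0,t_n]:\xi_n(s)\in K_\ep\}\bigr|\leq \int_0^{t_n}(-f_\ep(\xi_n(s)))\,ds\leq A_n+\psi_\ep(x)-\psi_\ep(y)
\]
uses $-f_\ep\geq 0$ on $M\setminus K_\ep$ in order to discard the contribution of that part of the curve, and the bare statement of Lemma~\ref{Aubry_prop} only guarantees $\max_{K_\ep}f_\ep<0$, not $f_\ep\leq 0$ globally. If $f_\ep$ were allowed to be positive on the neighborhood of $\cA$, the right side as written would not bound the left side, and the estimate on the occupation time of $K_\ep$ fails. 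The refined strict-subsolution form you invoke (a global critical subsolution, strict on $K_\ep$) is indeed available in this non-smooth setting and would repair it, but it is not the route the paper takes and is not literally contained in Lemma~\ref{Aubry_prop}.

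The paper avoids needing any sign control on $f$ off $K_\ep$ by running your estimate in the contrapositive: instead of bounding the occupation time of $K_\ep$, it assumes a near-minimizing curve $\gamma\in\Gamma^T_{x,y}$ \emph{never enters} $U=\{\dist(\cdot,\cA)<\ep\}$, so that the strict inequality $H(\gamma(s),D\psi_\gd(\gamma(s)))\leq c(H)-\gd$ holds at \emph{every} point of the curve, and integrates once to get
\[
\psi_\gd(x)-\psi_\gd(y)\leq \int_0^T [L(\gamma,\dot\gamma)+c(H)-\gd]\,ds<h(x,y)+\ep-\gd T,
\]
hence $\gd T<4C+1$. Thus a near-minimizing curve that stays outside $U$ has time bounded independently of the curve, so for $T$ large every near-minimizing curve must touch $U$. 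This gives exactly one splitting point near $\cA$, which is all that is needed: by Proposition~\ref{S_&_h_t}, $S(a,b)\leq h_\tau(a,b)$ for \emph{every} $\tau>0$, so you do not need $s_n,t_n-s_n\to\infty$ and the middle-third device is superfluous. After that the two proofs coincide: split at the visit time, use $S\leq h_\tau$, shift the split point to $\cA$ by Lipschitz continuity of $S$, and pass to the limit. So your proof is correct once the strengthened subsolution lemma is in hand, but the paper's contrapositive phrasing achieves the same conclusion from the weaker statement of Lemma~\ref{Aubry_prop} and with less machinery.
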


\bproof 
Let $\ep\in(0,\,1)$ and set  $U=\{x\in M\mid \dist(x,\cA)<\ep\}$, where $\dist(x,\cA)$ denotes the distance of  a point $x$ and the set $\cA$ induced by the metric $d$. By Lemma \ref{Aubry_prop}, 
there exist functions $\psi\in\Lip(M), \,f\in C(M)$ such that $H(x,D\psi)\leq c(H)+f$ a.e. in $M$ and $\max_{M\stm U}f<0$. 
Fix $\gd>0$ so that $\max_{M\stm U}f <-2\gd$. By approximation of $\psi$, we may select  
$\psi_\gd\in C^1(M)$ such that $H(x,D\psi_\gd)\leq c(H)+f+\gd$ in $M$, so that $H(x,D\psi_\gd)\leq c(H)-\gd$ in 
$M\stm U$. 

Let $C>0$ be a constant such that $|S(x,y)|\leq C$ for all  $x,y\in M$ 
and $|\psi_\gd(x)|\leq C$ for all $x\in M$.
By Proposition \ref{S_&_h_t} and Lemma 
\ref{h<=S+S}, we have 
\[
h(x,y)\leq 2C \ \ \text{ for all }x,y\in M. 
\]

Let $T>0$ be such that 
\[
h_T(x,y)<h(x,y)+\ep,
\]
and select  $\gamma\in\Gamma_{x,y}^T$ so that 
\[
\int_0^T [L(\gamma(s),\dot\gamma(s))+c(H)]\,ds<h(x,y)+\ep.
\]
Observe then that if $\gamma(s)\in M\stm U$ for all $s\in [0,\,T]$, then 
\[\bald
\psi_\gd(x)-\psi_\gd(y)&\,= \int_0^T\du{D\psi_\gd(\gamma(s)),\dot\gamma(s)}_{\gamma(s)}\, ds 
\\&\,\leq \int_0^T[L(\gamma(s),\dot\gamma(s))+H(\gamma(s),D\psi_\delta(\gamma(s))]\,ds
\\&\,\leq \int_0^T [L(\gamma(s), \dot\gamma(s))+c(H)-\gd ]\,ds  
<h(x,y)+\ep-\gd T,
\eald\]
and hence,
\[
\gd T<4C+1.
\]
Conversely, if $\gd T\geq 4C+1$, then such curves $\gamma$ as above must 
 intersect the set $U$. 

By the argument above, where $\ep\in(0,\,1)$ is arbitrarily chosen, we deduce that there exist 
sequences $\{t_j\}_{j\in\N}$ and $\{\ep_j\}_{j\in\N}$ of positive numbers 
and, for each $j\in\N$, a curve 
$\gamma_j\in\Gamma_{x,y}^{t_j}$ such that $\lim_{j\to\infty}\ep_j=0$,  $\lim_{j\to\infty}t_j=+\infty$, and
\[
\lim_{j\to \infty}\int_0^{t_j}L(\gamma_j(s),\dot\gamma_j(s))\,ds=h(x,y)\quad \and\quad 
\dist(\gamma_j([0,\,t_j]),\,\cA)<\ep_j,
\] 
where $\dist(A,B)$ denotes the distance of two sets $A,B$ induced by the metric $d$.
We choose $\tau_j\in (0,\,t_j)$ and $z_j\in\cA$ so that $d(\gamma_j(\tau_j),z_j)<\ep_j$ and compute by using Lemma \ref{S_&_h_t} that
\[\bald
\int_0^{t_j}L(\gamma_j(s),\dot\gamma_j(s))\,ds&\,=
\int_0^{\tau_j} L(\gamma_j(s),\dot\gamma_j(s))\,ds+\int_{\tau_j}^{t_j}L(\gamma_j(s),\dot\gamma_j(s))\,ds
\\&\,\geq S(x,\gamma_j(\tau_j))+S(\gamma_j(\tau_j),y)
\\&\,\geq S(x,z_j)+S(z_j,y)-S(\gamma_j(\tau_j),z_j)-S(z_j,\gamma_j(\tau_j))
\\&\,\geq \inf_{z\in\cA}[S(x,z)+S(z,y)]+O(\ep_j)
\eald\]
as $j\to \infty$. Thus, we have $h(x,y)\geq \inf_{z\in\cA}[S(x,z)+S(z,y)]$ and finish the proof. 
\eproof
\bproof[Proof of Theorem \ref{formula_h}] We only need to combine Lemmas \ref{h<=S+S} and \ref{h>=S+S}, to finish the proof.
\eproof 

\appendix

\section{} 
In this appendix, we give  a proof of Proposition \ref{Hopf}, which follows mostly that of  \cite[Theorem 5.1]{Ishii2011}.

We need to use a version of \cite[Lemma 5.5]{Ishii2011} stated as follows:

\begin{Lem} \label{Lemma 5.5} Let $\gf\in C^1(M\tim[0,\,\infty))$, $(\bar x,\bar t)\in M\tim(0,\infty)$,
and $\ep\in(0,\,1)$. Then, there exists an absolutely continuous curve $\gamma
\mid[0,\,\bar t]\to M$ such that $\gamma(\bar t)=\bar x$ and  
\[
L\big(\gamma(s),\dot\gamma(s)\big)+H\big(\gamma(s),D\gf(\gamma(s),s)\big)\leq \ep +\du{D\gf(\gamma(s),s),\dot\gamma(s)}_{\gamma(s)} \ \ \text{ for a.e. }s\in[0,\,\bar t]. 
\] 
Furthermore, there exists a constant $C>0$, depending only on $\|D\gf\|_{C(M\tim[0,\,\bar t])}$ and $H$, such that 
$|\dot\gamma(s)|_{\gamma(s)}\leq C$\ for a.e. $s\in[0,\,\bar t]$. 
\end{Lem}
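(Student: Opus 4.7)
The plan is to recast the pointwise inequality as a differential inclusion. For each $(x,t)\in M\tim[0,\bar t]$, I would set
$$E(x,t):=\{v\in T_xM\mid L(x,v)+H(x,D\gf(x,t))-\du{D\gf(x,t),v}_x\leq \ep\}.$$
Four properties of $E$ need to be verified. First, $E(x,t)$ is non-empty: since $p\mapsto H(x,p)$ is convex and coercive, the supremum defining $L(x,\cdot)=H(x,\cdot)^*$ is attained at some $v^*\in T_xM$ with $L(x,v^*)+H(x,D\gf(x,t))=\du{D\gf(x,t),v^*}_x$, so $v^*\in E(x,t)$. Second, $E(x,t)$ is closed and convex by lower semicontinuity and convexity of $L(x,\cdot)$. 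Third, $E$ is uniformly bounded: applying \erf{L-superlinear} with $R:=1+\|D\gf\|_{C(M\tim[0,\bar t])}$ to any $v\in E(x,t)$ gives
$$R|v|_x-\max_{|p|_x=R}H(x,p)\leq L(x,v)\leq \ep+\|D\gf\|_\infty|v|_x+\|H(\cdot,D\gf)\|_\infty,$$
whence $|v|_x\leq C$ for some $C=C(H,\|D\gf\|_\infty)$, supplying the second conclusion of the lemma. Fourth, $E$ has closed graph: if $(x_n,t_n,v_n)\to(x,t,v)$ with $v_n\in E(x_n,t_n)$, then lower semicontinuity of $L$ and continuity of $H$ and $D\gf$ give $L(x,v)\leq\liminf_n L(x_n,v_n)\leq \ep+\du{D\gf(x,t),v}_x-H(x,D\gf(x,t))$, hence $v\in E(x,t)$.

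Next, I would construct $\gamma$ by a backward Euler polygonal scheme. Partition $[0,\bar t]$ into $N$ equal subintervals with nodes $t_k=\bar t-kh$, $h=\bar t/N$; start from $\gamma_N(\bar t)=\bar x$, inductively pick any $v_k\in E(\gamma_N(t_k),t_k)$, and set $\gamma_N(s):=\exp_{\gamma_N(t_k)}((s-t_k)v_k)$ on $[t_{k+1},t_k]$. Thanks to the uniform bound $|v_k|_{\gamma_N(t_k)}\leq C$, each segment lies in a single normal coordinate chart once $h$ is small, and $\gamma_N$ is Lipschitz with a constant $C'=C'(C,g)$ independent of $N$. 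Ascoli--Arzel\`a then produces, along a subsequence, a uniform limit $\gamma\mid[0,\bar t]\to M$ which is absolutely continuous with $\gamma(\bar t)=\bar x$ and $|\dot\gamma|\leq C'$ a.e.; extracting further, $\dot\gamma_N\rightharpoonup\dot\gamma$ weakly in $L^2$ inside local charts.

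The final step is to promote the nodal inclusions $v_k\in E(\gamma_N(t_k),t_k)$ into the pointwise a.e.\ inclusion $\dot\gamma(s)\in E(\gamma(s),s)$, which is exactly the claimed inequality. I would invoke the standard closure theorem for differential inclusions (Aubin--Cellina, \emph{Differential Inclusions}, Theorem 1.4.1): the convex closed values together with the closed-graph property established in the first paragraph, combined with $\gamma_N\to\gamma$ uniformly and $\dot\gamma_N\rightharpoonup\dot\gamma$ weakly, force $\dot\gamma(s)\in E(\gamma(s),s)$ for a.e.\ $s$. The main obstacle will be executing this closure argument on a manifold rather than in Euclidean space, together with handling the merely lower semicontinuous (possibly $+\infty$-valued) Lagrangian $L$. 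The uniform speed bound $C'$ is what removes the manifold difficulty, since it keeps each polygonal piece inside one coordinate chart over a uniform time window, so the argument is carried out in finitely many charts and patched; and lower semicontinuity of $L$---rather than continuity---is precisely what the closure theorem requires, so no further regularity of $L$ is needed.
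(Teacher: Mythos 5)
Your proof is correct and takes a genuinely different route from the paper's. The paper does not argue from scratch: it defers to Lemmas 5.5--5.6 of Ishii's 2011 article, where the curve $\gamma$ arises from a solution of a Skorokhod problem, and then remarks that on a closed manifold one suppresses the reflection term and works in local charts. You instead recast the pointwise inequality directly as the differential inclusion $\dot\gamma(s)\in E(\gamma(s),s)$, check that $E$ has nonempty, convex, compact values (nonemptiness from $\partial_p H(x,D\gf)\neq\emptyset$ and the equality case of Fenchel--Young, boundedness from the superlinear estimate \eqref{L-superlinear}) and closed graph (from lower semicontinuity of $L$), and then produce a solution by a backward geodesic Euler scheme followed by the Aubin--Cellina convergence theorem. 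Morally the two routes coincide---a Skorokhod problem on a boundaryless compact manifold degenerates to exactly this inclusion---but yours is more self-contained and avoids the Skorokhod machinery, at the mild cost of patching charts, which the uniform speed bound makes routine. One step you should make explicit when writing this up: the convergence theorem needs $(\gamma_N(s),s,\dot\gamma_N(s))$ to lie within $\epsilon_N\to 0$ of $\mathrm{graph}(E)$ for a.e.\ $s$, whereas the Euler scheme hands you only the nodal inclusion $v_k\in E(\gamma_N(t_k),t_k)$. Record that in a chart $\dot\gamma_N(s)$ differs from $v_k$ by $O(h)$ and $\gamma_N(t_k)$ from $\gamma_N(s)$ by $O(h)$, so $(\gamma_N(t_k),t_k,v_k)\in\mathrm{graph}(E)$ is $O(h)$-close to $(\gamma_N(s),s,\dot\gamma_N(s))$; with $\epsilon_N=O(\bar t/N)$ this is precisely the approximate-inclusion hypothesis the convergence theorem consumes, and your conclusion then follows.
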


The proof of \cite[Lemmas 5.5 and 5.6]{Ishii2011} can be easily modified when one works in a local chart, 
with the Euclidean inner product replaced by those given by the Riemannian metric of $M$ 
and with interpretation of the curve $\gamma$ above  
as the map $s\mapsto (\gamma(\bar t-s),-\dot \gamma(\bar t-s),0)$ being a member of $\text{SP}(x)$, where $\text{SP}(x)$ is defined as the collection of solutions of the Skorokhod problem (see \cite{Ishii2011}). 
We leave to the reader to check the validity of the lemma above.

\bproof [Proof of Proposition \ref{Hopf}] Fix any $u_0\in\Lip(M)$ and let $U$ and $V$ be 
the function given by  \erf{Hopf1} and the function $u$ in Proposition \ref{exist-cp}, respectively. 

We first prove that 
\beq\label{bound_from_below}
V(x,t)\leq U(x,t) \ \ \text{ for }(x,t)\in M\tim (0,\,\infty).
\eeq
For this, fix any $\ep>0$ and, in view of Lemma \ref{approx-cp}, choose 
$V_\ep\in C^1(M\tim[0,\,\infty))$ such that 
\[
\pl_t V_\ep+H(x,DV_\ep)\leq c(H)+\ep \ \ \text{ in }M\tim(0,\infty)
\quad\and\quad V_\ep-\ep\leq V\leq V_\ep+\ep \ \ \text{ in }M\tim[0,\,\infty).
\]
Let $t>0$, $x,y\in M$, and $\gamma\in\Gamma^t_{y,x}$. Compute that
\[\bald
V_\ep(x,t)-V_\ep(y,0)&\,=V_\ep(\gamma(t),t)-V_\ep(\gamma(0),0)
\\&\,=\int_0^t[\du{DV_\ep(\gamma(s),s),\dot\gamma(s)}_{\gamma(s)} +\pl_t V_\ep(\gamma(s),s)]ds
\\&\,\leq \int_0^t [H(\gamma(s),DV_\ep(\gamma(s))+L(\gamma(s),\dot\gamma(s)) +\pl_t V_\ep(\gamma(s),s)]ds
\\&\,\leq \ep t+\int_0^t [L(\gamma(s),\dot\gamma(s))+c(H)]\,ds,
\eald\]
and hence
\[
V(x,t)\leq u_0(y)+2\ep +\ep t+\int_0^t [L(\gamma(s),\dot\gamma(s))+c(H)]\,ds. 
\]
As $\gamma\in\Gamma^t_{y,x}$ and $\ep>0$ are arbitrary, we get from the above
\[
V(x,t)\leq u_0(y)+h_t(y,x),  
\]
which implies furthermore that inequality \erf{bound_from_below} holds. 

Next, we show that there is a constant $C_0>0$ such that
\beq\label{C_0_bound}
U(x,t)\leq u_0(x)+C_0 t \ \ \text{ for }(x,t)\in M\tim(0,\,\infty). 
\eeq
To see this, we choose $C_0>0$ so that 
\[
c(H)-\min_{T^*M}H(x,p)\leq C_0,
\]
and observe by the convex duality that 
\[
L(x,0)+c(H)=-\min_{p\in T_x^*M}H(x,p)+c(H)\leq C_0 \ \ \text{ for all }\ x\in M,
\]
and, for any $(x,t)\in M\tim(0,\,\infty)$, the curve $\gamma_x(s)\equiv x$ belongs to $\Gamma^t_{x,x}$ 
and  
\[
h_t(x,x)\leq \int_0^t [L(\gamma_x(s),0)+c(H)]\,ds\leq C_0 t,
\]
which assures that
\[
U(x,t)\leq h_t(x,x)+u_0(x)\leq u_0(x)+C_0t,
\]
which is exactly inequality \erf{C_0_bound}.

To show the reverse inequality to \erf{bound_from_below}, we only need to show that the upper semicontinuous envelope $U^*$ is a viscosity subsolution of \erf{eHJ}. 
Note that, by definition, we have
\[
U^*(x,t)=\lim_{r\to 0+}\sup\{U(y,s)\mid (y,s)\in B_r(x,t)\},
\]
where $B_r(x,t)$ denotes the ball of radius $r$ with center at $(x,t)$ with respect to the distance 
induced in the Riemannian manifold $M\tim \R$.   By \erf{bound_from_below} and \erf{C_0_bound}, we get
\[
V(x,t)\leq U^*(x,t)\leq u_0(x)+C_0 t\ \ \text{ for } \ (x,t)\in M\tim[0,\,\infty),
\]
which, in particular, shows that $V(x,0)=U^*(x,0)=u_0(x)$ for $x\in M$. 
Once these observations are done, by Proposition \ref{comp-cp}, we get $U^*\leq V$ in $M\tim[0,\,\infty)$, 
which shows that $U=V$ in $M\tim(0,\,\infty)$, and the proof ends. 

Now, the only thing left for us to show is that, the upper semicontinuous envelope $U^*$ is a viscosity subsolution of \erf{eHJ}. Let $\gf\in C^1(M\tim [0,\,\infty))$ and assume that $U^*-\gf$ has a strict maximum at 
$(\hat x,\hat t)\in Q:=M\tim(0,\,\infty)$ and $(U^*-\gf)(\hat x,\hat t)=0$.

We need to prove the inequality
\beq 
\pl_t\gf(\hat x, \hat t)+H(\hat x,D\gf(\hat x,\hat t))\leq c(H).
\eeq
We argue by contradiction, and thus suppose 
\[
\pl_t\gf(\hat x, \hat t)+H(\hat x,D\gf(\hat x,\hat t))> c(H).
\]
By continuity, we may choose constants $r>0$ with $\hat t-r>0$, and $\ep\in(0,\,1)$ so that.
\beq \label{sub-ineq}
\pl_t\gf(x, t)+H(x,D\gf(x,t))>c(H)+\ep \ \ \text{ for all }(x,t)\in\ol B_r(\hat x)\tim[\hat t-r,\,\hat t+r],
\eeq
where $B_r(x)$ denotes the geodesic ball of radius $r$ and center $x$. 

Next, we apply Lemma \ref{Lemma 5.5}, in order to find an appropriate curve $\gamma$. First of all, let $C>0$ be the 
constant from Lemma \ref{Lemma 5.5} depending only on $H$ and $\|D\gf\|_{C(M\tim[0,\,\hat t+r])}$.

We choose $\rho\in (0,\,r)$ so that $4C\rho\leq r$. 
Since the maximum value of $U^*-\gf$ is zero and it is a strict maximum, we set 
\[
\gd:=-\max_{\pl(\ol B_r\tim [\hat t-\rho,\hat t+\rho])}(U^*-\gf)>0.
\]
 We may select a point $(x_0,t_0)\in \ol B_{r/2}(\hat x)\tim[\hat t-\rho/2,\,\hat t+\rho/2]$ so that 
\[
(U-\gf)(x_0,t_0)>-\gd.
\]

We invoke Lemma \ref{Lemma 5.5}, to obtain $\gamma$ with $\gamma(t_0)=x_0$ such that 
for a.e.  $s\in[0,\,t_0]$,
\beq\label{l-semi}\left\{ \bald
&H(\gamma(s), D\gf(\gamma(s),s))+L(\gamma(s),\dot\gamma(s))<\ep +\du{D\gf(\gamma(s), s),\dot\gamma(s)}_{\gamma(s)},
\\&|\dot\gamma(s)|_{\gamma(s)}\leq C.
\eald
\right.\eeq

Now, by our choice of $(x_0,t_0)$, setting $\gs=\hat t-\rho$, we have
\[
d(\gamma(s),x_0)\leq \int_s^{t_0}|\dot\gamma(\tau)|_{\gamma(\tau)}\,d\tau
\leq C(t_0-s)<2C\rho\leq \fr r 2 \ \ \text{ for } s\in[\gs,\,t_0],
\]
which shows that $\gamma(s)\in B_r(\hat x)$ for all $s\in[\gs,\,t_0]$, and also,  
\[
(U-\gf)(x_0,t_0)>\max_{\pl(\ol B_r(\hat x)\tim [\gs,\,\hat t+\rho])}(U^*-\gf)
\geq (U-\gf)(\gamma(\gs),\gs).
\]
Hence,
\[\bald
U(x_0,t_0)-U(\gamma(\gs),\gs)
&\,>\gf(x_0,t_0)-\gf(\gamma(\gs),\gs)
\\&\,=\int_\gs^{t_0}\big[\du{D\gf(\gamma(s),s),\dot\gamma(s)}_{\gamma(s)}+\pl_t\gf(\gamma(s),s)\big]\,ds,
\eald\]
and, moreover, using \erf{l-semi} and \erf{sub-ineq},
\[\bald
U(x_0,t_0)&-U(\gamma(\gs),\gs)
\\ &\,>\int_\gs^{t_0} \big[-\ep+L(\gamma(s),\dot\gamma(s))
+H(\gamma(s),D\gf(\gamma(s),s))+\pl_t\gf (\gamma(s),s)\big]\,ds
\\&\,>\int_\gs^{t_0} [L(\gamma(s),\dot\gamma(s))+c(H)]\,ds
=\int_0^{t_0-\gs} [L(\gamma(s+\gs),\dot\gamma(s+\gs))+c(H)]\,ds
\\&\,\geq h_{t_0-\gs}(\gamma(\gs),x_0).
\eald
\] 
Thus, we obtain
\[
U(x_0,t_0)>\inf_{y\in M}\big[U(y,\gs)+h_{t_0-\gs}(y,x_0)\big],
\]
which is a contradiction in view of Lemma \ref{dpp}. This completes the proof.
\eproof 

\section{}

Here we give some ideas of how to prove Propositions \ref{comp-cp} and \ref{exist-cp}. 

\bproof[Outline of proof of Proposition \ref{exist-cp}] For $R>0$, we define the function $\Gth_R\mid\R\to\R$ by $\Gth_R(r)=\min\{r,\,R\}$. Fix $R>0$, set $H_R=\Gth_R\circ (H-c(H))$, and observe that $H_R$ is bounded 
and uniformly continuous in $T^*M$. To establish the existence of a viscosity solution $u\in\Lip(M\tim[0,\,\infty))$ of \erf{eHJ}, we consider 
\beq \label{eHJR}
\pl_t u+H_R(x,Du)=0 \ \ \text{ in }M\tim(0,\infty).
\eeq
Given a function $u_0\in\Lip(M)$, it is important to obtain a viscosity solution $u^R$ of \erf{eHJR} 
satisfying $u(\cdot,0)=u_0$ whose Lipschitz constant is relatively small compared to $R$, so that 
$u^R$ is a viscosity solution of \erf{eHJ} as well. 

The existence and uniqueness (including comparison) of a viscosity solution of \erf{eHJR}, with initial condition 
$u(\cdot,0)=u_0$,  is a consequence of the classical theory of viscosity solutions. 
Let $u^R$ be such a solution and we seek for an estimate on the Lipschitz bound for $u^R$ that is 
independent of $R$. It is easy to select a constant $C_0>0$ so that 
\beq\label{C_0}
|H(x,p)-c(H)| \leq C_0 \ \ \text{ for all }\ x\in M,\ p\in B_\gk,
\eeq
where $\gk=\Lip(u_0)$ and $B_\gk$ denotes the ball $\subset T_x^*M$ 
of radius $\gk$ with center at the origin. It is then easy to see that the functions 
$(x,t)\mapsto u_0(x)-C_0t$ and $(x,t)\mapsto u_0(x)+C_0t$ are viscosity sub- and super-solutions of \erf{eHJR}, respectively. By the comparison principle, we get 
\beq\label{est-uR}
u_0(x)-C_0t\leq u^R(x,t)\leq u_0(x)+C_0t.
\eeq
Fix any $\tau>0$ and consider the function $v\mid (x,t)\mapsto u^R(x,t+\tau)$ in $M\tim[0,\,\infty)$, 
which is a viscosity solution of \erf{eHJR}.  
By \erf{est-uR}, we have\ $|v(x,0)-u^R(x,0)|\leq C_0\tau$ for $x\in M$. By the comparison principle
applied to the functions $u^R(x,t+\tau)$ and $u^R(x,t)\pm C_0\tau$,  
we see that $|v(x,t)-u^R(x,t)|\leq C_0\tau$ for all $(x,t)\in M\tim[0,\,\infty)$ and, hence, the function 
$u^R$ is Lipschitz continuous in $t$, with Lipschitz bound less or equal to $C_0$, where $C_0$ is independent of choice of $R$. Furthermore, for each $t>0$, 
$x\mapsto u^R(x,t)$ is a viscosity subsolution of $H_R(x,Du)=C_0$ in $M$. If $R>C_0$, then, by the definition of $\Gth_R$, this implies that $x\mapsto u^R(x,t)$ is a viscosity subsolution of $H(x,Du)=c(H)$ in $M$ 
for any $t>0$, which, together with the coercivity assumption (H3),
yields a Lipschitz bound, independent of $R$, of $u^R(x,t)$ as functions of $x$, uniform in $t$.  
Moreover, we deduce that, if $R>C_0$, then $u^R$ is a viscosity solution of \erf{eHJ} as well.  

To conclude, we select $C_0>0$ so that \erf{C_0} is satisfied, fix an $R>C_0$, and solve \erf{eHJR}. 
Then the solution $u^R$ is a viscosity solution of \erf{eHJ} and, moreover, it is Lipschitz 
continuous in $M\tim[0\,\,\infty)$. 
\eproof

\bproof[Outline of proof of Proposition \ref{comp-cp}] We need only to consider the 
case when $T<\infty$. 
We follow the proof of Theorem \ref{The_comp_prin} with minor modifications. 
It is enough to show that $v\leq w+\ep$ in $M\tim(0,\,T)$ for all $\ep>0$. Thus, we may assume 
by adding a positive constant to $w$ that $v(x,0)<w(x,0)$ for $x\in M$. It is then possible to select 
a function $u_0\in\Lip(M)$ so that $v(x,0)\leq u_0(x)\leq w(x,0)$ for $x\in M$. By Proposition \ref{exist-cp} there exists a viscosity solution $u\in \Lip(M\tim[0,\,\infty))$ of \erf{eHJ} satisfying the initial condition $u(\cdot,0)=u_0$. We need to prove that $v\leq u\leq w$ in $M\tim[0,\,T)$. 

The next step is to show that $v\leq u$ in $M\tim(0,\,T)$. The argument for proving 
the inequality $u\leq w$ is similar and we skip it here. 
We argue by contradiction and thus suppose that $\sup_{M\tim[0,\,T)}(v-u)>0$.   
We choose an $S\in(0,\,T)$, so close to $T$, that $\sup_{M\tim[0,\,S]}(v-u)>0$.  
Note that $v$ is bounded above in $M\tim [0,\,S]$ since $v$ is real-valued 
and upper semicontinuous in $M\tim[0,\,T)$. For $\gd>0$ we consider the function 
$\Psi_\gd\mid (x,t)\mapsto v(x,t)-u(x,t)-\gd(T-t)^{-1}$, which attains a maximum at a point $(x_\gd,t_\gd)
\in M\tim[0,\,S)$. Observe that, if $\gd>0$ is small enough, then the maximum value of $\Psi_\gd$ is positive,  and that the function $v_\gd\mid (x,t)\mapsto v(x,t)-\gd(T-t)^{-1}$ is a viscosity 
subsolution of $\pl_t v_\gd+H(x, Dv_\gd)=c(H)-\gd T^{-2}$ in $M\tim(0,\,S)$. We fix such a small $\gd$ 
in what follows.  Note that, since $v(x,0)\leq u(x,0)$, we have $t_\gd>0$. 
We fix a $\gf\in C^1(M\tim[0,\,S])$ so that $\gf(x_\gd,t_\gd)=0$, $D\gf(x_\gd,t_\gd)=\partial_t\gf(x_\gd,t_\gd)=0$, 
and $\gf(x,t)>0$ for all $(x,t)\not=(x_\gd,t_\gd)$. The function $\Psi_\gd-\gf$ achieves a strict maximum 
at $(x_\gd,t_\gd)$. 

We are now ready to apply the argument of doubling variables. Passing to local coordinates   
around $x_\gd$, we may assume that $x_\gd\in D$ for some open subset  $D$ of $\R^n$
such that  $\ol D\subset M$. We choose $\rho>0$ so that $[t_\gd-\rho, t_\gd+\rho] \subset (0,\,S)$. 
For $\ga>0$ we consider the function 
\[
\Phi_\ga(x,t,y,s)=(v_\gd-\gf)(x,t) -u(y,s)-\ga(|x-y|^2+(t-s)^2) 
\]
in $\ol D\tim [t_\gd-\rho,\,t_\gd+\rho] \tim \ol D\tim[t_0-\rho,\,t_\gd+\rho]$. 
Since $\Phi_\ga$ is upper semicontinuous, $\Phi_\ga$ achieves a maximum at a point $(x_\ga,t_\ga,y_\ga,s_\ga)$. Since $u$ is Lipschitz continuous, the inequality $\Phi_\ga(x_\ga.t_\ga,y_\ga,s_\ga)$ $\geq \Phi_\ga(x_\ga,t_\ga,x_\ga,t_\ga)$ yields 
\[
\ga(|x_\ga-y_\ga|^2+(t_\ga-s_\ga)^2)\leq \Lip(u)(|x_\ga-y_\ga|^2+|t_\ga-s_\ga|^2)^{1/2}, 
\]
which implies 
\[
\ga(|x_\ga-y_\ga|^2+|t_\ga-s_\ga|^2)^{1/2}\leq \Lip(u). 
\]
This shows that the collections $\{\ga(x_\ga-y_\ga)\}_{\ga>0}\subset \R^n$ and 
$\{\ga(t_\ga-s_\ga)\}_{\ga>0}\subset \R$ are bounded, and, in particular, for some 
sequence $\ga_j\to+\infty$, 
the sequence $\{(x_{\ga_j},t_{\ga_j},\ga_j(x_{\ga_j}-y_{\ga_j}),\ga_j(t_{\ga_j}-s_{\ga_j}))\}_{j\in\N}
\subset \R^n\tim\R\tim\R^n\tim\R$ is convergent. Set
\[
x_0=\lim_{j\to\infty} x_{\ga_j},\quad t_0=\lim_{j\to\infty} t_{\ga_j},\quad
p_0=\lim_{j\to\infty}2\ga_j(x_{\ga_j}-y_{\ga_j}),\quad q_0=\lim_{j\to\infty} 2\ga_j(t_{\ga_j}-s_{\ga_j}), 
\] 
and note that 
\[
\lim_{j\to\infty}y_{\ga_j}=x_0 \quad\and \quad\lim_{j\to\infty}s_{\ga_j}=t_0. 
\]
From the inequality
\[\bald
(\Psi_\gd-\gf)(x_\gd,t_\gd)&\,\leq\max_{(x,t)\in\ol D\tim[t_\gd-\rho,t_\gd+\rho]}\Phi_\ga(x,t,x,t)
= \Phi_\ga(x_\ga,t_\ga,y_\ga,s_\ga)
\\&\,\leq (v_\gd-\gf)(x_\ga,t_\ga)-u(y_\ga,s_\ga),
\eald\]
using the upper semicontinuity of $v$, we obtain 
\[
(\Psi_\gd-\gf)(x_\gd,t_\gd)\leq (v-\gf)(x_0,t_0)-u(x_0,t_0)=(\Psi_\gd-\gf)(x_0,t_0),
\] 
which ensures, since $(x_\gd,t_\gd)$ is a strict maximum point of $\Psi_\gd-\gf$, that 
$(x_0,t_0)=(x_\gd,t_\gd)$. Thus, for sufficiently large $j$, we have $x_{\ga_j},y_{\ga_j}\in D$ 
and $t_{\ga_j}, s_{\ga_j}\in(t_\gd-\rho,t_\gd+\rho)$. For such $j$, by the viscosity properties of $v$ and $u$, 
we have 
\[
2\ga_j(t_{\ga_j}-s_{\ga_j})+H(x_{\ga_j},D\gf(x_{\ga_j},t_{\ga_j})+2\ga(x_{\ga_j}-y_{\ga_j}))\leq c(H)-\gd T^{-2},
\]
and 
\[
2\ga_j(t_{\ga_j}-s_{\ga_j})+H(y_{\ga_j},2\ga(x_{\ga_j}-y_{\ga_j}))\geq c(H).
\]
Moreover, in the limit as $j\to\infty$, we obtain
\[
q_0+H(x_0,p_0)\leq c(H)-\gd T^{-2}\quad\and\quad q_0+H(x_0,p_0)\geq c(H).
\]
These yield a contradiction. 
\eproof

\bibliographystyle{abbrv}
\bibliography{mybib}

\end{document}